    \def\atSign{@@}
    \def\mathbb{\Bbb}
    \def\mathfrak{\frak}
    \def\mathbf{\bold}
      \def\boldsymbol#1{{\bold #1}}
    \def\mathbit{\boldsymbol}
    \newenvironment{proof}{%
         \@ifnextchar[{%
                       \expandafter\let\expandafter\end@proof
                         \csname endpf*\endcsname
                         \my@proof
                      }{\let\end@proof\endpf\pf}%
        }{\end@proof}
    \def\my@proof[#1]{\@nameuse{pf*}{#1}}
    \def\xrightarrow[#1]#2{@>{#2}>{#1}>}
    \def\xleftarrow[#1]#2{@<{#2}<{#1}<}
    \def\providecommand#1{\def#1}
    \def\emph#1{{\em #1}}
    \def\textbf#1{{\bf #1}}
    \def\mathring{\overset{\,\,{}_\circ}}% For slanted letters only, sub too high
        \DeclareMathAccent{\mathring}{\mathalpha}{operators}{"17}
      \long\def\FAKEendPROOF{\endtrivlist}
          \def\endproof{\qed\endtrivlist}
        \DeclareMathAlphabet{\mathbit}{OML}{cmm}{b}{it}
      \def\atSign{@}
      \def\Sb#1\endSb{_{\substack{#1}}}
      \def\Sp#1\endSp{^{\substack{#1}}}
                \def\mathcal{\cal}
                \def\pcyr{%
                        \def\default@family{UWCyr}%
                        \let\oldSl@\sl
                        \def\sl{\def\default@shape{it}\oldSl@}%
                        \cyracc
                        \language\Russian\family{UWCyr}\selectfont
                }
                \DeclareFontFamily{OT2}{cmr}{\hyphenchar\font45 }
                \DeclareFontShape{OT2}{cmr}{m}{n}{%
                     <5><6><7><8><9><10>gen*wncyr %
                     <10.95><12><14.4><17.28><20.74><24.88> wncyr10 %
                }{}
                \DeclareFontShape{OT2}{cmr}{m}{it}{%
                     <5><6><7><8><9><10> gen * wncyi%
                     <10.95><12><14.4><17.28><20.74><24.88> wncyi10%
                }{}
                \DeclareFontShape{OT2}{cmr}{bx}{n}{%
                     <5><6><7><8><9><10> gen * wncyb%
                     <10.95><12><14.4><17.28><20.74><24.88> wncyb10%
                }{}
                \DeclareFontShape{OT2}{cmr}{m}{sl}{%
                     <-> ssub * cmr/m/it%
                }{}
                \DeclareFontShape{OT2}{cmr}{m}{sc}{%
                     <5><6><7><8><9><10>%
                     <10.95><12><14.4><17.28><20.74><24.88> wncysc10%
                }{}
                \DeclareFontFamily{OT2}{cmss}{\hyphenchar\font45 }
                \DeclareFontShape{OT2}{cmss}{m}{n}{%
                     <8><9><10> gen * wncyss%
                     <10.95><12><14.4><17.28><20.74><24.88> wncyss10%
                }{}
                \def\cyrencodingdefault{OT2}
                \def\pcyr{%
                        \cyracc
                        \let\encodingdefault\cyrencodingdefault
                        \language\Russian\fontencoding{OT2}\selectfont
                }
        \def\theorembodyfont#1{\relax}
          \let\@@th@plain\th@plain
          \def\th@plain{ \@@th@plain \slshape }
        \let\normalshape\relax
     \def\cprime{$'$}
  \def\@sect@my#1#2#3#4#5#6[#7]#8{%
\ifnum #2>\c@secnumdepth
   \let\@svsec\@empty
 \else
   \refstepcounter{#1}%
\edef\@svsec{\ifnum#2<\@m
             \@ifundefined{#1name}{}{\csname #1name\endcsname\ }\fi
\noexpand\rom{\csname the#1\endcsname.}\enspace}\fi
 \@tempskipa #5\relax
 \ifdim \@tempskipa>\z@ % then this is not a run-in section heading
   \begingroup #6\relax
   \@hangfrom{\hskip #3\relax\@svsec}{\interlinepenalty\@M #8\par}%
   \endgroup
   \if@article\else\csname #1mark\endcsname{%
        \ifnum \c@secnumdepth >#2\relax\csname the#1\endcsname. \fi#7}\fi
\ifnum#2>\@m \else
       \let\@tempf\\ \def\\{\protect\\}\addcontentsline{toc}{#1}%
{\ifnum #2>\c@secnumdepth \else
             \protect\numberline{%
               \ifnum#2<\@m
               \@ifundefined{#1name}{}{\csname #1name\endcsname\ }\fi
               \csname the#1\endcsname.}\fi
           #8}\let\\\@tempf
     \fi
 \else
  \def\@svsechd{#6\hskip #3\@svsec
    \@ifnotempty{#8}{\ignorespaces#8\unskip
       \ifnum\spacefactor<1001.\fi}%
        \ifnum#2>\@m \else
          \let\@tempf\\ \def\\{\protect\\}\addcontentsline{toc}{#1}%
            {\ifnum #2>\c@secnumdepth \else
              \protect\numberline{%
                \ifnum#2<\@m
                \@ifundefined{#1name}{}{\csname #1name\endcsname\ }\fi
                \csname the#1\endcsname.}\fi
             #8}\let\\\@tempf\fi}%
 \fi
\@xsect{#5}}
  \let\@sect\@sect@my             % Cannot just comment the above
  \def\th@remark@my{\theorempreskipamount6\p@\@plus6\p@
    \theorempostskipamount\theorempreskipamount
    \def\theorem@headerfont{\it}\normalshape}
    \let\th@remark\th@remark@my
    \let\o@@remark\th@remark
      \def\th@remark{\o@@remark
        \ifdim\theorempostskipamount < 2pt\relax
          \theorempostskipamount\theorempreskipamount
          %\ifdim\theorempostskipamount < 4pt\else
             \multiply\theorempostskipamount\tw@
             \divide\theorempostskipamount\thr@@
          %\fi
        \fi
      }
\let\myLabel\@gobble
\def\labelsONmargin{\@mparswitchfalse\def\myLabel##1{\@bsphack\marginpar
                                  {\normalshape\tiny\rm Label ##1}\@esphack}}
  \def\url#1{{\tt #1}}%
\def\cyracc{\def\u##1{%\if \i##1\accent"24 i%
                \if \i##1\char"1A%
                \else \if I##1\char"12%
                \else \accent"24 ##1\fi\fi }%
\def\"##1{\if e##1{\char"1B}%
                \else \if E##1{\char"13}%
                \else \accent"7F ##1\fi\fi }%
\def\9##1{\if##1z\char"19
\else\if##1Z\char"11
\else\if##1E\char"03
\else\if##1e\char"0B
\else\if##1u\char"18
\else\if##1U\char"10
\else\if##1A\char"17
\else\if##1a\char"1F
\else\if##1p\char"7E
\else\if##1P\char"5E
\else\if##1Q\char"5F
\else\if##1q\char"7F
\else\if##1i\char"1A
\else\if##1I\char"12
\else\if##1N\char"7D
\fi
\fi
\fi
\fi
\fi
\fi
\fi
\fi
\fi
\fi
\fi
\fi
\fi
\fi
\fi
}%
\def\cydot{{\kern0pt}}}%
\def\cydot{$\cdot$}
        \def\Russian{0\relax
    \message{Don't know the hyphenation rules for Russian^^J
                        Please do INITeX with `input  russhyph' in the
                        command line}%
                \gdef\Russian{0\relax}%
        }
  \def\@putname#1#2#3#4{\def\@@ref{#3}\let\old@bf\bf
        \def\bf##1{\old@bf\if?\noexpand##1?{#4}\else##1\fi}%
        #1{#2}%
        \let\bf\old@bf}
  \def\@putname#1#2#3#4{\def\@@ref{#3}\let\old@bf\bf    % for 209???
        \let\old@reset@font\reset@font                  % for 2e
        \def\bf##1{\old@bf\if?\noexpand##1?{#4}\else##1\fi}%
        \def\reset@font##1##2{\old@reset@font##1\if?\noexpand##2?{#4}\else##2\fi}#1{#2}%
        \let\bf\old@bf\let\reset@font\old@reset@font}
\let\my@ref=\ref
\def\ref#1{\@putname\my@ref{#1}{#1}{\tiny\rm\@@ref}}
\let\my@pageref=\pageref
\def\pageref#1{\@putname\my@pageref{#1}{#1}{\tiny\rm\@@ref}}
\let\my@cite=\cite
\def\cite#1{\@putname\my@cite{#1}{\@citeb}{\tiny\rm\@@ref}}
\theoremstyle{plain} % for references in unnumbered theorems
\numberwithin{equation}{section}
\theoremstyle{definition}
\newtheorem{definition}{Definition}[section]
\theoremstyle{remark}
\newtheorem{remark}[definition]{Remark} %\renewcommand{\theremark}{}
\theoremstyle{plain} % for future references
\newtheorem{theorem}[definition]{Theorem}
\newtheorem{lemma}[definition]{Lemma}
\newtheorem{corollary}[definition]{Corollary}
\begin{document}
\bibliographystyle{amsplain}
\relax

\def\AW[#1]^#2_#3{\ar@{-^>}@<.5ex>[#1]^{#2} \ar@{_<-}@<-.5ex>[#1]_{#3}}
 \def\NAW[#1]{\ar@{-^>}@<.5ex>[#1] \ar@{_<-}@<-.5ex>[#1]}

\title{ Classification of finite-growth \\
general Kac-Moody superalgebras }

\author{Crystal~Hoyt and Vera~Serganova} 

\date{ October 10, 2006 }

\let\thefootnote\relax

\footnotetext{Department of Mathematics, University of California, Berkeley, CA 94720, USA}
\footnotetext{\emph{Email:} crystal\atSign{}math.berkeley.edu, serganov\atSign{}math.berkeley.edu}
 
\maketitle

\begin{abstract} A contragredient Lie superalgebra is a superalgebra
  defined by a Cartan matrix. A contragredient Lie superalgebra has
  finite-growth if the dimensions of the graded components (in the natural
  grading)  depend polynomially on the degree. In this paper we
  classify finite-growth contragredient Lie superalgebras.
  Previously,
  such a classification was known only for the symmetrizable case.
\end{abstract}

\section{Introduction}

 Affine superalgebras have many interesting applications in
combinatorics, number theory and physics (see [3,6,7,8]).
Finite-growth contragredient Lie superalgebras without zeros on
the diagonal were classified by V.G. Kac in [3]. Finite-growth
symmetrizable were classified by J.W. van de Leur in [10,11]. In
the present paper we give a complete classification of
finite-growth contragredient Lie superalgebras without imposing
any conditions on the Cartan matrices. This list includes examples
previously known see [4,12] and some new superalgebras, however
these new superalgebras are not simple.

This also completes the classification of finite-growth Kac-Moody
superalgebras, where we find that there are only two
non-symmetrizable families, $q(n)^{(2)}$ and $S(1,2,a)$.  In
contrast to the Kac-Moody algebra situation, $S(1,2,a)$ is not a
central extension of a loop algebra.  However, it appears in the
list of conformal superalgebras [4].

The main result is as follows.

\begin{theorem} Let $\mathfrak{ g} (A)$ be a contragredient Lie
  superalgebra of finite growth and suppose the matrix $A$ is indecomposable
with no zero rows.  Then either $A$ is symmetrizable and
$\mathfrak{g}(A)$ is isomorphic to an affine or finite
dimensional Lie superalgebra classified in [10,11], or it is
listed in Table 8.1.

\end{theorem}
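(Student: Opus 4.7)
The plan is to reduce the problem to a finite combinatorial check on low-rank sub-Cartan matrices, exploiting odd reflections to move between different Cartan matrices of the same contragredient superalgebra. Since the symmetrizable finite-growth cases are handled in [10,11], I focus on the non-symmetrizable case and aim to show that only the two families appearing in Table 8.1 can arise.

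The first step is a reduction to low rank. Any principal submatrix $A'$ of $A$ yields a contragredient subsuperalgebra $\mathfrak{g}(A')\subset\mathfrak{g}(A)$, and if $\mathfrak{g}(A)$ has finite growth then $\mathfrak{g}(A')$ inherits strong structural constraints; in particular, each rank-$2$ principal submatrix must belong to the short known list of finite or affine rank-$2$ contragredient superalgebras. This yields a catalog of permissible ``edges'' in the generalized Dynkin diagram of $A$. A parallel analysis of rank-$3$ submatrices then rules out many local configurations (in particular most triangles).

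The second step is to use odd reflections. Whenever a simple root $\alpha_i$ is isotropic odd (that is, $a_{ii}=0$ with some $a_{ij}\neq 0$), the associated odd reflection transforms $A$ into another Cartan matrix $A'$ with $\mathfrak{g}(A)\cong\mathfrak{g}(A')$; this gives me freedom to choose a convenient base. The key observation is that non-symmetrizability of $A$ is witnessed by a cycle in the Dynkin diagram whose edge weights fail the symmetrizability relation; combined with the rank-$2$ catalog and with the restriction that no row of $A$ is zero, every edge in such a cycle must be isotropic odd, since only isotropic odd edges admit the continuous deformation of weights compatible with a non-symmetrizable cycle.

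The third step is exhaustive elimination. Combining the local rank-$2$/rank-$3$ restrictions with the cycle condition, I enumerate all connected Dynkin diagrams of the allowed shapes and show that all but a few fail the finite-growth condition, typically by exhibiting an infinite sequence of linearly independent real roots lying in a single principal grading component. The surviving diagrams are precisely those of $q(n)^{(2)}$ and of the one-parameter family $S(1,2,a)$. The main obstacle is this last step: in the absence of an invariant bilinear form one cannot invoke the Kac--Weyl machinery directly, so finite growth has to be checked or refuted by constructing explicit root strings and comparing them with the principal grading; verifying that the listed candidates \emph{do} have finite growth (and really arise from a Cartan matrix satisfying our hypotheses) is also nontrivial, and will rely for $S(1,2,a)$ on its realization inside a conformal superalgebra as in [4], and for $q(n)^{(2)}$ on an explicit loop-type presentation.
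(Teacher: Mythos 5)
There are genuine gaps here, and the most serious one is structural. Your reduction to a ``rank-2 catalog'' and your cycle-based picture of non-symmetrizability both miss the phenomenon that actually produces most of the entries of Table 8.1: \emph{singular} simple roots, i.e.\ isotropic $\alpha_i$ with $a_{ij}=0$ but $a_{ji}\neq 0$ for some $j$. Such a matrix is automatically non-symmetrizable with no cycle present at all, and this is precisely how $D(2,1;0)$ (a three-vertex path), $\widehat{D}(2,1;0)$ and $S(1,2,0)$ arise; the paper devotes its entire Section 7 to hunting these down by induction on the number of vertices. Your ``key observation'' that non-symmetrizability is witnessed by a cycle all of whose edges are isotropic odd is also false for the surviving examples themselves: $q(n)^{(2)}$ may contain a single $\bigotimes$ vertex, so most edges of its cycle join non-isotropic even vertices. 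An enumeration built on your step 2 would therefore both miss several algebras that belong in the answer and misclassify the shape of the ones it does find.

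Two further ingredients you would need are absent. First, the paper's rank-2 and rank-3 constraints do not come from quoting a known classification of low-rank finite-growth superalgebras; they come from first proving that finite growth forces \emph{integrability} (all $\operatorname{ad}_{X_i}$ locally nilpotent), which yields the integrality and sign conditions on the entries of $A$, and then -- crucially -- demanding that these conditions survive \emph{every} sequence of odd reflections, singular or regular. This reflection-stability is the finiteness-generating constraint; without it the ``catalog of permissible edges'' you invoke does not exist, since a matrix can look harmless in one base and violate the conditions after one reflection. Second, your elimination engine is underpowered: producing infinitely many independent roots ``in a single principal grading component'' is not how these algebras fail to have finite growth (root spaces stay finite-dimensional; it is the number of roots of height $n$ that grows exponentially), and the paper instead eliminates candidates by passing to the Lie algebra generated by the even principal roots, whose Cartan matrix must then be an even generalized Cartan matrix of finite or affine type -- a check that reduces to the classical Kac classification rather than to ad hoc root-string constructions. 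You would need to supply some equally effective criterion to make step 3 go through.
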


We also handle the case where the matrix $A$ has a zero row. In
this case, the algebra $\mathfrak{g}(A)$ is not simple and
basically is obtained by extending a finite dimensional algebra by
a Heisenberg algebra.

The main idea of the classification is to use odd reflections (see
[9]). Odd reflections are used to generalize the Weyl group for
superalgebras, allowing one to move from one base to another.  A Lie
superalgebra usually has more than one Cartan matrix, unlike the Lie
algebra case where the Cartan matrix is unique.

Our proof goes in the following way. We show that in a
finite-growth superalgebra, simple roots act locally nilpotently.
This implies certain conditions on a Cartan matrix (see Lemma
3.1). The crucial point of our classification is that these
conditions should still hold after we apply an odd reflection.
This gives rather strong conditions on a matrix, which allows us
to list them all. These conditions are only slightly weaker than
the Kac-Moody conditions.

The first author classifies in [1] all matrices with a zero on the
main diagonal satisfying the condition that the matrix and all its
odd reflections are generalized Cartan matrices (see conditions in
definition ~\ref{def1}). Remarkably, the superalgebras defined by
such matrices almost always have finite-growth.  The exception is a
certain family of matrices of size 3, which is described in [1]. By
comparing the classification in [1] to the finite-growth
symmetrizable classification in [10,11] we obtain the following

\begin{theorem} Let $A$ be a symmetrizable generalized Cartan matrix with
a zero on the main diagonal. Suppose that any matrix $A'$ obtained
  from $A$ by a sequence of odd reflections is again a generalized Cartan matrix. Then,
  $\mathfrak {g} (A)$ has finite growth.
\end{theorem}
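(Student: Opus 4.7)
The plan is to prove Theorem~1.3 by combining two existing classifications rather than building $\mathfrak{g}(A)$ directly. On one side, the first author's paper [1] classifies all matrices $A$ with a zero on the diagonal such that $A$ and every matrix obtained from $A$ by a sequence of odd reflections is a generalized Cartan matrix; the output is an explicit (short) list of families plus one exceptional family of $3\times 3$ matrices. On the other side, van de Leur's papers [10,11] give an explicit list of all finite-growth symmetrizable contragredient Lie superalgebras. The strategy is to restrict the list of [1] to its symmetrizable members and match each one against van de Leur's list.

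Concretely, first I would go through the families in [1] one by one and, assuming symmetrizability, identify a diagonal $D$ with $DA$ symmetric. For each such symmetrizable $A$, I would recognize $\mathfrak{g}(A)$ as one of the affine superalgebras appearing in van de Leur's tables, possibly after applying odd reflections to pass from the Cartan matrix one writes down naively to the Cartan matrix used in [10,11]. This step must be done modulo the equivalence generated by odd reflections, since a single superalgebra generally admits several non-conjugate bases and hence several Cartan matrices; the classification in [1] is stable under this equivalence, so the matching is well-posed. Once the matching is complete, finite growth is immediate because the algebra on the right-hand side is already known to have finite growth.

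The only delicate case is the exceptional family of $3\times 3$ matrices singled out in [1], for which the corresponding superalgebra does \emph{not} have finite growth. The hard part of the argument is to show that this exceptional family cannot occur under the hypotheses of the theorem, and the natural way to achieve this is to verify that no matrix in the family is symmetrizable: one writes the symmetrizability condition $d_i a_{ij} = d_j a_{ji}$ explicitly for the three rows of the family, uses the defining relations of the family (which pin down the entries up to a parameter) and derives a contradiction, so the family is excluded by the symmetrizability hypothesis on $A$.

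The main obstacle is therefore not a single deep step but a careful bookkeeping: one must correctly translate between the normalizations used in [1] and in [10,11], and one must recognize when two apparently different Cartan matrices define isomorphic superalgebras via odd reflections. Once this dictionary is set up and the exceptional family is eliminated by non-symmetrizability, the theorem follows from a finite case check against van de Leur's list.
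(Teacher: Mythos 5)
Your proposal follows essentially the same route as the paper: the paper's entire argument for this theorem is to invoke the classification from [1] of matrices whose every odd reflection is a generalized Cartan matrix, compare it with van de Leur's finite-growth symmetrizable classification [10,11], and note that the sole exceptional (non-finite-growth) $3\times 3$ family is excluded in the symmetrizable setting (cf.\ the first lemma of Section 6, quoted from [1]). Your elaboration of the matching and of eliminating the exceptional family via non-symmetrizability is exactly the intended bookkeeping.
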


Most of the calculations for our classification are in [1].

\section{Definitions }

Let $ I=\left\{1,\dots ,n\right\} $, $ p:I \to {\mathbb Z}_{2} $
and $ A=\left(a_{ij}\right) $ be a matrix. Fix a vector space $
{\mathfrak h} $ of dimension $ n+\operatorname{cork}\left(A\right)
$, linearly independent $ \alpha_{1},\dots
,\alpha_{n}\in{\mathfrak h}^{*} $ and $ h_{1},\dots
,h_{n}\in{\mathfrak h} $ such that $
\alpha_{j}\left(h_{i}\right)=a_{ij} $, define a Lie superalgebra $
\bar{{\mathfrak g}}\left(A\right) $ by generators $ X_{1},\dots
,X_{n},Y_{1},\dots ,Y_{n} $ and $ {\mathfrak h} $ with relations
\begin{equation}
\left[h,X_{i}\right]=\alpha_{i}\left(h\right)X_{i}\text{, }\left[h,Y_{i}\right]=-\alpha_{i}\left(h\right)Y_{i}\text{, }\left[X_{i},Y_{j}\right]=\delta_{ij}h_{i}.
\label{equ1}\end{equation}\myLabel{equ1,}\relax
Here we assume that all elements of $ {\mathfrak h} $ are even and $ p\left(X_{i}\right)=p\left(Y_{i}\right)=p\left(i\right) $.

By $ {\mathfrak g}\left(A\right) $ (or $ {\mathfrak g} $ when the Cartan matrix is fixed) denote the quotient
of $ \bar{{\mathfrak g}}\left(A\right) $ by the unique maximal ideal which intersects $ {\mathfrak h} $ trivially. It is
clear that if a matrix $ B=DA $ for some invertible diagonal $ D $ then
$ {\mathfrak g}\left(B\right)\cong{\mathfrak g}\left(A\right) $. Indeed an isomorphism can be obtained by mapping $ h_{i} $ to $ d_{i i}h_{i} $.
Therefore without loss of generality we may assume that $ a_{i i}=2 $ or 0.

The Lie superalgebra $ {\mathfrak g}={\mathfrak g}\left(A\right) $ has a {\em root\/} decomposition
\begin{equation}
{\mathfrak g}={\mathfrak h}\oplus\oplus_{\alpha\in\Delta}{\mathfrak g}_{\alpha}.
\notag\end{equation}
Clearly, one can define $ p:\Delta \to {\mathbb Z}_{2} $ by putting $ p\left(\alpha\right)=0 $ or 1 whenever $ {\mathfrak g}_{\alpha} $ is
even or odd, respectively. By $ \Delta_{0}\left(\Delta_{1}\right) $ we denote the set of even (odd) roots.
Every root
is a positive or negative linear combination of $ \alpha_{1},\dots ,\alpha_{n} $. According to
this we call a root positive or negative and have the decomposition
$ \Delta=\Delta^{+}\cup\Delta^{-} $. The roots $ \alpha_{1},\dots ,\alpha_{n} $ are called {\em simple roots}. Sometimes
instead of $ a_{ij} $ we will write $ a_{\alpha\beta} $ where $ \alpha=\alpha_{i} $, $ \beta=\alpha_{j} $.

One sees easily that there are the following possibilities for each
simple root $ \alpha=\alpha_{i}: $
\begin{enumerate}
\item if $ a_{\alpha\alpha}=2 $ and $ p\left(\alpha\right)=0 $,
then $ X_{\alpha} $, $ Y_{\alpha} $ and $ h_{\alpha} $ generate
the subalgebra isomorphic to $ sl\left(2\right) $;

\item if $ a_{\alpha\alpha}=0 $ and $ p\left(\alpha\right)=0 $,
then $ X_{\alpha} $, $ Y_{\alpha} $ and $ h_{\alpha} $ generate
the subalgebra isomorphic to the Heisenberg algebra;

\item if $ a_{\alpha\alpha}=2 $ and $ p\left(\alpha\right)=1 $,
then $
\left[X_{\alpha},X_{\alpha}\right]=\left[Y_{\alpha},Y_{\alpha}\right]\not=0
$ and $ X_{\alpha} $, $ Y_{\alpha} $ and $ h_{\alpha} $ generate
the subalgebra isomorphic to $ \operatorname{osp}\left(1|2\right)
$, in this case $ 2\alpha\in\Delta $;

\item if $ a_{\alpha\alpha}=0 $ and $ p\left(\alpha\right)=1 $,
then $
\left[X_{\alpha},X_{\alpha}\right]=\left[Y_{\alpha},Y_{\alpha}\right]=0
$ and $ X_{\alpha} $, $ Y_{\alpha} $ and $ h_{\alpha} $ generate
the subalgebra isomorphic to $ sl\left(1|1\right) $.
\end{enumerate}

In the last case we say that $ \alpha $ is {\em isotropic\/} and
by definition any isotropic root is odd. In the other cases a root
is called {\em non-isotropic}. A simple root $ \alpha $ is {\em
regular\/} if for any other simple root $ \beta $, $
a_{\alpha\beta}=0 $ implies $ a_{\beta\alpha}=0 $. Otherwise a
simple root is called {\em singular}.

\begin{lemma} \label{lm20}\myLabel{lm20}\relax  For any subset $ J\subset I $ the subalgebra $ {\mathfrak l} $ in $ {\mathfrak g}\left(A\right) $ generated by
$ {\mathfrak h} $, $ X_{i} $ and $ Y_{i} $, where $ i\in J $, is
isomorphic to $ {\mathfrak h}'\oplus{\mathfrak
g}\left(A_{J}\right) $, where $ A_{J} $ is the submatrix of $ A $
with coefficients $ \left(a_{ij}\right)_{i,j\in J} $ and $
{\mathfrak h}' $ is a subspace of $ {\mathfrak h} $. More
precisely $ {\mathfrak h}' $ is the maximal subspace in $
\cap_{i\in J} \operatorname{Ker} \alpha_{i} $ which trivially
intersects the span of $ h_{i} $, $ i\in J $.

\end{lemma}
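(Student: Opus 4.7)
The plan is to realize $\mathfrak{l}$ as the image of a natural homomorphism from $\bar{\mathfrak{g}}(A_J) \oplus \mathfrak{h}'$ (with $\mathfrak{h}'$ abelian and central) into $\bar{\mathfrak{g}}(A)$, and then descend to $\mathfrak{g}(A)$ by matching the maximal defining ideals on both sides. This mirrors the classical Kac--Moody argument, adapted to the super setting.

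First, I would choose a subspace $\tilde{\mathfrak{h}} \subset \mathfrak{h}$ of dimension $|J| + \operatorname{cork}(A_J)$ containing $\{h_i : i \in J\}$ and on which the restrictions $\alpha_i|_{\tilde{\mathfrak{h}}}$ ($i \in J$) are linearly independent; this realizes $A_J$ inside $\mathfrak{h}$. A dimension count using the linear independence of $\alpha_1, \ldots, \alpha_n \in \mathfrak{h}^*$ shows that $\mathfrak{h} = \tilde{\mathfrak{h}} \oplus \mathfrak{h}'$, with $\mathfrak{h}'$ being exactly the subspace described in the statement. By the defining relations \ref{equ1}, elements of $\mathfrak{h}'$ commute with $\mathfrak{h}$ and with every $X_i, Y_i$ for $i \in J$ (since $\alpha_i|_{\mathfrak{h}'} = 0$), so $\mathfrak{h}'$ is central in the subalgebra $\bar{\mathfrak{l}} \subset \bar{\mathfrak{g}}(A)$ generated by $\mathfrak{h}$ and $\{X_i, Y_i\}_{i \in J}$. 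The universal property of $\bar{\mathfrak{g}}(A_J)$ (built with Cartan $\tilde{\mathfrak{h}}$) then yields a Lie superalgebra homomorphism
\[
\varphi: \bar{\mathfrak{g}}(A_J) \oplus \mathfrak{h}' \longrightarrow \bar{\mathfrak{g}}(A),
\]
whose image is exactly $\bar{\mathfrak{l}}$. Injectivity of $\varphi$ follows from the standard triangular decomposition $\bar{\mathfrak{g}}(A) = \bar{\mathfrak{n}}^- \oplus \mathfrak{h} \oplus \bar{\mathfrak{n}}^+$ (proved in the super case just as in the even case, via a suitable Verma construction): the free Lie superalgebras $\bar{\mathfrak{n}}^\pm_J$ on $\{X_i\}_{i \in J}$ (resp. $\{Y_i\}_{i \in J}$) embed canonically into $\bar{\mathfrak{n}}^\pm$, while $\varphi$ restricts to the identity on the Cartan part $\tilde{\mathfrak{h}} \oplus \mathfrak{h}' = \mathfrak{h}$.

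Finally, let $\mathfrak{J}_A \subset \bar{\mathfrak{g}}(A)$ and $\mathfrak{J}_{A_J} \subset \bar{\mathfrak{g}}(A_J)$ be the maximal ideals defining the quotients. Then $\varphi^{-1}(\mathfrak{J}_A)$ is an ideal of $\bar{\mathfrak{g}}(A_J) \oplus \mathfrak{h}'$ intersecting its Cartan $\tilde{\mathfrak{h}} \oplus \mathfrak{h}'$ trivially, hence contained in $\mathfrak{J}_{A_J} \oplus 0$. Conversely, using the standard characterization of $\mathfrak{J}_A$ as the sum of all $Q$-graded ideals with trivial weight-zero component, the ideal generated in $\bar{\mathfrak{g}}(A)$ by $\varphi(\mathfrak{J}_{A_J})$ has zero Cartan component and therefore lies in $\mathfrak{J}_A$. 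These two inclusions identify the kernels and produce the desired isomorphism $\mathfrak{l} \cong \mathfrak{h}' \oplus \mathfrak{g}(A_J)$. I expect the hard part to be this final matching step: one must verify that enlarging $\varphi(\mathfrak{J}_{A_J})$ to the ideal it generates inside the bigger algebra $\bar{\mathfrak{g}}(A)$ does not introduce any new weight-zero elements, which in turn relies on the triangular decomposition and the weight-graded description of $\mathfrak{J}_A$.
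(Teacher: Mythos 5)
Your strategy is the same as the paper's: realize the subalgebra $\bar{\mathfrak{l}}$ of $\bar{\mathfrak{g}}(A)$ generated by $\mathfrak{h}$ and $X_i,Y_i$ ($i\in J$) as $\mathfrak{h}'\oplus\bar{\mathfrak{g}}(A_J)$, and then show that the maximal ideal $\mathcal{J}$ of $\bar{\mathfrak{l}}$ meeting $\mathfrak{h}$ trivially coincides with $\mathcal{I}\cap\bar{\mathfrak{l}}$, where $\mathcal{I}$ is the defining ideal of $\mathfrak{g}(A)$. The inclusion $\mathcal{I}\cap\bar{\mathfrak{l}}\subseteq\mathcal{J}$ is immediate by maximality, as you note. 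But the step you explicitly defer --- that the ideal generated by $\mathcal{J}$ inside the \emph{larger} algebra $\bar{\mathfrak{g}}(A)$ acquires no weight-zero component --- is the entire content of the lemma, and ``relies on the triangular decomposition'' is not yet an argument: a priori, bracketing $\mathcal{J}$ with the extra generators $X_j,Y_j$ ($j\in I\setminus J$) could produce elements of weight zero. The missing observation is the following. Write $\mathcal{J}=\mathcal{J}^+\oplus\mathcal{J}^-$ with $\mathcal{J}^\pm$ in the positive (negative) part of $\bar{\mathfrak{l}}$. For $j\in I\setminus J$ one has $[Y_j,\mathcal{J}^+]=[X_j,\mathcal{J}^-]=0$: an element of $\mathcal{J}^+$ has weight $\sum_{i\in J}m_i\alpha_i$ with all $m_i\geq 0$ and not all zero, so the bracket would sit in weight $\sum_{i\in J}m_i\alpha_i-\alpha_j$, which has coefficients of mixed sign and (by linear independence of the $\alpha_i$ and the triangular decomposition of $\bar{\mathfrak{g}}(A)$) is not a weight. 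For $j\in J$ one has $[Y_j,\mathcal{J}^+]\subseteq\mathcal{J}^+$ because $\mathcal{J}$ is an ideal of $\bar{\mathfrak{l}}$ meeting $\mathfrak{h}$ trivially. Hence $U(\bar{\mathfrak{n}}^-)\cdot\mathcal{J}^+\subseteq\mathcal{J}^+$, and by PBW the ideal generated by $\mathcal{J}^+$ in $\bar{\mathfrak{g}}(A)$ equals $U(\bar{\mathfrak{n}}^+)\cdot\mathcal{J}^+\subseteq\bar{\mathfrak{n}}^+$, which meets $\mathfrak{h}$ trivially; symmetrically for $\mathcal{J}^-$. This is exactly the one-line computation the paper's proof supplies. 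With it inserted, your argument closes and is essentially identical to the published one; without it, the ``conversely'' half of your final paragraph is an assertion rather than a proof.
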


\begin{proof} Let $ \bar{{\mathfrak l}} $ be the subalgebra of $ \bar{{\mathfrak g}}\left(A\right) $ generated by $ {\mathfrak h} $, $ X_{i},Y_{i} $,
$ i\in J $. Then $ \bar{{\mathfrak l}} $ is isomorphic to $ {\mathfrak h}'\oplus\bar{{\mathfrak g}}\left(A_{J}\right) $, since by construction $ {\mathfrak h}' $ lies in
the center of $ \bar{{\mathfrak l}} $, and $ \left[\bar{{\mathfrak l}},\bar{{\mathfrak l}}\right]\cap{\mathfrak h}'=\left\{0\right\} $. Let ${\mathcal I}$ be the maximal ideal in
$ \bar{{\mathfrak g}}\left(A\right) $ intersecting $ {\mathfrak h} $ trivially and $ {\mathcal J} $ be the ideal in $ \bar{{\mathfrak l}} $ intersecting $ {\mathfrak h} $
trivially. We have to show that $ {\mathcal J}={\mathcal I}\cap\bar{{\mathfrak l}} $. Obviously, $ {\mathcal I}\cap\bar{{\mathfrak l}}\subset{\mathcal J} $. On the other
hand, write $ {\mathcal J}={\mathcal J}^{+}\oplus{\mathcal J}^{-} $, note that $ \left[Y_{j},{\mathcal J}^{+}\right]=\left[X_{j},{\mathcal J}^{-}\right]=0 $ for any $ j\in I-J $.
Therefore, the ideal generated by $ {\mathcal J} $ in $ \bar{{\mathfrak g}}\left(A\right) $ intersects $ {\mathfrak h} $ trivially. That
implies $ {\mathcal J}\subset{\mathcal I}\cap\bar{{\mathfrak l}} $, and lemma is proven.\end{proof}

A superalgebra $ {\mathfrak g}={\mathfrak g}\left(A\right) $ has a
natural $ {\mathbb Z} $-grading $ {\mathfrak g}=\oplus{\mathfrak
g}_{n} $ if we put $ {\mathfrak g}_{0}:={\mathfrak h} $ and $
{\mathfrak g}_{1}={\mathfrak g}_{\alpha_{1}}\oplus\dots
\oplus{\mathfrak g}_{\alpha_{n}} $. This grading is called {\em
principal}. We say that $ {\mathfrak g} $ is of {\em finite
growth\/} if $ \dim  {\mathfrak g}_{n} $ grows polynomially
depending on $ n $. This means the Gelfand-Kirillov dimension of $
{\mathfrak g} $ is finite.  The following Lemma is a
straightforward corollary of Lemma~\ref{lm20}.

\begin{lemma} \label{lm21}\myLabel{lm21}\relax  If $ {\mathfrak g}\left(A\right) $ is of finite growth, then for any subset $ J \subseteq I $ the
Lie superalgebra $ {\mathfrak g}\left(A_{J}\right) $ is of finite
growth.

\end{lemma}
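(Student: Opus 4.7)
The plan is to realize $\mathfrak{g}(A_J)$ as (essentially) a graded subalgebra of $\mathfrak{g}(A)$ via Lemma~\ref{lm20} and then read off the growth bound from that of $\mathfrak{g}(A)$.

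First I would invoke Lemma~\ref{lm20} to identify the subalgebra $\mathfrak{l} \subseteq \mathfrak{g}(A)$ generated by $\mathfrak{h}$ together with $\{X_i, Y_i\}_{i \in J}$ as $\mathfrak{h}' \oplus \mathfrak{g}(A_J)$, where $\mathfrak{h}'$ is central. In particular $\mathfrak{g}(A_J)$ is isomorphic to a subalgebra of $\mathfrak{g}(A)$ (modulo the central piece $\mathfrak{h}'$ sitting in degree zero).

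Next I would compare the two principal gradings. The principal grading of $\mathfrak{g}(A)$ places each $X_i$ in degree $1$, each $Y_i$ in degree $-1$, and $\mathfrak{h}$ in degree $0$; its restriction to $\mathfrak{l}$ therefore assigns the same degrees to the generators of $\mathfrak{g}(A_J) \subset \mathfrak{l}$ as its own principal grading does. Since both gradings are determined by the generators, they agree on $\mathfrak{g}(A_J)$, with $\mathfrak{h}'$ contributing only to degree $0$. Consequently, for every $n \neq 0$,
\begin{equation}
\dim \mathfrak{g}(A_J)_n \;=\; \dim \mathfrak{l}_n \;\leq\; \dim \mathfrak{g}(A)_n,
\notag\end{equation}
while $\mathfrak{g}(A_J)_0 = \mathfrak{h}_J$ is finite dimensional.

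Since $\dim \mathfrak{g}(A)_n$ grows polynomially in $n$ by hypothesis, so does $\dim \mathfrak{g}(A_J)_n$, and hence $\mathfrak{g}(A_J)$ has finite growth. There is no real obstacle here: the entire content is the observation that the principal grading of the parabolic-type subalgebra singled out by $J$ matches the one inherited from $\mathfrak{g}(A)$, which follows immediately from Lemma~\ref{lm20}.
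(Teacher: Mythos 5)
Your proposal is correct and is exactly the argument the paper intends: the paper simply remarks that the lemma is a straightforward corollary of Lemma~\ref{lm20}, and you have filled in the details by embedding $\mathfrak{g}(A_J)$ (up to the central degree-zero piece $\mathfrak{h}'$) as a graded subalgebra of $\mathfrak{g}(A)$ and noting that the two principal gradings agree on it. The dimension comparison in nonzero degrees then gives finite growth immediately.
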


The matrix $ A $ is called {\em indecomposable\/} if the set $ I $
can not be decomposed into the disjoint union of non-empty subsets
$ J,K $ such that $ a_{ij}=a_{ji}=0 $ whenever $ i\in I,j\in K $.
We say that $ A $ is {\em elemental\/} if it has no zero rows.
Otherwise we call $ A $ {\em non-elemental}.
\section{Integrability and finite growth }

We say that $ {\mathfrak g}\left(A\right) $ is {\em integrable\/} if $ \operatorname{ad}_{X_{i}} $ are locally
nilpotent for all $ i\in I $. In this case $ \operatorname{ad}_{Y_{i}} $ are also locally nilpotent.

\begin{lemma} \label{lm23}\myLabel{lm23}\relax  Let $ {\mathfrak g}\left(A\right) $ be integrable and let
$ A $ be elemental, indecomposable with $ n\geq2 $. Then after
rescaling the rows $ A $
satisfies the following conditions
\begin{enumerate}
\item for any $ i\in I $ either $ a_{i i}=0 $ or $ a_{i i}=2 $;
\item if $ a_{i i}=0 $ then $ p\left(i\right)=1 $; \item if $ a_{i
i}=2 $ then $ a_{ij}\in2^{p\left(i\right)}{\mathbb Z}_{-} $;
 \item
if $ a_{ij}=0 $ and $ a_{ji}\not=0 $, then $ a_{i i}=0 $.
\end{enumerate}
\end{lemma}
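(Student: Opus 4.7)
The plan is to verify the four conditions in turn, each as a consequence of applying the integrability hypothesis to a carefully chosen subalgebra of $\mathfrak{g}(A)$. Condition (1) is immediate: rescaling the $i$-th row of $A$ by a nonzero scalar $c$ replaces that row by $c\cdot(a_{i1},\ldots,a_{in})$, so whenever $a_{ii}\neq 0$ one picks $c=2/a_{ii}$ to force $a_{ii}=2$.

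For (2), suppose $a_{ii}=0$ and $p(i)=0$, so that $X_i,Y_i,h_i$ span a Heisenberg subalgebra. Fix any $j\neq i$ and set $v_k:=(\operatorname{ad}_{X_i})^k(X_j)$. Using $[Y_i,X_j]=0$ together with the weight computation $[h_i,v_{k-1}]=a_{ij}v_{k-1}$ (the contribution from $a_{ii}$ vanishes), super-Jacobi yields, by induction on $k$, the recurrence $[Y_i,v_k]=-k\,a_{ij}\,v_{k-1}$. If $a_{ij}\neq 0$, this forces $v_k\neq 0$ for every $k$, contradicting local nilpotency of $\operatorname{ad}_{X_i}$. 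Hence $a_{ij}=0$ for every $j\neq i$, making row $i$ zero and violating elementality.

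For (3), assume $a_{ii}=2$. If $p(i)=0$, then $X_i,Y_i,h_i$ generate $sl(2)$ and (since $[Y_i,X_j]=0$ for $j\neq i$) the vector $X_j$ is a lowest weight vector of $h_i$-weight $a_{ij}$. Local nilpotency forces the $sl(2)$-submodule it generates to be finite dimensional, so $sl(2)$-representation theory gives $a_{ij}\in\mathbb{Z}_-$. If instead $p(i)=1$, then $X_i^2$, $Y_i^2$, and $h_i$ span the even $sl(2)$ inside $\operatorname{osp}(1|2)$ at $i$, and the identity $(\operatorname{ad}_{X_i})^2=\operatorname{ad}_{X_i^2}$ (similarly for $Y_i$) shows that this $sl(2)$ also acts locally nilpotently. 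A direct calculation gives $[h_i,X_i^2]=4X_i^2$, so $h_i$-weights on finite dimensional modules lie in $2\mathbb{Z}$; the same lowest-weight argument now yields $a_{ij}\in 2\mathbb{Z}_-=2^{p(i)}\mathbb{Z}_-$.

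The main obstacle is (4). Assume for contradiction that $a_{ij}=0$, $a_{ji}\neq 0$, and $a_{ii}=2$. I first claim $[X_i,X_j]=0$. Set $v:=[X_i,X_j]$; using $[Y_i,X_j]=0$, $[h_i,X_j]=a_{ij}X_j=0$, and super-Jacobi one finds $[Y_i,v]=0$, so $v$ is a lowest weight vector for the (even) $sl(2)$ at $i$ of $h_i$-weight $a_{ii}+a_{ij}=2$. But part (3) shows that every lowest $h_i$-weight of a finite dimensional $sl(2)$-module at $i$ is non-positive, so $v=0$. Super-antisymmetry then gives $[X_j,X_i]=0$, while $[Y_j,X_i]=0$ since $i\neq j$. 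Super-Jacobi applied to $[[X_j,Y_j],X_i]$ now yields $[h_j,X_i]=0$, i.e.\ $a_{ji}=0$, contradicting the assumption.
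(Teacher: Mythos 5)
Your proof is correct and follows essentially the same strategy as the paper: restrict to the rank-one subalgebra at vertex $i$ (Heisenberg, $sl(2)$, or $\operatorname{osp}(1|2)$) and use local nilpotency to constrain the weights of the module generated by the $j$-th generator. The only differences are cosmetic — you work with lowest weight vectors $X_j$ and an explicit recurrence where the paper uses highest weight vectors $Y_j$ and cites the structure of highest weight modules, and in (4) you kill $[X_i,X_j]$ by a positivity-of-weight argument where the paper kills $[Y_i,Y_j]$ by noting the weight-zero module must be trivial.
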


\begin{proof} If $ a_{i i}=0 $ and $ p\left(i\right)=0 $, then $ h_{i},X_{i},Y_{i} $ generate the
Heisenberg subalgebra $ {\mathfrak k} $. There exists $ j $ such that $ a_{ij}\not=0 $. Then $ Y_{j} $
generates the irreducible infinite dimensional $ {\mathfrak k} $-submodule with central
charge $ -a_{ij} $. Hence $ \left(\operatorname{ad}_{Y_{i}}\right)^{m}Y_{j}\not=0 $ for all $ m>0 $. This proves the first
statement.

If $ a_{i i}=2 $, then $ h_{i},X_{i},Y_{i} $ generate the subalgebra $ {\mathfrak k} $ isomorphic to $ sl_{2} $
for even $ i $ and $ \operatorname{osp}\left(1|2\right) $ for odd $ i $. Any $ Y_{j} $ generates a $ {\mathfrak k} $-submodule $ M $ with
highest weight $ -a_{ij} $. If $ \operatorname{ad}_{Y_{i}} $ is locally nilpotent, this submodule must be
finite-dimensional. From elementary representation theory we know that
this implies $ -a_{ij}\in{\mathbb Z}_{\geq0} $ for $ sl_{2} $ and $ -a_{ij}\in2{\mathbb Z}_{\geq0} $ for $ \operatorname{osp}\left(1|2\right) $. The second
statement is proven.

To prove the last statement assume the opposite, i.e. $ a_{i i}=2 $, $ a_{ij}=0 $,
$ a_{ji}\not=0 $. Let $ {\mathfrak k} $ and $ M $ be as in the previous paragraph. Since $ -a_{ij}=0 $, $ M $
has the highest weight 0, therefore $ M $ is a trivial $ {\mathfrak k} $-module or a Verma
module. Since $ \operatorname{ad}_{Y_{i}} $ is locally nilpotent, then $ M $ is trivial. Hence
$ \left[Y_{i},Y_{j}\right]=0 $. But
\begin{equation}
\left[X_{j},\left[Y_{i},Y_{j}\right]\right]=\pm\left[H_{j},Y_{i}\right]=\pm a_{ji}Y_{i}\not=0.
\notag\end{equation}
Contradiction.\end{proof}

\begin{theorem} \label{th1}\myLabel{th1}\relax  Let A be elemental. If $ {\mathfrak g}\left(A\right) $ has finite growth,
then $ {\mathfrak g}\left(A\right) $ is integrable.

\end{theorem}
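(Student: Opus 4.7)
My plan is to prove the contrapositive: if some $\operatorname{ad}_{X_i}$ fails to be locally nilpotent on $\mathfrak{g}(A)$, then $\mathfrak{g}(A)$ does not have finite growth. The first move is a reduction to the rank-$2$ case. The super-Leibniz identity propagates local nilpotence of $\operatorname{ad}_{X_i}$ from a generating set to all of $\mathfrak{g}(A)$, and $\operatorname{ad}_{X_i}$ is automatically locally nilpotent on $\mathfrak{h}$ and on $X_i, Y_i$ (direct check in each of the four cases for a simple root listed in Section~2). Moreover, in the isotropic odd case $a_{ii} = 0, p(i)=1$, super-Jacobi gives $[X_i,X_i]=0$ and hence $\operatorname{ad}_{X_i}^2=0$ globally, so that case is already excluded. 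Therefore the failure of local nilpotence must appear on some $X_j$ or $Y_j$ with $j \neq i$, and because $A$ is elemental we may choose $j$ with $a_{ij}\neq 0$. Taking $J=\{i,j\}$, Lemma~\ref{lm20} places a copy of $\mathfrak{g}(A_J)$ inside $\mathfrak{g}(A)$ on which $\operatorname{ad}_{X_i}$ still fails to be locally nilpotent, while Lemma~\ref{lm21} ensures $\mathfrak{g}(A_J)$ still has finite growth. So we may assume $n=2$.

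For the rank-$2$ analysis, suppose the obstruction is at $i=1$ and split on $a_{11}$. If $a_{11}=2$, then $\mathfrak{k}_1$ is $sl_2$ or $\operatorname{osp}(1|2)$, and $X_2$ is a lowest-weight vector of $h_1$-weight $a_{12}$, killed by $\operatorname{ad}_{Y_1}$. The representation theory of these algebras translates non-integrability of $\operatorname{ad}_{X_1}$ on $X_2$ into $a_{12}\notin \mathbb{Z}_{\leq 0}$, respectively $a_{12}\notin 2\mathbb{Z}_{\leq 0}$, so $(\operatorname{ad}_{X_1})^m X_2 \neq 0$ for all $m$. If instead $a_{11}=0$ with $p(1)=0$, then $\mathfrak{k}_1$ is the Heisenberg algebra, acting on $X_2$ through the nonzero central charge $a_{12}$, so again $(\operatorname{ad}_{X_1})^m X_2 \neq 0$ for all $m$. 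In either case the idea is that combining this infinite chain with iterated brackets in $X_1,X_2$ produces a nearly-free image of the free Lie superalgebra on two generators inside $\mathfrak{n}^+\subset \mathfrak{g}(A)$, whose principal-graded dimensions grow exponentially by Witt's formula, contradicting finite growth.

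The main obstacle is exactly this last claim: controlling the maximal ideal $\mathcal{I}\subset \bar{\mathfrak{g}}(A)$ so that $\mathcal{I}\cap \mathfrak{n}^+$ is too small to kill the exponential growth. I would proceed by descent: any nonzero $v \in \mathcal{I}\cap\mathfrak{n}^+$ at root $\beta = m\alpha_1+n\alpha_2$ can be commuted with an appropriate product of $Y_1$'s and $Y_2$'s to land in $\mathcal{I}\cap \mathfrak{h}=0$, yielding a nontrivial polynomial identity in $a_{12}, a_{21}$. Carrying out this descent case by case on $(a_{11},a_{22},p(1),p(2))$, one checks that every such identity forces $a_{12}$ or $a_{21}$ back into the integer range compatible with integrability, contradicting our assumption. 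Hence $\mathcal{I}\cap \mathfrak{n}^+$ is small enough that the exponential growth of the free Lie superalgebra on $X_1,X_2$ survives in the quotient, completing the proof.
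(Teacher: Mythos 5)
Your overall strategy (prove the contrapositive, restrict to a small principal submatrix via Lemmas~\ref{lm20} and~\ref{lm21}, then exhibit exponential growth) matches the paper's, but there are two genuine gaps. First, the reduction to rank~$2$ does not work as stated: the index $j$ on which $\operatorname{ad}_{X_i}$ fails to be locally nilpotent is forced on you, not chosen, and elementality of $A$ does not let you trade it for a $j$ with $a_{ij}\neq 0$. You must handle the configurations $a_{ij}=0$, $a_{ji}\neq 0$ (the paper's Lemma~\ref{lm25}, which builds a new rank-$2$ system out of $X_1$ and $[X_1,X_2]$ to reduce to the case $a_{12}a_{21}\neq 0$), and, worse, the configurations with $a_{ji}=a_{jj}=0$, where $A_{\{i,j\}}$ is non-elemental even though $A$ is; there the rank-$2$ subalgebra genuinely fails to detect infinite growth and one must pass to a third index $k$ with $a_{jk}\neq 0$ and run a Heisenberg free-module argument (the paper's Lemma~\ref{lm26}). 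Your blanket ``we may assume $n=2$'' silently skips these cases.

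Second, and more seriously, the core growth estimate is not actually proved. The claim of ``a nearly-free image of the free Lie superalgebra on two generators \dots\ whose dimensions grow exponentially by Witt's formula'' is precisely the statement at issue: the maximal ideal $\mathcal{I}$ generically contains Serre-type relations, and the quotient very often has polynomial growth, so exponential growth cannot be inherited from the free Lie superalgebra without a non-vanishing certificate. Your descent scheme (commute an element of $\mathcal{I}\cap\mathfrak{n}^+$ with $Y$'s until it lands in $\mathfrak{h}$ and extract a polynomial identity in $a_{12},a_{21}$) does not supply one: an element of $\mathcal{I}\cap\mathfrak{n}^+$ is simply annihilated before reaching $\mathfrak{h}$, which produces no contradiction and no control on how large $\mathcal{I}$ is. The paper's Lemma~\ref{lm24} resolves this with a device your proposal is missing: a $2\times 2$ matrix with $a_{12}a_{21}\neq 0$ is automatically symmetrizable, hence $\mathfrak{g}(A)$ carries an invariant symmetric form; setting $E_k=(\operatorname{ad}_{X_1})^{3k}X_2$ and $\beta_k=3k\alpha_1+\alpha_2$, the form (via Lemma~\ref{lm245}) certifies that $[F_{k_1},[E_{k_1},[E_{k_2}\dots E_{k_r}]]]\neq 0$ for all $k\leq k_1<\dots<k_r$, so these particular iterated brackets are nonzero and linearly independent, and the count of partitions of $n$ into distinct parts bounded below gives exponential growth. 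Without an analogue of this certificate (and of the separate free-module argument in the Heisenberg-type cases) your argument does not close.
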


\begin{proof} We start with the following

\begin{lemma} \label{lm24}\myLabel{lm24}\relax  Let $ n=2 $ and $ a_{12} a_{21}\not=0 $. If $ \operatorname{ad}_{X_{1}} $ does not act
nilpotently on $ X_{2} $, then $ {\mathfrak g}\left(A\right) $ has infinite growth.

\end{lemma}

\begin{proof} We use the fact that $ A $ is symmetrizable, therefore $ {\mathfrak g}\left(A\right) $
admits an invariant symmetric form ( , ). By the condition of Lemma
$ \left(\operatorname{ad}_{X_{1}}\right)^{m}X_{2}\not=0 $ for any $ m $. Let
\begin{equation}
E_{k}=\left(\operatorname{ad}_{X_{1}}\right)^{3k}X_{2}\text{, }F_{k}=\left(\operatorname{ad}_{Y_{1}}\right)^{3k}Y_{2}\text{, }H_{k}=\left[E_{k},F_{k}\right]\text{, }\beta_{k}=3k\alpha_{1}+\alpha_{2}\text{, }k\in{\mathbb Z}_{\geq0}.
\notag\end{equation}
Obviously, $ \beta_{k}-\beta_{l}\notin\Delta $ if $ k\not=l $. Hence
\begin{equation}
\left[E_{k},F_{l}\right]=\delta_{kl}H_{k}. \notag\end{equation} Let
$ {\mathfrak l} $ be the Lie subalgebra generated by $
H_{k},E_{k},F_{k} $ with $ k\in{\mathbb Z}_{+} $. We claim that $
{\mathfrak l} $ has infinite growth with respect to the grading
induced by the principal grading of $ {\mathfrak g}\left(A\right) $,
and therefore $ {\mathfrak g}\left(A\right) $ has infinite growth.

To prove our claim we will show first that there exists $ k $ such that
\begin{equation}
\left(\beta_{k_{1}},\beta_{k_{2}}+\dots +\beta_{k_{r}}\right)\not=0
\label{equ21}\end{equation}\myLabel{equ21,}\relax
for any $ k\leq k_{1}<\dots <k_{r} $. The claim follows immediately from the following

\begin{lemma} \label{lm245}\myLabel{lm245}\relax  Let $ V $ be a two-dimensional space with non-zero
symmetric bilinear product $ \left(\cdot,\cdot\right) $. For any two
vectors $ v,w\in V $ with $ \left(v,w\right)\not=0 $, there exist $ k>0 $ and $ c\in{\mathbb C} $ such that
\begin{equation}
\operatorname{Re}\text{ }  c\left(v+pw,v+qw\right)>0
\notag\end{equation} for all $ p,q\geq k $.

\end{lemma}

\begin{proof} Assume first that $ \left(w,w\right)\not=0 $. Put $ c=\frac{1}{\left(w,w\right)} $. Then
\begin{equation}
\lim _{p,q\to\infty}\frac{c\left(v+pw,v+qw\right)}{pq}=1.
\notag\end{equation}

Now let $ \left(w,w\right)=0 $. Put $ c=\frac{1}{\left(v,w\right)} $. Then again
\begin{equation}
\lim _{p,q\to\infty}\frac{c\left(v+pw,v+qw\right)}{pq}=1,
\notag\end{equation} and Lemma~\ref{lm245} is proven.\end{proof}

Recall that $ \left(\beta_{k_{2}}+\dots +\beta_{k_{r}}\right)\left(H_{k_{1}}\right) $ is proportional to
$ \left(\beta_{k_{1}},\beta_{k_{2}}+\dots +\beta_{k_{r}}\right) $ with non-zero coefficient and therefore it is not zero.
Hence
\begin{equation}
\left[F_{k_{1}},\left[E_{k_{1}}\left[E_{k_{2}}\dots E_{k_{r}}\right]\right]\right]=\left[H_{k_{1}},\left[E_{k_{2}}\dots E_{k_{r}}\right]\right]=\left(\beta_{k_{2}}+\dots +\beta_{k_{r}}\right)\left(H_{k_{1}}\right)\left[E_{k_{2}}\dots E_{k_{r}}\right]\not=\text{
0}
\notag\end{equation}
if $ k\leq k_{1}<\dots <k_{r} $.

The last calculation shows that
the commutator $ \left[E_{k_{1}},\dots ,E_{k_{r}}\right] $ is not zero. Moreover, the same
calculation shows that all such commutators are linearly independent.

\begin{lemma} \label{lm246}\myLabel{lm246}\relax  Let $ m $ be a positive integer, and let $ f_{m}\left(n\right) $ be the number of
ways to write $ n $ as the sum $ m_{1}+\dots +m_{r} $ with $ m_{r}>m_{r-1}>\dots >m_{1}\geq m $.
Then $ f_{m}\left(n\right) $ grows exponentially on $ n $.

\end{lemma}

Finally, introduce a new $ {\mathbb Z} $-grading on $ {\mathfrak
g}\left(A\right) $, by putting $ \deg ' X_{1}=1 $, $ \deg '
X_{2}=3 $. One can see that
\begin{equation}
\deg ' \left[E_{k_{1}},\dots
,E_{k_{r}}\right]=3\left(k_{1}+1\right)+\dots
+3\left(k_{r}+1\right). \notag\end{equation} Therefore $ \dim
{\mathfrak g}\left(A\right)'_{3n}\geq f_{k+1}\left(n\right) $. Hence
$ {\mathfrak g}\left(A\right) $ has infinite growth in this new
grading. However $ |\deg '\left(X\right)|\leq3 |\deg \left(X\right)|
$ for any $ X\in{\mathfrak g}\left(A\right) $, therefore $
{\mathfrak g}\left(A\right) $ has infinite growth in the principal
grading. Hence, Lemma~\ref{lm24} is proven.\end{proof}

\begin{lemma} \label{lm25}\myLabel{lm25}\relax  Let $ n=2 $, $ a_{11}=2 $, $ a_{12}=0 $, $ a_{21}\not=0 $. Then $ {\mathfrak g}\left(A\right) $ is of
infinite growth.

\end{lemma}

\begin{proof} Let $ X_{12}=\left[X_{1},X_{2}\right] $, $ Y_{12}=\frac{\left(-1\right)^{p\left(1\right)p\left(2\right)}}{a_{21}}\left[Y_{1},Y_{2}\right] $. Then one can
check easily the following relations
\begin{equation}
\left[X_{12},Y_{12}\right]=\left[X_{1},Y_{1}\right]=H_{1}\text{,
}\left[X_{12},Y_{1}\right]=\left[Y_{12},X_{1}\right]=0.
\notag\end{equation} Then $ X_{12},Y_{12},X_{1} $, $ Y_{1} $ and $
{\mathfrak h} $ generate a subalgebra isomorphic to a quotient of
$ \bar{{\mathfrak g}}\left(B\right) $ for $
b_{11}=b_{12}=b_{21}=b_{22}=2 $. In particular, $ad_{X_1}$ is not
nilpotent.  By Lemma~\ref{lm24} such an algebra
has infinite growth, therefore $ {\mathfrak g}\left(A\right) $ has
infinite growth.\end{proof}

\begin{lemma} \label{lm26}\myLabel{lm26}\relax  Let $ n=3 $, $ a_{21}=a_{22}=0 $ and $ a_{23}\not=0 $.
If $ \operatorname{ad}_{X_{1}} $ does not act
nilpotently on $ X_{2} $, then $ {\mathfrak g}\left(A\right) $ has infinite growth.

\end{lemma}

\begin{proof} Note that $ X_{1},X_{2} $, $ Y_{1},Y_{2} $ generate the subalgebra isomorphic
to $ {\mathfrak g}\left(A_{\left\{1,2\right\}}\right) $. However $ A_{\left\{1,2\right\}} $ is non-elemental, and by Lemma~\ref{lm22},
$ {\mathfrak g}\left(A_{\left\{1,2\right\}}\right) $ contains an infinite Heisenberg Lie superalgebra $ {\mathfrak l}={\mathfrak l}_{-1}\oplus{\mathbb C}h_{2}\oplus{\mathfrak l}_{1} $.
Note that $ \operatorname{ad}_{{\mathfrak l}_{1}}Y_{3}=0 $ and $ \operatorname{ad}_{h_{2}}\left(Y_{3}\right)=a_{23}Y_{3} $. Therefore the $ {\mathfrak l} $-submodule $ M $
generated by $ Y_{3} $ is isomorphic to $ U\left({\mathfrak l}\right)\otimes_{U\left({\mathbb C}h_{2}\oplus{\mathfrak l}_{1}\right)}{\mathbb C}Y_{3} $. It is not difficult
to see that $ M $ has infinite growth in the principal grading. Hence $ {\mathfrak g}\left(A\right) $
has infinite growth.\end{proof}

Now we can prove the theorem. Assume that $ \operatorname{ad}_{X_{i}} $ is not locally
nilpotent.
Then $ \operatorname{ad}_{X_{i}} $ does not act nilpotently on some $ X_{j} $. If $ a_{ij}=a_{ji}=0 $ then
$ \left[X_{i},X_{j}\right]=0 $. Therefore either $ a_{ij}\not=0 $ or $ a_{ji}\not=0 $. Consider the following
cases
\begin{enumerate}
\item
If $ a_{ij}a_{ji}\not=0 $, then $ A_{\left\{i,j\right\}} $ satisfies the conditions of Lemma~\ref{lm24},
and therefore $ {\mathfrak g}\left(A_{\left\{i,j\right\}}\right) $ has infinite growth;
\item
If $ a_{i i}=2,a_{ij}=0 $, then $ A_{\left\{i,j\right\}} $ satisfies the conditions of Lemma~%
\ref{lm25}, and therefore $ {\mathfrak g}\left(A_{\left\{i,j\right\}}\right) $ has infinite growth;
\item
The case $ a_{i i}=a_{ij}=0 $ is impossible since Lemma~\ref{lm22} implies
$ \operatorname{ad}_{X_{i}}^{2}X_{j}=0 $;
\item
If $ a_{i i}=2,a_{ij}\not=0 $, $ a_{ji}=a_{j j}=0 $ then $ a_{jk}\not=0 $ for some $ k $ since
$ A $ is non-elemental. Then by Lemma~\ref{lm26} $ {\mathfrak g}\left(A_{\left\{i,j,k\right\}}\right) $
has infinite growth;
\item
If $ a_{i i}=a_{jj}=2,a_{ij}\not=0 $, $ a_{ji}=0 $, then $ A_{\left\{i,j\right\}} $ satisfies the conditions of
Lemma~\ref{lm25}, and therefore $ {\mathfrak g}\left(A_{\left\{i,j\right\}}\right) $ has infinite growth;
\item
If $ a_{i i}=0,a_{ij}\not=0 $, $ a_{ji}=a_{jj}=0 $ then by the same argument as in the
fourth case $ {\mathfrak g}\left(A_{\left\{i,j,k\right\}}\right) $ has infinite growth for some $ k $;
\item
If $ a_{i i}=a_{ji}=0,a_{ij}\not=0 $, $ a_{jj}=2 $ then $ A_{\left\{i,j\right\}} $ satisfies the conditions of
Lemma~\ref{lm25}, and therefore $ {\mathfrak g}\left(A_{\left\{i,j\right\}}\right) $ has infinite growth.
\end{enumerate}

Thus, we have shown that $ {\mathfrak g}\left(A\right) $ contains a subalgebra of infinite
growth. Therefore $ {\mathfrak g}\left(A\right) $ itself has infinite growth. Contradiction.\end{proof}

\section{Odd reflections}
Suppose $
a_{kk}=0 $ and $ p(\alpha_{k})=1 $. Define $ r_{k}(\alpha_{i}) $
by the following formula
\begin{equation} r_{k}(\alpha_{k})=-\alpha_{k}, \notag \end{equation}
\begin{equation} r_{k}(\alpha_{i})=\alpha_{i} \text{ if } a_{ik}=a_{ki}=0, i \neq k, \notag \end{equation}
\begin{equation} r_{k}(\alpha_{i})=\alpha_{i}+\alpha_{k} \text{ if } a_{ik} \neq 0 \text{ or } a_{ki} \neq 0,
 i \neq k. \notag \end{equation}
Define
\begin{align*}
 X_{i}' &= \begin{cases}
    [X_{i},X_{k}] &\text{ if }i \neq k\text{ and }a_{ik} \neq 0\text{ or }a_{ki} \neq 0 \\
    X_{i} &\text{ if }i \neq k\text{ and }a_{ik}=a_{ki}=0 \\
    Y_{i} &\text{ if }i=k\end{cases}
    \\
Y_{i}' &= \begin{cases}
    [Y_{i},Y_{k}] &\text{ if }i \neq k\text{ and }a_{ik} \neq 0\text{ or }a_{ki} \neq 0 \\
    Y_{i} &\text{ if }i \neq k\text{ and }a_{ik}=a_{ki}=0 \\
    X_{i} &\text{ if } i=k  \end{cases}
    \\
h_{i}' &= [X_{i}',Y_{i}'].
\end{align*}
Then we have $ h_{i}' = \begin{cases}
    (-1)^{p(\alpha_{i})} (a_{ik}h_{k}+a_{ki}h_{i}) &\text{ if }i \neq k\text{ and }a_{ik} \neq 0\text{ or }a_{ki} \neq 0 \\
    h_{i} &\text{ if }i \neq k\text{ and }a_{ik}=a_{ki}=0 \\
    h_{k} &\text{ if } i=k  \end{cases} $

\begin{lemma} \label{lm1}\myLabel{lm1}\relax  The roots $ r_{i}\left(\alpha_{1}\right),\dots ,r_{i}\left(\alpha_{n}\right) $ are linearly independent. The
elements $ X'_{1},\dots ,X'_{n},Y'_{1},\dots ,Y'_{n} $ together with $ h'_{1},\dots ,h'_{n} $ satisfy~\eqref{equ1}.
Moreover if $ \alpha_{i} $ is regular, then $ {\mathfrak h} $ and $ X'_{1},\dots ,X'_{n},Y'_{1},\dots ,Y'_{n} $ generate $ {\mathfrak g} $.
\end{lemma}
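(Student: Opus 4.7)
The plan is to treat the three assertions in order. For linear independence of the $r_k(\alpha_j)$, I would express each $r_k(\alpha_j)$ in the basis $\alpha_1,\dots,\alpha_n$: the change-of-basis matrix is obtained from the identity by replacing the $(k,k)$-entry by $-1$ and placing $+1$'s in the $k$-th column at rows $j$ with $a_{jk}\neq 0$ or $a_{kj}\neq 0$. This matrix is triangular with nonzero diagonal, hence invertible.

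For the second assertion I would verify the three relations of~\eqref{equ1} directly for the new generators. The weight of $X_i'$ is $r_k(\alpha_i)$ by inspection: for $i=k$, $X_k'=Y_k$ has weight $-\alpha_k$; for $i\neq k$ with $a_{ik}\neq 0$ or $a_{ki}\neq 0$, $X_i'=[X_i,X_k]$ has weight $\alpha_i+\alpha_k$; and otherwise $X_i'=X_i$. The same holds for $Y_i'$, giving the $\mathfrak h$-eigenvalue relations. The diagonal relation $[X_i',Y_i']=h_i'$ is the definition of $h_i'$; the explicit formula for $h_i'$ in the nontrivial case follows from two applications of super-Jacobi, using $[X_k,Y_k]=h_k$, $[X_i,Y_i]=h_i$, and $[X_i,Y_k]=[X_k,Y_i]=0$. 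The off-diagonal relation $[X_i',Y_j']=0$ for $i\neq j$ is the main content; I would dispatch it by cases according to whether $i$ or $j$ equals $k$ and whether the relevant Cartan entries vanish. Each case reduces to one of three structural ingredients: (i) the original relations of~\eqref{equ1}; (ii) the isotropic identity $[X_k,X_k]=[Y_k,Y_k]=0$, which via super-Jacobi forces $[[X_i,X_k],X_k]=0$ and $[Y_k,[Y_j,Y_k]]=0$; or (iii) Lemma~\ref{lm20} applied to the decomposable $2\times2$ submatrix $A_{\{j,k\}}$ when $a_{jk}=a_{kj}=0$, which yields $[X_k,X_j]=[Y_k,Y_j]=0$.

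For the third assertion it suffices to exhibit each original $X_i$ and $Y_i$ inside the subalgebra generated by $\mathfrak h$ and the primed generators. In the cases $i=k$ and $i\neq k$ with $a_{ik}=a_{ki}=0$, this is immediate from the definitions of $X_i',Y_i'$. For $i\neq k$ with $X_i'=[X_i,X_k]$, super-Jacobi gives
\begin{equation*}
[X_i',X_k']=[[X_i,X_k],Y_k]=[X_i,[X_k,Y_k]]=[X_i,h_k]=-a_{ki}\,X_i,
\end{equation*}
and analogously for $Y_i'$ and $Y_k'$. Regularity of the reflected root reads $a_{kj}=0\Rightarrow a_{jk}=0$ for all $j$, equivalently $a_{ik}\neq 0\Rightarrow a_{ki}\neq 0$; combined with the case hypothesis that $a_{ik}\neq 0$ or $a_{ki}\neq 0$, this forces $a_{ki}\neq 0$, so the scalar is invertible and $X_i$ (and symmetrically $Y_i$) is recovered.

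The main obstacle is the bookkeeping in the off-diagonal verification of part two: the super-signs in the Jacobi identity must be tracked carefully and one must check that every combination of $(i,j,k)$ reduces cleanly to one of the three listed ingredients. Each individual computation is short, but there are enough of them to invite a sign error, and the interplay between the isotropy of $\alpha_k$ (used in ingredient (ii)) and the decomposability criterion of Lemma~\ref{lm20} (used in ingredient (iii)) is what makes the cases fit together.
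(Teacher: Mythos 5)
Your proposal is correct and follows essentially the same route as the paper: the decisive points --- $\operatorname{ad}_{X_k}^2=\operatorname{ad}_{Y_k}^2=0$ from isotropy of $\alpha_k$ for the off-diagonal relations involving the reflected index, and regularity forcing $a_{ki}\neq 0$ so that $[X'_i,X'_k]=-a_{ki}X_i$ recovers the old generators --- are exactly the ones the paper uses. The only divergence is cosmetic: for $i,j$ both different from $k$ the paper disposes of $[X'_i,Y'_j]=0$ in one line by noting that $r_k(\alpha_i)-r_k(\alpha_j)$ is never a root (it has coefficients of mixed sign), whereas you grind through the Jacobi identities and invoke Lemma~\ref{lm20} for unlinked pairs; both are valid, and your explicit treatment of the unlinked case $[Y_k,Y_j]=0$ is if anything more careful than the paper's.
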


\begin{proof} We have to check that $ \left[X'_{j},Y'_{k}\right]=0 $ for any $ j\not=k $. If $ j,k\not=i $,
then $ r_{i}\left(\alpha_{j}\right)-r_{i}\left(\alpha_{k}\right)\notin\Delta $, that forces $ \left[X'_{j},Y'_{k}\right]=0 $. So assume that $ j=i $. Then
for some constant $ c $ we have
\begin{equation}
\left[X'_{i},Y'_{k}\right]=c\left[Y_{i},\left[Y_{i},Y_{k}\right]\right]=0,
\notag\end{equation}
as $ \operatorname{ad}_{Y_{i}}^{2}=0 $. Similarly one deals with the case $ k=i $.

To prove the second statement note that if $ r_{i}\left(\alpha_{j}\right)=\alpha_{j}+\alpha_{i} $, then
\begin{equation}
X_{j}=\frac{1}{a_{ij}}\left[Y_{i},\left[X_{i},X_{j}\right]\right]=d\left[X'_{i},X'_{j}\right]
\notag\end{equation}
for some non-zero constant $ d $. Similarly, $ Y_{j}=d'\left[Y'_{i},Y'_{j}\right] $. If $ r_{i}\left(\alpha_{j}\right)=\alpha_{j} $, then
$ X_{j}=X'_{j} $, $ Y_{j}=Y'_{j} $. Finally, $ X_{i}=Y'_{i} $ and $ Y_{i}=X'_{i} $. Hence every generator $ X_{j},Y_{j} $
can be expressed in terms of $ X'_{1},\dots ,X'_{n},Y'_{1},\dots ,Y'_{n} $.\end{proof}

We see from Lemma~\ref{lm1} that if one has a matrix $ A $ and
some regular isotropic simple root $ \alpha_{i} $ for
$\mathfrak{g}(A)$, then one can construct another matrix $ A' $
such that $ {\mathfrak g}\left(A'\right)\cong{\mathfrak
g}\left(A\right) $. In this case we say that $ A' $ is obtained
from $ A $ and a base $ \Pi'=\left\{\alpha'_{1},\dots
,\alpha'_{n}\right\} $ is obtained from $
\Pi=\left\{\alpha_{1},\dots ,\alpha_{n}\right\} $ by the {\em
regular odd reflection\/} with respect to $ \alpha_{i} $. If $
\left(\Delta^{+}\right)' $ denotes the set of positive roots with
respect to $ \Pi' $, then one can check easily that
\begin{equation}
\left(\Delta^{+}\right)'=\Delta^{+}\cup\left\{-\alpha_{i}\right\}-\left\{\alpha_{i}\right\}.
\label{equ2}\end{equation}\myLabel{equ2,}\relax
Thus, any two sets of positive roots differ only by finite set of
isotropic roots. In particular, the set of even positive roots $ \Delta_{0}^{+} $ is
defined independently of a choice of a base $ \Pi $.

After rescaling the rows of the matrix $ A' $ we have that it is
defined by the following, (where we assume $ i \neq k $ and $ j
\neq k $),
\begin{align*}
    a_{kk}' &= a_{kk} \\
    a_{kj}' &= a_{kj} \\
    a_{ik}' &= -a_{ki}a_{ik} \\
    a_{ij}' &= \begin{cases}
        a_{ij} &\text{ if } a_{ik}=a_{ki}=0 \\
        a_{ki}a_{ij} &\text{ if }a_{ik}\neq 0 \text{ or } a_{ki} \neq 0, \text{ and } a_{kj}=a_{jk}=0 \\
        a_{ki}a_{ij}+a_{kj}a_{ik}+a_{ki}a_{ik} &\text{ if }a_{ik}\neq 0 \text{ or } a_{ki} \neq 0, \text{ and } a_{jk}\neq 0 \text{ or } a_{kj} \neq 0.
        \end{cases}
\end{align*}

\begin{remark} In practice, we rescale the rows of the matrix $A'$ so that the nonzero diagonal entries are equal to 2.
\end{remark}

\begin{remark} \label{rem1}\myLabel{rem1}\relax  If $ \alpha_{i} $ is a singular isotropic root,
then the corresponding odd reflection is called {\em singular\/}.
In this case, we can only show that $ {\mathfrak g}\left(A'\right)
$ is a subalgebra in $ {\mathfrak g}\left(A\right) $. Though, the
roots $ r_{i}\left(\alpha_{1}\right),\dots
,r_{i}\left(\alpha_{n}\right) $ still form a base in the geometric
sense, i.e. every $ \alpha\in\Delta $ is a linear combination $
\sum_{j=1}^{n} m_{j}r_{i}\left(\alpha_{j}\right) $, where all $
m_{j} $ are non-negative integers or all $ m_{j} $ are
non-positive integers. The set $ \Pi' $ obtained from $ \Pi $ by a
sequence of odd reflections, singular or regular, is called a {\em
geometric base.\/}
\end{remark}

\begin{lemma}
Suppose $\beta$ is a regular isotropic root, and let $A''$ and $X''_{\alpha}$, $Y''_{\alpha}$, $H''_{\alpha}$ denote the matrix and
generators obtained by reflecting twice at $\beta$.  Then by rescaling the rows of $A''$ and the generators $X''_{\alpha}$
accordingly we obtain the original matrix and generators.  Thus, $r_{\beta}\circ r_{\beta}\sim id$.
\end{lemma}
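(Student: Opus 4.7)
The plan is to directly compose $r_\beta$ with itself, tracking how generators, roots, and Cartan matrix entries transform, and to check that the output differs from the input only by the row-rescaling freedom built into the construction of $\mathfrak{g}(A)$. Regularity of $\beta=\alpha_k$ excludes the configuration $a_{ki}=0,\ a_{ik}\ne 0$, so for each $i\ne k$ one of two situations arises. If $a_{ik}=a_{ki}=0$, then $X'_i=X_i$, $Y'_i=Y_i$, $h'_i=h_i$, and $a'_{ik}=a'_{ki}=0$; the same formulas apply under the second reflection, so $X''_i=X_i$, $Y''_i=Y_i$, $h''_i=h_i$ at this index. The index $i=k$ is also trivial, since $X'_k=Y_k$, $Y'_k=X_k$, $h'_k=h_k$, and the second reflection swaps $X'_k$ and $Y'_k$ back. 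Thus the only substantive case is $a_{ki}\ne 0$.

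In this case $X'_i=[X_i,X_k]$ and $X'_k=Y_k$, so using the super-Jacobi identity,
\begin{equation*}
X''_i=[X'_i,X'_k]=\bigl[[X_i,X_k],Y_k\bigr]
  =[X_i,[X_k,Y_k]]-(-1)^{p(\alpha_i)p(\alpha_k)}[X_k,[X_i,Y_k]].
\end{equation*}
The second bracket vanishes because $[X_i,Y_k]=0$ for $i\ne k$, and $[X_k,Y_k]=h_k$, hence $X''_i=[X_i,h_k]=-a_{ki}X_i$. A symmetric computation gives $Y''_i=a_{ki}Y_i$, and therefore $h''_i=[X''_i,Y''_i]=-a_{ki}^{\,2}\,h_i$. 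Since $a_{ki}\ne 0$, each of $X''_i,Y''_i,h''_i$ is a nonzero scalar multiple of the corresponding original generator.

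For roots, $\alpha'_k=-\alpha_k$ and $\alpha'_i=\alpha_i+\alpha_k$; since $a'_{ki}=a_{ki}\ne 0$, the second reflection again adds $\alpha'_k=-\alpha_k$, yielding $\alpha''_i=\alpha_i$ (and $\alpha''_k=\alpha_k$). Consequently
\begin{equation*}
a''_{ij}=\alpha''_j(h''_i)=\alpha_j(-a_{ki}^{\,2}\,h_i)=-a_{ki}^{\,2}\,a_{ij},
\end{equation*}
so the $i$-th row of $A''$ is $-a_{ki}^{\,2}$ times the $i$-th row of $A$. Rescaling that row (and absorbing the scalars $-a_{ki}$ and $a_{ki}$ into $X''_i$ and $Y''_i$) recovers $A$ and the original generators; the $k$-th row and the rows with $a_{ik}=a_{ki}=0$ require no adjustment. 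The only real technical step is the super-Jacobi manipulation above; once the scalars $-a_{ki},\,a_{ki},\,-a_{ki}^{\,2}$ are extracted, matching $A''$ to $A$ after rescaling is immediate bookkeeping.
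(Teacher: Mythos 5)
Your proposal is correct and follows essentially the same route as the paper: the same three-case split ($i=k$; $a_{ik}=a_{ki}=0$; $a_{ki}\neq 0$, with regularity ruling out the mixed case) and the same key computation $[[X_i,X_k],Y_k]=-a_{ki}X_i$ via the super-Jacobi identity. You merely make explicit the scalars $a_{ki}$, $-a_{ki}^2$ on $Y''_i$, $h''_i$ and the resulting row rescaling of $A''$, which the paper leaves implicit.
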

\begin{proof}
First observe that $a'_{ki}=a_{ki}$ and $a_{ki}=0 \Leftrightarrow a_{ik}=0$.
Then there are three cases to check. First, $r_{\beta}(r_{\beta}(\beta))=r_{\beta}(-\beta)=\beta$ and
$r_{\beta}(r_{\beta}(X_{\beta}))=r_{\beta}(Y_{\beta})=X_{\beta}$.  Second, if $a_{ki}=0$ then
$r_{\beta}(r_{\beta}(\alpha))=r_{\beta}(\alpha)=\alpha$ and
$r_{\beta}(r_{\beta}(X_{\alpha}))=r_{\beta}(X_{\alpha})=X_{\alpha}$.  Finally, if $a_{ki} \neq 0$ then $a'_{ki} \neq 0$,
and we have $r_{\beta}(r_{\beta}(\alpha))=r_{\beta}(\alpha+\beta)=(\alpha+\beta)-\alpha=\alpha$ and
$r_{\beta}(r_{\beta}(X_{\alpha})=r_{\beta}([X_{\alpha},X_{\beta}])=[[X_{\alpha},X_{\beta}],Y_{\beta}]=-a_{ki}X_{\alpha}$.
\end{proof}

\begin{lemma} \label{lm240}\myLabel{lm240}\relax  Let $ A' $ be obtained from $ A $ by an odd reflection. If $ A $
is elemental (non-elemental), then $ A' $ is elemental (non-elemental).
\end{lemma}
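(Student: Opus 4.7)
The plan is a direct case analysis using the explicit formulas for the entries of $A'$ displayed just before Remark~\ref{rem1}. The key observation to start with is that row $k$ of $A'$ coincides with row $k$ of $A$: both have $a'_{kk}=a_{kk}=0$ and $a'_{kj}=a_{kj}$ for $j\neq k$. So whenever the zero row in question is row $k$ itself, both directions of the equivalence are immediate, and only rows $i\neq k$ require real work.

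For the forward direction I would assume row $i$ of $A$ is zero with $i\neq k$, so that in particular $a_{ij}=0$ for every $j$, including $a_{ik}=a_{ii}=0$. Inspecting the three branches of the formula for $a'_{ij}$, every surviving term is a product containing one of the factors $a_{ij}$ or $a_{ik}$, each of which vanishes; similarly $a'_{ik}=-a_{ki}a_{ik}=0$. Hence row $i$ of $A'$ is zero.

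For the converse I would assume row $l$ of $A'$ is zero with $l\neq k$ and split on the pair $(a_{lk},a_{kl})$. If both vanish we are in the trivial branch of the formula and row $l$ of $A'$ equals row $l$ of $A$, giving the conclusion. If both are nonzero then $a'_{lk}=-a_{kl}a_{lk}\neq 0$, contradicting the hypothesis, so this case is excluded. In the asymmetric case $a_{lk}\neq 0$, $a_{kl}=0$, the cross-terms in $a'_{lj}$ collapse to $a_{kj}a_{lk}$ (when $a_{kj}$ or $a_{jk}$ is nonzero) and to $0$ otherwise; vanishing of the whole row therefore forces $a_{kj}=0$ for every $j$, so row $k$ of $A$ is zero. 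In the other asymmetric case $a_{kl}\neq 0$, $a_{lk}=0$, the cross-terms collapse to $a_{kl}a_{lj}$, and the vanishing forces $a_{lj}=0$ for all $j\neq k$, so (together with $a_{lk}=0$) row $l$ of $A$ is zero.

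The argument is essentially bookkeeping; the only mild subtlety is verifying that the quadratic correction $a_{ki}a_{ik}$ appearing in the third branch of the formula does not interfere with the diagonal entry $a'_{ll}$ in the two asymmetric subcases — in each of them one of $a_{lk}$, $a_{kl}$ is zero, so that correction drops out and the analysis above is not disrupted. I do not anticipate any serious obstacle.
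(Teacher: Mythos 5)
Your proof is correct and follows essentially the same route as the paper: a direct verification from the reflection formulas, the paper working with the coroots $h'_i=c\left(a_{ik}h_k+a_{ki}h_i\right)$ where you work entry-by-entry with the $a'_{ij}$. Your case analysis is if anything slightly more careful in the asymmetric subcase $a_{lk}\neq 0$, $a_{kl}=0$, where the zero row of $A$ turns out to be row $k$ rather than row $l$ --- a possibility the paper's converse direction passes over.
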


\begin{proof} If the $ i $-th row of $ A $ is zero, then an odd reflection
does not change $ \alpha_{i} $ and $ h_{i} $ and the $ i $-th row of $ A' $ is also zero. Suppose
the $ i $-th row of $ A' $ is zero. Then $ \alpha'_{j}\left(h'_{i}\right)=0 $ for all $ j $. Let $ A' $ be
obtained from $ A $ by an odd reflection $ r_{j} $. Then if $ a_{ji}\not=0 $
\begin{equation}
h_{i}^{'}=c\left(a_{ij}h_{j}+a_{ji}h_{i}\right)=c a_{ji}h_{i}
\notag\end{equation}
for some non-zero $ c $. If $ a_{ji}=0 $, then $ h'_{i}=h_{i} $. Any way, the $ i $-th row of $ A $
is zero.\end{proof}

Note that the notion of finite
growth does not depend on choice of a base as follows from

\begin{lemma} \label{lm2}\myLabel{lm2}\relax  Let $ \Pi' $ be obtained from $ \Pi $ by an odd reflection. Then $ \dim
{\mathfrak g}'_{n}\leq\dim  {\mathfrak g}_{-2n}\oplus\dots \oplus{\mathfrak g}_{2n} $. In particular, if $ {\mathfrak g}\left(A\right) $ is of finite growth, then
$ {\mathfrak g}\left(A'\right) $ is of finite growth.
\end{lemma}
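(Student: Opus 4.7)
The plan is to compare the two principal gradings root by root. Since $\mathfrak{g}'_n$ is the sum of the root spaces $\mathfrak{g}'_\alpha$ over roots $\alpha$ of new height $n$ (plus the Cartan for $n=0$), and each $\mathfrak{g}'_\alpha$ sits inside $\mathfrak{g}_\alpha$ (with equality in the regular case, as a subspace in the singular case, by Remark~\ref{rem1}), it suffices to show that every root $\alpha$ of $\mathfrak{g}(A')$ whose new height $n:=\operatorname{ht}'(\alpha)$ is nonzero has old height $m:=\operatorname{ht}(\alpha)$ satisfying $|m|\leq 2|n|$.

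The core of the argument is a direct change-of-basis calculation. Let $k$ be the reflected index and put $S=\{i\neq k : a_{ik}\neq 0 \text{ or } a_{ki}\neq 0\}$. Using $\alpha'_k=-\alpha_k$, $\alpha'_i=\alpha_i+\alpha_k$ for $i\in S$, and $\alpha'_i=\alpha_i$ otherwise, substituting $\alpha=\sum c_i\alpha_i=\sum c'_i\alpha'_i$ yields $c_i=c'_i$ for $i\neq k$ and $c_k=\sum_{i\in S}c'_i-c'_k$. Summing gives
\[ m-n \;=\; \sum_{i\in S} c'_i \;-\; 2\,c'_k. \]

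Next I would invoke the positivity dichotomy $(\Delta^+)'=\Delta^+\cup\{-\alpha_k\}\setminus\{\alpha_k\}$ (formula~\eqref{equ2} in the regular case, and in the singular case the geometric-base property of Remark~\ref{rem1} still gives that every root has all coefficients of one sign in $\Pi'$): any root with $n>0$ is either positive in $\Pi$ or equals $-\alpha_k$. The latter gives $(m,n)=(-1,1)$, which satisfies the bound. In the former case all $c_i,c'_i\geq 0$, and the displayed identity yields
\[ m-n \;\leq\; \sum_{i\in S} c'_i \;\leq\; n-c'_k \;\leq\; n, \qquad m-n \;\geq\; -2c'_k \;\geq\; -2n, \]
so $-n\leq m\leq 2n$. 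The case $n<0$ follows by replacing $\alpha$ by $-\alpha$. Hence $\mathfrak{g}'_n\subseteq \bigoplus_{j=-2|n|}^{2|n|}\mathfrak{g}_j$, and the dimension inequality follows.

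The finite-growth corollary is then immediate: if $\dim\mathfrak{g}_j\leq P(|j|)$ for some polynomial $P$, the bound $\dim\mathfrak{g}'_n\leq\sum_{j=-2|n|}^{2|n|}P(|j|)$ is again polynomial in $|n|$. The only real subtlety is to carefully reconcile the regular and singular cases in the positivity dichotomy — one must know that every root has coefficients all of one sign with respect to the new base, which is exactly the geometric-base property of Remark~\ref{rem1}. Once this is in place, everything else is the change-of-basis bookkeeping above.
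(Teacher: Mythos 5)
Your proof is correct. Note that the paper states Lemma~\ref{lm2} with no proof at all, so there is nothing to compare against; you have supplied the argument the authors evidently considered routine. Your route is the natural one: since $c_i=c'_i$ for $i\neq k$ and $c_k=\sum_{i\in S}c'_i-c'_k$, the identity $m-n=\sum_{i\in S}c'_i-2c'_k$ together with the one-sign property of the coefficients in both bases pins the old height to $-n\leq m\leq 2n$ (for $n>0$), and the exceptional root $-\alpha_k$ is handled separately. You correctly identified the only delicate point, namely that in the singular case $\mathfrak{g}(A')$ is merely a subalgebra and one must fall back on the geometric-base property of Remark~\ref{rem1} (rather than the isomorphism and formula~\eqref{equ2}) to get that every root has coefficients of one sign in $\Pi'$; with $\mathfrak{g}'_\alpha\subseteq\mathfrak{g}_\alpha$ the dimension count then goes through verbatim, and the finite-growth conclusion is immediate. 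One cosmetic remark: the bound you actually prove, $-n\leq m\leq 2n$, is sharper than the stated $-2n\leq m\leq 2n$, which does no harm.
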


\begin{theorem} \label{cor32}\myLabel{cor32}\relax  If $ {\mathfrak g}\left(A\right) $ has finite growth and A is
elemental, then matrix $A$ and any matrix $ A' $ obtained from A
by a sequence of odd reflections (singular or regular) satisfies
the matrix conditions of Lemma~\ref{lm23}.
\end{theorem}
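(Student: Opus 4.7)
The plan is a two-stage synthesis of results already in hand. Stage one handles $A$ itself: since $A$ is elemental and $\mathfrak{g}(A)$ has finite growth, Theorem~\ref{th1} gives that $\mathfrak{g}(A)$ is integrable, and then Lemma~\ref{lm23} delivers the four matrix conditions directly, provided $A$ is indecomposable with $n\geq 2$. If $A$ is decomposable, I will break $I=I_{1}\sqcup\cdots\sqcup I_{k}$ into indecomposable components; each $A_{I_{s}}$ inherits elementality (a zero row of $A_{I_{s}}$ would extend to a zero row of $A$ since cross-block entries vanish) and finite growth (by Lemma~\ref{lm21}), so Lemma~\ref{lm23} applies to each component of size $\geq 2$, while any singleton block $\{i\}$ forces $a_{ii}\neq 0$ (else the $i$-th row of $A$ is zero), and after rescaling $a_{ii}=2$ the four conditions hold trivially. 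Since conditions (1)--(3) are row-local and condition (4) is vacuous across distinct blocks, the per-block conditions assemble into the conditions for $A$ as a whole.

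Stage two handles any $A'$ obtained from $A$ by a sequence of odd reflections. I will induct on the length of the sequence, with the base case handled by stage one. For the inductive step, suppose $A''$ already satisfies the hypotheses of the theorem and $A'$ is obtained from $A''$ by a single odd reflection: Lemma~\ref{lm2} transfers finite growth from $\mathfrak{g}(A'')$ to $\mathfrak{g}(A')$, and Lemma~\ref{lm240} transfers elementality from $A''$ to $A'$. Stage one then applies directly to $A'$ to produce the matrix conditions. The rescaling of rows implicit in Lemma~\ref{lm23} is compatible with the rescaling prescribed after each odd reflection, so it does not interfere.

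The main subtlety I expect is the singular case, where $\mathfrak{g}(A')$ is only a subalgebra of $\mathfrak{g}(A)$ rather than isomorphic to it, so finite growth cannot simply be transported along an isomorphism. However, Lemma~\ref{lm2} is phrased via a dimension bound $\dim\mathfrak{g}'_{n}\leq\dim(\mathfrak{g}_{-2n}\oplus\cdots\oplus\mathfrak{g}_{2n})$ and thus applies uniformly to singular and regular reflections, and the row computation underlying Lemma~\ref{lm240} is likewise indifferent to the distinction. Consequently the proof amounts to a clean combination of preceding results, and I do not anticipate any genuine obstacle beyond this bookkeeping; the real work has already been done in Theorem~\ref{th1} and in the invariance lemmas for odd reflections.
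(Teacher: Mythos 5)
Your proposal is correct and follows essentially the same route as the paper, whose proof is the one-line observation that the statement follows from Lemma~\ref{lm2}, Lemma~\ref{lm23} and Theorem~\ref{th1}; your induction on the length of the reflection sequence, with Lemma~\ref{lm2} transporting finite growth and Lemma~\ref{lm240} transporting elementality, is exactly the intended bookkeeping. Your extra care with decomposable matrices and singleton blocks addresses a hypothesis of Lemma~\ref{lm23} that the paper silently glosses over, but it does not change the argument.
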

\begin{proof}
This follows immediately from Lemma~\ref{lm2}, Lemma~\ref{lm23} and Theorem~\ref{th1}.
\end{proof}
 \begin{definition}\label{def1}\myLabel{def1}\relax
The matrix $ A $ is called a {\em generalized Cartan matrix\/} if
the following conditions hold
\begin{enumerate}
\item for any $ i\in I $ either $ a_{i i}=0 $ or $ a_{i i}=2 $;
\item if $ a_{i i}=0 $ then $ p\left(i\right)=1 $; \item if $ a_{i
i}=2 $ then $ a_{ij}\in2^{p\left(i\right)}{\mathbb Z}_{-} $; \item
if $ a_{ij}\not=0 $ then $ a_{ji}\not=0 $.
\end{enumerate}
\end{definition}

Note that the above conditions are just a little bit stronger than
conditions of Lemma~\ref{lm23}. More precisely

\begin{lemma} \label{lm27}\myLabel{lm27}\relax  A matrix satisfying the conditions of Lemma
~\ref{lm23} is a generalized Cartan matrix if and only if $\Pi$ does
not contain a singular root.

\end{lemma}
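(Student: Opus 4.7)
The plan is to observe that conditions (1)--(3) of Definition~\ref{def1} are literally the same as conditions (1)--(3) of Lemma~\ref{lm23}, so the entire content of this lemma reduces to comparing condition (4) in each case with the notion of a singular simple root.

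First I would unpack the definition of singular: the simple root $\alpha_i$ is singular iff there exists some $j \neq i$ with $a_{ij}=0$ but $a_{ji}\ne 0$. Hence $\Pi$ contains no singular root iff for every pair $i\ne j$ one has $a_{ij}=0\Rightarrow a_{ji}=0$, or equivalently (contrapositive) $a_{ji}\ne 0\Rightarrow a_{ij}\ne 0$ for all $i\ne j$. This is precisely condition (4) of Definition~\ref{def1}.

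Next I would check both directions. If $A$ satisfies Lemma~\ref{lm23} and $\Pi$ has no singular root, the equivalence above gives condition (4) of the GCM definition; combined with the shared conditions (1)--(3), this shows $A$ is a generalized Cartan matrix. Conversely, if $A$ is a GCM, condition (4) of Definition~\ref{def1} holds, and the same equivalence shows that no simple root is singular.

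There is no real obstacle; the only minor point to flag is that condition (4) of Lemma~\ref{lm23} is strictly weaker than condition (4) of Definition~\ref{def1} (it only constrains the asymmetric case when $a_{ii}=0$), so it is automatically implied by the GCM condition and plays no role in the argument beyond being consistent with it.
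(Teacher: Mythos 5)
Your proof is correct and matches the paper's (implicit) reasoning: the paper states Lemma~\ref{lm27} without proof precisely because, as you observe, conditions (1)--(3) coincide in Lemma~\ref{lm23} and Definition~\ref{def1}, and condition (4) of Definition~\ref{def1} is verbatim the negation of ``some simple root is singular.'' Your side remark that condition (4) of Lemma~\ref{lm23} is vacuously implied by the GCM condition is also accurate and correctly flagged as irrelevant to the equivalence.
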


We call $ {\mathfrak g}\left(A\right) $ a {\em regular Kac-Moody
superalgebra\/} if $ A $ itself and any matrix obtained from $ A $
by a sequence of odd reflections is a generalized Cartan matrix.

\begin{corollary}
If $A$ is symmetrizable with a zero on the diagonal and $\mathfrak
{g}(A)$ has finite growth, then ${\mathfrak g} (A)$ is regular Kac-Moody.
\end{corollary}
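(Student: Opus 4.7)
The plan is to combine Theorem~\ref{cor32}, Lemma~\ref{lm27}, and an induction on the length of a sequence of odd reflections, using that symmetrizability of the Cartan matrix is preserved under regular odd reflections.

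First, since $A$ is symmetrizable, there is an invertible diagonal matrix $D$ with $DA$ symmetric; in particular $a_{ij}=0$ iff $a_{ji}=0$, so no simple root of $\Pi$ is singular. By Theorem~\ref{cor32}, $A$ satisfies the matrix conditions of Lemma~\ref{lm23}, and therefore by Lemma~\ref{lm27} it is a generalized Cartan matrix. This handles the base case.

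For the inductive step, suppose $A_n$ is symmetrizable, has finite growth, and is a generalized Cartan matrix, and let $A_{n+1}$ be obtained from $A_n$ by an odd reflection at some isotropic simple root $\alpha$. Because $A_n$ has no singular root, $\alpha$ is regular, so $\mathfrak{g}(A_{n+1})\cong \mathfrak{g}(A_n)$. Symmetrizability of a contragredient Cartan matrix is equivalent to the existence of a non-degenerate invariant supersymmetric bilinear form on the corresponding superalgebra; such a form on $\mathfrak{g}(A_n)\cong \mathfrak{g}(A_{n+1})$ therefore exists, and evaluating it on the pairs $(h'_i,h'_j)$ given by the transformation formulas of Section~4 yields a symmetrizing datum for $A_{n+1}$. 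Hence $A_{n+1}$ is symmetrizable, and by Lemma~\ref{lm2} it still has finite growth, so Theorem~\ref{cor32} together with Lemma~\ref{lm27} again identifies it as a generalized Cartan matrix.

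By induction, every matrix in the odd-reflection orbit of $A$ is a generalized Cartan matrix, which is precisely the definition of $\mathfrak{g}(A)$ being regular Kac-Moody. I expect the main obstacle to be verifying that the intrinsic invariant form on $\mathfrak{g}(A_n)$ really does produce a symmetrizing matrix for $A_{n+1}$: one must check that after rescaling the rows (as arranged in the paper) the entries $(h'_i,h'_j)$ differ from $a'_{ij}$ only by a symmetric diagonal rescaling, which reduces to a short case analysis matching the three-case definition of $a'_{ij}$ displayed before Remark~\ref{rem1}.
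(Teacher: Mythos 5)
Your proof is correct and follows essentially the same route as the paper, which simply cites Theorem~\ref{cor32} and Lemma~\ref{lm27}; the only added content is that you make explicit the (standard) fact that symmetrizability is preserved under regular odd reflections, which is exactly the point the paper leaves implicit when it asserts that each reflected base has no singular root. No gaps.
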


\begin{proof}
This is an immediate consequence of Theorem~\ref{cor32} and Lemma~\ref{lm27}.
\end{proof}

\section{principal roots}

We call $ \alpha\in\Delta $ a {\em principal\/} root if $ \alpha $
belongs to some base $ \Pi' $ obtained from an initial base $ \Pi
$ by a sequence of regular odd reflections. A principal root
defines a subalgebra of $ {\mathfrak g} $ generated by $
X_{\alpha},Y_{\alpha} $ and $
h_{\alpha}=\left[X_{\alpha},Y_{\alpha}\right] $. We choose $
X_{\alpha},Y_{\alpha} $ and $ h_{\alpha} $ in such a way that $
\alpha\left(h_{\alpha}\right)=0 $ or 2. So the choice of these
generators is unique up to proportionality. For two roots $ \alpha
$ and $ \beta $ put $ a_{\alpha\beta}=\beta\left(h_{\alpha}\right)
$.

We define the subset $ \Pi_{0} \subseteq \Delta $ as follows. Let
$ \alpha\in\Pi_{0} $ if $ \alpha $ is even and belongs to some
geometric base, or $ \alpha=2\beta $, where $ \beta $ is odd,
non-isotropic and belongs to some geometric base. Obviously, $
\Pi_{0}\subset\Delta_{0}^{+} $. For a Lie superalgebra $l$ we
denote by $l'$ its commutator $[l,l]$

\begin{lemma} \label{lm4}\myLabel{lm4}\relax Let $S$ be a
  subset of $\Pi_0$, and let ${\mathfrak g}_S$ be the subalgebra of ${\mathfrak g}$ generated by $
  X_{\beta},Y_{\beta} $ for all $\beta \in S$ and $ h_{\beta}
  =[X_{\beta}, Y_{\beta}]$.
These elements satisfy the following
relations for a certain matrix $B$:
$$[h_{\beta}, X_{\gamma}]=b_{\beta , \gamma} X_{\beta},
[h_{\beta}, Y_{\gamma}]=-b_{\beta , \gamma} Y_{\beta}, [X_{\beta},
Y_{\gamma}]={\delta}_{\beta , \gamma} h_{\beta},\text{ for } \beta,\gamma\in{S}.
$$
There exists a surjective homomorphism ${\mathfrak g}_S \to
{\mathfrak
  g}'(B) /c$ where $c$ is some central subalgebra of the Cartan
subalgebra in ${\mathfrak g}' (B)=[{\mathfrak g}(B), {\mathfrak g} (B)]$.
\end{lemma}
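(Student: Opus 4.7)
The plan is to set $b_{\beta\gamma} := \gamma(h_\beta)$, verify that $\{X_\beta, Y_\beta, h_\beta\}_{\beta \in S}$ satisfy relations~\eqref{equ1} for the matrix $B = (b_{\beta\gamma})$, and then invoke the universal property of $\bar{\mathfrak{g}}(B)$ to obtain the asserted surjection, identifying the central subalgebra $c$ as the kernel on the Cartan part.

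The weight relations $[h_\beta, X_\gamma] = b_{\beta\gamma} X_\gamma$ and $[h_\beta, Y_\gamma] = -b_{\beta\gamma} Y_\gamma$ follow immediately from $X_\gamma \in \mathfrak{g}_\gamma$ and $Y_\gamma \in \mathfrak{g}_{-\gamma}$, and $[X_\beta, Y_\beta] = h_\beta$ is the definition of $h_\beta$. The substantive step is the vanishing $[X_\beta, Y_\gamma] = 0$ for distinct $\beta, \gamma \in S$. I would prove this by passing to a geometric base $\Pi_\beta$ in which $\beta$ (or, in the doubled case $\beta = 2\delta$, the odd non-isotropic root $\delta$) is a simple root, so that $\{X_\beta, Y_\beta, h_\beta\}$ generates a copy of $sl(2)$ or $\operatorname{osp}(1|2)$, which I denote $\mathfrak{k}_\beta$. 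Since $\gamma \in \Delta_0^+$ is base-independent by~\eqref{equ2}, the expansion of $\gamma$ in $\Pi_\beta$ has non-negative integer coefficients; combined with the integrability constraints of Lemma~\ref{lm23} applied to the $\mathfrak{k}_\beta$-module generated by $Y_\gamma$, this forces $[X_\beta, Y_\gamma]$ into a weight space that must vanish. The doubled-$\operatorname{osp}(1|2)$ case, where the weight jumps by $2$, is the main technical subtlety.

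Once~\eqref{equ1} holds, the universal property of $\bar{\mathfrak{g}}(B)$ gives a surjective homomorphism $\bar{\mathfrak{g}}(B) \twoheadrightarrow \mathfrak{g}_S$ sending primed generators to unprimed (and any $\operatorname{cork}(B)$ extras to zero). By maximality of $\mathcal{I}_B$, this factors through $\mathfrak{g}(B)$; restricting to the derived subalgebra, the remaining Cartan kernel is a subspace $c \subseteq \operatorname{span}\{h'_\beta\}$ measuring the linear dependencies among the $h_\beta$'s inside $\mathfrak{h}$. Any such dependency $\sum c_\beta h_\beta = 0$ satisfies $\sum c_\beta b_{\beta\gamma} = 0$ for every $\gamma \in S$ (apply $\operatorname{ad}$ of each $X_\gamma$), so $c$ lies in the center of $\mathfrak{g}'(B)$; identifying source and target through this quotient produces the claimed surjection relating $\mathfrak{g}_S$ and $\mathfrak{g}'(B)/c$. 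The principal obstacle is the module-theoretic vanishing step in the previous paragraph; the rest is a standard universal-property argument with Cartan bookkeeping.
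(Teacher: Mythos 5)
Your reduction of the first claim to the vanishing of $[X_{\beta},Y_{\gamma}]$ for distinct $\beta,\gamma\in S$ is right, but the argument you give for that vanishing does not close. Knowing that $\beta$ (or $\beta/2$) is simple in some geometric base $\Pi_{\beta}$ and that $\gamma$ is positive with respect to $\Pi_{\beta}$ is not enough: already for $sl(3)$ with $\beta=\alpha_{1}$ and $\gamma=\alpha_{1}+\alpha_{2}$ one has $\gamma\in\Delta_{0}^{+}$ positive in the base containing $\beta$, the $\mathfrak{k}_{\beta}$-module generated by $Y_{\gamma}$ is a perfectly integrable two-dimensional $sl(2)$-module, and yet $[X_{\beta},Y_{\gamma}]=\pm Y_{\alpha_{2}}\neq 0$. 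What saves the lemma is that $\gamma$, too, lies in $\Pi_{0}$, and your argument never uses this. The paper's proof is purely combinatorial: if $\beta-\gamma$ were a root it would be even, and whichever of $\pm(\beta-\gamma)$ is positive exhibits $\beta$ or $\gamma$ as a sum of two positive roots not proportional to it, which is impossible for an element of $\Pi_{0}$ because that element (or its half) belongs to some geometric base. This also removes your appeal to integrability and Lemma~\ref{lm23}, which imports a finite-growth hypothesis the lemma does not carry.

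The second half also contains a wrong step. From the surjection $j\colon\bar{\mathfrak g}(B)\to{\mathfrak g}_{S}$ you assert that ``by maximality of $\mathcal{I}_{B}$ this factors through ${\mathfrak g}(B)$.'' Maximality gives the opposite containment: it shows that any ideal meeting the Cartan subalgebra trivially lies inside $\mathcal{I}_{B}$, i.e.\ (after decomposing $\ker j=\mathcal{J}^{+}\oplus\mathcal{J}^{0}\oplus\mathcal{J}^{-}$, which itself requires the auxiliary grading element $h$ with $\beta(h)>0$ for all $\beta\in S$ that the paper introduces) that $\mathcal{J}^{\pm}\subset\mathcal{I}_{B}^{\pm}$, hence $\ker j\subset\mathcal{I}_{B}+\mathcal{J}^{0}$. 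That containment is exactly what yields the surjection in the direction the lemma asserts, ${\mathfrak g}_{S}\to{\mathfrak g}'(B)/c$ with $c$ the image of $\mathcal{J}^{0}$. Factoring $j$ through ${\mathfrak g}(B)$ would instead require $\mathcal{I}_{B}\subset\ker j$, i.e.\ that the images of the generators inside ${\mathfrak g}$ satisfy all relations of ${\mathfrak g}(B)$; that is not automatic and would upgrade the lemma to an isomorphism ${\mathfrak g}_{S}\cong{\mathfrak g}(B)/c$, which is not what is claimed. Your description of $c$ as recording the linear dependencies among the $h_{\beta}$ and your check that it is central are fine.
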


\begin{proof} In order to check the relations it suffices to show that
$ \beta-\gamma\notin\Delta $ for any $ \beta,\gamma\in\Pi_{0} $.
Indeed, assume that $ \alpha=\beta-\gamma $. Then $
\alpha\in\Delta_{0} $ and without loss of generality we may assume
that $ \alpha $ is positive. Thus, we have $ \beta=\alpha+\gamma$.
On the other hand, there exists a geometric base $ \Pi' $ such
that either $ \beta\in\Pi' $ or $ \frac{\beta}{2}\in\Pi' $.
Anyway, $ \beta $ can not be decomposed into a sum of some
positive roots not proportional to itself. Contradiction.

Now let us prove the second statement. There exists a surjective
homomorphism $j: \bar{\mathfrak g}'(B) \to {\mathfrak g}_S$. Let
$i: \bar{\mathfrak g}'(B) \to {\mathfrak g}'(B)$ be the standard
projection. Let $\mathcal{I},\mathcal{J}$ be the kernels of $i,j$ respectively. Then
(see [5]) $\mathcal{I}={\mathcal{I}}^+ \oplus {\mathcal{I}}^-$, where ${\mathcal{I}}^{\pm}$ is the
unique maximal ideal in $\bar {\mathfrak n} ^{\pm}$. We claim that
$\mathcal{J}={\mathcal{J}}^+ \oplus {\mathcal{J}}^0 \oplus {\mathcal{J}}^-$, where ${\mathcal{J}}^{\pm}$ is an ideal
in $\bar {\mathfrak n} ^{\pm}$ and ${\mathcal{J}}^0$ is some ideal in the
Cartan subalgebra of $\bar{\mathfrak
  g}'(B)$. Indeed, let us choose $h\in \mathfrak h$ such that $\beta
(h)>0$ for all $\beta \in S$ and $\bar {h}$ in the Cartan
subalgebra of $\bar{\mathfrak g}(B)$ such that $\beta (\bar
{h})=\beta (h)$ for all $\beta \in S$.  Then one can
extend $j$ to the surjection
$$
\mathbb C \bar{h} +\bar{\mathfrak g}'(B) \to \mathbb C h
+{\mathfrak g}_S
$$
by putting $j(\bar h)=h$. If $h\in{\mathfrak g}_S$ we take $\bar h \in j^{-1}(h)$,
and then we do not need to extend $j$ since it is already defined on $\bar{h}$.
Since $\mathcal {J}$ is also an ideal in $ \mathbb C \bar{h} +\bar{\mathfrak
g}'(B)$, we have the decomposition $\mathcal{J}=\mathcal{J}^+ \oplus \mathcal{J}^0 \oplus \mathcal{J}^-$,
where $\mathcal{J}^{\pm}$ is the sum of the eigenspaces of ad $(\bar h)$
with  positive (negative) eigenvalues and $\mathcal{J}^0$ is the zero
eigenspace, hence $\mathcal{J}^0$ is an ideal in the Cartan subalgebra. Now
the maximality of $\mathcal{I}^{\pm}$ implies $\mathcal{J}^{\pm} \subset \mathcal{I}^{\pm}$. Let
$c=i(\mathcal{J}^0)$. obviously, $c$ commutes with all $X_{\beta},
Y_{\beta}$, hence $c$ is in the center of ${\mathfrak g}'(B)$.
Then clearly the surjective homomorphism ${\mathfrak g}_S \to
{\mathfrak
  g}'(B) /c$ exists.

\end{proof}
\begin{corollary} \label{cor1}\myLabel{cor1}\relax  If a Lie
  superalgebra $ {\mathfrak g}\left(A\right) $ has finite growth, then
  for any finite subset $S$ of ${\Pi}_0$
the Lie algebra $ {\mathfrak g}\left(B\right) $ also has finite
growth. In particular, $ B $ is an even generalized Cartan matrix.

\end{corollary}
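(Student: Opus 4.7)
The plan is to transfer finite growth from $\mathfrak{g}(A)$ down to the subalgebra $\mathfrak{g}_S$, and then push it through the surjection of Lemma~\ref{lm4} onto $\mathfrak{g}'(B)/c$, using that $c$ will turn out to be finite-dimensional.

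First I would equip $\mathfrak{g}_S$ with its intrinsic grading $\deg' X_\beta = 1$, $\deg' Y_\beta = -1$ for $\beta \in S$, which is the grading that the surjection of Lemma~\ref{lm4} sends to the principal grading of $\mathfrak{g}(B)$. The relations in Lemma~\ref{lm4} make $\mathfrak{g}_S$ a quotient of $\bar{\mathfrak{g}}(B)$, hence endow it with a triangular decomposition, so for $d>0$ every weight vector in $(\mathfrak{g}_S)'_d$ lies in a root space $\mathfrak{g}(A)_{\tilde\gamma}$ with $\tilde\gamma = \sum_{\beta\in S} p_\beta \beta$, $p_\beta \in \mathbb{Z}_{\geq 0}$, $\sum p_\beta = d$. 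Since $\Delta_0^+$ is independent of the base by \eqref{equ2}, each $\beta \in S \subseteq \Pi_0 \subseteq \Delta_0^+$ has a well-defined principal height $h(\beta) \geq 1$; setting $N = \max_{\beta\in S} h(\beta) < \infty$, the principal height of such $\tilde\gamma$ lies in $[d, dN]$, whence
\[
\dim (\mathfrak{g}_S)'_d \;\leq\; \sum_{k=d}^{dN} \dim \mathfrak{g}(A)_k,
\]
which is polynomial in $d$ by assumption. The case $d<0$ is symmetric, and $(\mathfrak{g}_S)'_0 \subseteq \mathfrak{h}$.

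The graded surjection $\mathfrak{g}_S \twoheadrightarrow \mathfrak{g}'(B)/c$ from Lemma~\ref{lm4} then passes finite growth to $\mathfrak{g}'(B)/c$. Since $c$ sits in the Cartan subalgebra of $\mathfrak{g}(B)$, of finite dimension $|S| + \operatorname{cork}(B)$, it is finite-dimensional; therefore $\mathfrak{g}'(B)$, and with it $\mathfrak{g}(B) = \mathfrak{g}'(B) + \mathfrak{h}_B$, has finite growth. For the ``in particular'' part, $\mathfrak{g}(B)$ is purely even because $\Pi_0 \subseteq \Delta_0$; the normalization $b_{\beta\beta} \in \{0,2\}$ is forced to give $b_{\beta\beta}=2$, since Lemma~\ref{lm23}(2), applied in a geometric base containing $\beta$ (or containing $\gamma$ when $\beta=2\gamma$), rules out $b_{\beta\beta}=0$ when $\beta$ is even. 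Each indecomposable block of $B$ is then elemental, so Theorem~\ref{th1} together with Lemma~\ref{lm23} applied block by block yields conditions (1)--(3) of Definition~\ref{def1}, while condition (4) follows from Lemma~\ref{lm23}(4) in the presence of $b_{ii}=2$.

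The principal obstacle is Step~1: one must verify that the grading on $\mathfrak{g}_S$ natural to the surjection of Lemma~\ref{lm4} is polynomially controlled by the principal grading of $\mathfrak{g}(A)$, which is precisely what the height bound $[d,dN]$ accomplishes. After that, the remaining steps amount to finite-dimensional corrections and a direct appeal to earlier results.
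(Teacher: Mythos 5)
Your proof is correct and takes essentially the route the paper intends: the paper states this corollary without proof as an immediate consequence of the surjection $\mathfrak{g}_S \twoheadrightarrow \mathfrak{g}'(B)/c$ of Lemma~\ref{lm4}, and your argument is exactly the filling-in of that step, with the height bound $[d,dN]$ (valid because $\Pi_0\subseteq\Delta_0^+$ and $\Delta_0^+$ is base-independent by~\eqref{equ2}) supplying the grading comparison that the paper leaves implicit, in the same spirit as its Lemma~\ref{lm2}. The concluding reduction of the ``in particular'' clause to Theorem~\ref{th1} and Lemma~\ref{lm23} likewise matches the paper's framework.
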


\begin{remark}
We denote by $\mathfrak{g}(\overline{B})$ the algebra given by all roots in $\Pi_0$.
\end{remark}

Generally speaking it is possible that $ \Pi_{0} $ is infinite, but
this is certainly not possible if $ \mathfrak{g}(A) $ is regular and
has finite growth.

\begin{lemma} \label{lm14}\myLabel{lm14}\relax  If $ {\mathfrak g}\left(A\right) $ is a regular Kac-Moody superalgebra of finite
growth, then $ \Pi_{0} $ is finite. Moreover, $ |\Pi_{0}|\leq2n $.

\end{lemma}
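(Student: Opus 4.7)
The plan is to bound the cardinality of every finite subset $S\subset\Pi_0$ by $2n$; this simultaneously forces $\Pi_0$ to be finite and yields the stated bound $|\Pi_0|\leq 2n$.

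First I would apply Corollary~\ref{cor1} to a finite subset $S\subset\Pi_0$: the associated matrix $B=(b_{\beta\gamma})_{\beta,\gamma\in S}$ is an even generalized Cartan matrix and $\mathfrak g(B)$ has finite growth. By the classical Kac--Vinberg classification of finite-growth Kac--Moody algebras, every indecomposable component of $B$ is then of finite type or of affine type. Let $m=|S|$ and let $q$ be the number of indecomposable affine components. Each finite-type component contributes a nondegenerate Cartan matrix, while each affine component contributes a one-dimensional kernel, so $\dim\ker B=q$. Moreover, every affine Kac--Moody algebra has at least two simple roots, giving $2q\leq m$.

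Next I would exploit that all $\beta\in S$ lie in the $n$-dimensional $\mathbb Q$-subspace $V=\mathbb Q\alpha_1\oplus\cdots\oplus\mathbb Q\alpha_n\subset\mathfrak h^*$, which is well-defined since $\alpha_1,\dots,\alpha_n$ are linearly independent by construction. For any linear relation $\sum_{\beta\in S} c_\beta\beta=0$ in $V$, evaluating at each $h_\gamma$ for $\gamma\in S$ gives $\sum_\beta c_\beta b_{\gamma\beta}=0$, i.e.\ the column vector $c$ lies in $\ker B$. Thus the space of linear relations among the roots in $S$ injects into $\ker B$, so the rank of $S$ in $V$ is at least $m-q$. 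Since that rank is at most $n$, we get $m-q\leq n$. Combined with $2q\leq m$ this yields $m\leq n+m/2$, hence $m\leq 2n$.

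Because this bound is uniform over all finite subsets $S\subset\Pi_0$, the set $\Pi_0$ itself has at most $2n$ elements, proving both finiteness and the explicit estimate. The only genuinely delicate step is justifying the injection ``relations among $\beta\in S$ into $\ker B$'' — it is not an isomorphism in general, but the inequality $\dim\{\text{relations}\}\leq\dim\ker B=q$ is all that the counting requires. Everything else is a direct application of Corollary~\ref{cor1} together with the fact that finite-growth even GCMs are exhausted by the finite and affine types.
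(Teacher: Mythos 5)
Your proposal is correct and follows essentially the same route as the paper: apply Corollary~\ref{cor1} to a finite subset $S$, invoke the classification of finite-growth even generalized Cartan matrices to see that $B$ decomposes into finite and affine blocks, and combine the inequality $\mathrm{rank}\,B\geq |S|-q$ (with $q$ the number of affine blocks, each of size at least $2$) with the bound $\mathrm{rank}\,B\leq n$ coming from the roots lying in the $n$-dimensional span of the simple roots. Your explicit justification of the rank bound via the injection of linear relations into $\ker B$, and of $2q\leq|S|$, merely fills in steps the paper leaves implicit.
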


\begin{proof}
Since all odd reflections involved are regular, $
a_{\alpha\alpha}=2 $ for any $ \alpha\in\Pi_{0} $. For any finite
subset $S \subset \Pi_{0}$ the  corresponding matrix $B$ has
finite growth, therefore it consists of affine and finite blocks.
The rank of $B$ is not bigger than $n$, on the other hand it is
bounded by the size of $B$ minus the number of affine blocks. That
implies $ |\Pi_{0}|\leq2n $.\end{proof}

\section{Regular Case}

We have the following results from [1].

\begin{lemma} If the matrix $A$ is symmetrizable, irreducible and has a
  zero on the diagonal,
then $\mathfrak {g} (A)$ is regular Kac-Moody if and only if $\mathfrak{g}(A)$ has
finite growth.
\end{lemma}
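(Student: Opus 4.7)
The plan splits the equivalence into its two implications, of which one is already in hand. The forward direction, finite growth implies regular Kac-Moody, is precisely the content of the Corollary that follows Theorem~\ref{cor32}: symmetrizability plus a zero on the diagonal plus finite growth forces $A$ and all its odd-reflection iterates to satisfy the generalized Cartan matrix conditions of Definition~\ref{def1}. So nothing new is required for this direction.

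For the reverse direction, the strategy is to invoke the classification carried out by the first author in [1]. That paper gives a complete, explicit list of all indecomposable matrices with a zero on the main diagonal for which $A$ and every matrix obtained from $A$ by a sequence of odd reflections is a generalized Cartan matrix, i.e.\ every regular Kac-Moody matrix with a zero on the diagonal. The central output of [1] is that each matrix on this list defines a contragredient Lie superalgebra of finite growth, with one exception: a distinguished three-parameter family of $3\times 3$ matrices. The proof will therefore reduce to the observation that this exceptional family is non-symmetrizable, so under our symmetrizability hypothesis $A$ must lie in the "good" part of the list.

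To verify that each of these remaining matrices really does yield finite growth, I would compare the list in [1] entry-by-entry with van de Leur's classification [10,11] of finite-growth symmetrizable contragredient Lie superalgebras with an isotropic simple root. Every symmetrizable matrix with a zero on the diagonal produced by [1] should appear, up to the row-rescaling and relabeling conventions, as the Cartan matrix of a finite-dimensional or affine superalgebra on van de Leur's list, from which finite growth of $\mathfrak{g}(A)$ follows.

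The main obstacle is bookkeeping: one has to match the two classifications on the nose, confirming that the exceptional $3\times 3$ family from [1] falls outside the symmetrizable case (so that the "almost always" of [1] sharpens to "always" once symmetrizability is imposed) and that every other entry on the [1] list is realized by an entry on van de Leur's list. Both checks are finite and concrete but demand care with continuous parameters and with the convention that diagonal entries are normalized to $0$ or $2$.
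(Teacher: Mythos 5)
Your proposal matches the paper's approach: the paper states this lemma without proof as a result imported from [1], and its introduction describes exactly the argument you outline --- the forward direction via the corollary to Theorem~\ref{cor32}, and the reverse direction by comparing the classification of regular Kac-Moody matrices with a zero on the diagonal in [1] (whose exceptional non-finite-growth family is non-symmetrizable) against van de Leur's list in [10,11]. The bookkeeping you flag is precisely the content delegated to [1], so nothing further is needed here.
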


\begin{lemma}
Let $A$ be nonsymmetrizable with a zero on the diagonal and
$\mathfrak g(A)$ be regular Kac-Moody. Then $\mathfrak g(A)$ has
finite growth if and only if it is either $q(n)^{(2)}$ or
$S(1,2,a)$ for non-integer $a$.  $S(1,2,a)$ is isomorphic to
$S(1,2,b)$ iff $a \pm b \in {\mathbb Z}$. See diagrams in table
8.1.
\end{lemma}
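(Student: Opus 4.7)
The plan is to use the classification of ``regular'' matrices (those for which every odd reflection yields a generalized Cartan matrix) with a zero on the diagonal obtained in \cite{1} as the starting point, and then to cut down to the finite-growth sublist among the nonsymmetrizable matrices. By hypothesis $A$ is one of the matrices on the list in \cite{1}. First I would separate out the known exception: \cite{1} singles out a particular family of $3\times 3$ matrices in the regular classification whose algebras fail to have finite growth, and I would verify this obstruction using Lemma~\ref{lm14} combined with Corollary~\ref{cor1} --- compute $\Pi_0$ for such a matrix, write down the corresponding even matrix $\overline{B}$, and show that $\overline{B}$ is not of affine or finite type, contradicting the hypothesis of finite growth. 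All remaining nonsymmetrizable regular matrices must therefore appear in our list.

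Next I would go through that residual list of nonsymmetrizable regular matrices (which in \cite{1} comes with explicit diagrams) and identify each of them with either $q(n)^{(2)}$ or $S(1,2,a)$. For $q(n)^{(2)}$ one produces it as a twisted loop superalgebra of $q(n)$ under an involution, and finite growth is automatic from the loop construction; the Cartan matrix with its zero diagonal comes from a standard choice of simple roots. For $S(1,2,a)$ I would exhibit it via its known realization as the conformal superalgebra of vector fields preserving a certain structure (cf.\ \cite{4}), from which the finite (in fact polynomial) growth is manifest, and then match the simple roots to one of the matrices on the list. This reduces the classification half of the statement to a finite number of explicit identifications.

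For the isomorphism statement $S(1,2,a)\cong S(1,2,b)\iff a\pm b\in\mathbb Z$, the main tool is odd reflections. I would pick the standard base of $S(1,2,a)$ recorded in Table~8.1, apply the regular odd reflection at the isotropic simple root, and track how the parameter $a$ transforms in the rescaled matrix $A'$; the calculation, done via the formulas for $a'_{ij}$ in Section~4, shows that one reflection produces $S(1,2,a\pm 1)$ (up to rescaling and relabeling), and a second type of reflection sends $a\mapsto -a$. Together these generate the shift-and-sign action, giving the ``if'' direction. For the ``only if'' direction I would use the fact that $a$ can be recovered (modulo the above action) as an invariant of $\mathfrak g(A)$, e.g.\ as the central charge, or as a ratio of off-diagonal entries of some matrix obtained by a canonical sequence of odd reflections, both of which are $\mathfrak g(A)$-intrinsic.

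The main obstacle is the second paragraph: showing that there are no further sporadic nonsymmetrizable finite-growth examples beyond $q(n)^{(2)}$ and $S(1,2,a)$. Once the list from \cite{1} is in hand this is a bounded check, but one must (i) rule out the exceptional $3\times 3$ family via an explicit $\overline{B}$-argument, and (ii) match each surviving diagram to the published description of $q(n)^{(2)}$ or $S(1,2,a)$. The isomorphism claim is also delicate because one must avoid conflating the odd-reflection equivalence with actual isomorphism of superalgebras; this is where the invariance argument in the last paragraph is essential.
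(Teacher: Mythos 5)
The paper offers no proof of this lemma at all: it appears under the heading ``We have the following results from [1]'', so the entire argument is delegated to the first author's thesis, with only the ``if'' direction implicitly covered by the explicit realizations of $q(n)^{(2)}$ and $S(1,2,a)$ in Section 9 (``The fact that the algebras in table 8.1 have finite growth follows directly from their descriptions''). Your outline is consistent with the strategy the paper describes in its introduction and Section 9 --- start from the classification in [1] of matrices all of whose odd reflections are generalized Cartan matrices, eliminate the exceptional $3\times 3$ family (your proposed $\Pi_0$/$\overline{B}$ argument via Lemma~\ref{lm14} and Corollary~\ref{cor1} is exactly the mechanism the paper uses elsewhere for such exclusions), and match the survivors to the known realizations --- so there is nothing here to contradict your plan, though of course it is a program rather than a proof, exactly as the paper's citation is. The one step I would flag as genuinely unsupported by either your sketch or anything in this paper is the ``only if'' half of the claim $S(1,2,a)\cong S(1,2,b)\iff a\pm b\in\mathbb{Z}$: recovering $a$ as an intrinsic invariant requires knowing that an isomorphism of contragredient superalgebras carries a base to a base up to the even Weyl group and odd reflections (so that the orbit of the Cartan matrix under odd reflections is a complete invariant); that fact is nontrivial and is neither stated nor proved here, so it remains the real content to be checked against [1].
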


\section{Non-regular elemental case}

Here we classify all finite growth Lie superalgebras $
{\mathfrak{g}\left(A\right)} $ where $ A $ is an irreducible,
elemental and $ {\mathfrak{g}\left(A\right)} $ is not regular
Kac-Moody. It is useful to describe $A$ by the corresponding Dynkin
diagram, see [2,10]. Here we use matrix diagrams, but still follow
the same labeling conventions for the vertices.
\begin{equation}
\begin{tabular}{|c|c|c|c|}
\hline
$\mathfrak{g}(A)$ & $A$ & $p(1)$ & Dynkin diagram \\
\hline
$A_1$ & (2) & 0 & $\bigcirc$ \\
$B(0,1)$ & (2) & 1 & \Huge$\bullet$\normalsize \\
$A(0,0)$ & (0) & 1 & $\bigotimes$ \\
\hline
\end{tabular}
\end{equation}

We assume the the matrix has only 0 or 2 on the diagonal and call
$A$ {\it admissible} if any matrix obtained from $A$ by a sequence
of odd reflections satisfies the conditions of Lemma ~\ref {lm23}.
We join vertex $i$ to vertex $j$ be an arrow if $a_{ij} \neq 0$ and
we write the number $a_{ij}$ on this arrow.

We proceed by induction on the number of vertices.  We say that $\Gamma'$ is a subdiagram of $\Gamma$ if $A'$ is
a principal submatrix of $A$.  We observe that if a diagram has only two vertices and one of them is singular,
then the diagram is non-elemental.  So our classification begins with $n=3$ vertices.

\begin{lemma} \label{lm6}\myLabel{lm6}\relax  Let $ A $ be an admissible $ 2\times2 $-matrix such that
$ a_{11}=0 $, $ a_{22}=2 $, and $a_{12} \neq 0$. If $ p\left(2\right)=0 $, then $ a_{21}=-1 $ or $ -2 $.
If $ p\left(2\right)=1 $, then $ a_{21}=-2 $.
\end{lemma}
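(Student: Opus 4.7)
The plan is to combine the admissibility conditions imposed by Lemma~\ref{lm23} on $A$ itself with the analogous conditions on the matrix $A'=r_1(A)$ obtained by the odd reflection at $\alpha_1$. Since $A$ is admissible, both sets of conditions hold, and together they pin down $a_{21}$.

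First, from $a_{11}=0$ and condition (2) of Lemma~\ref{lm23} we get $p(1)=1$, so $\alpha_1$ is isotropic and $r_1$ is defined. Condition (3) applied to row $i=2$ gives $a_{21}\in 2^{p(2)}\mathbb{Z}_{\leq 0}$, and condition (4), combined with $a_{12}\neq 0$ and $a_{22}=2$, forces $a_{21}\neq 0$. Hence $a_{21}\in 2^{p(2)}\mathbb{Z}_{<0}$.

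Next I would compute $A'$. Since both $a_{12}$ and $a_{21}$ are nonzero, the third case of the explicit formula for $a'_{ij}$ applies, and after the initial rescaling prescribed in Section~4 one obtains $a'_{21}=-a_{12}a_{21}$ and $a'_{22}=2a_{12}(1+a_{21})$, while the parity of the new simple root is $p(\alpha'_2)=p(\alpha_2+\alpha_1)=p(2)+1\pmod 2$. The argument now splits into two cases. If $a_{21}=-1$ then $a'_{22}=0$, and condition (2) applied to $A'$ forces $p(\alpha'_2)=1$, i.e.\ $p(2)=0$, which is consistent with the constraint $a_{21}\in 2^{p(2)}\mathbb{Z}_{<0}$. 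Otherwise I rescale row $2$ of $A'$ by the factor $1/(a_{12}(1+a_{21}))$ so that $\tilde{a}'_{22}=2$; a short computation then gives the key identity
\[
\tilde{a}'_{21} \;=\; -\frac{a_{21}}{a_{21}+1},
\]
which, crucially, is independent of the free scaling parameter $a_{12}$. Condition (3) applied to $A'$ now demands $\tilde{a}'_{21}\in 2^{p(\alpha'_2)}\mathbb{Z}_{<0}$.

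Finally, writing $a_{21}=-m$ with $m\geq 2$ gives $\tilde{a}'_{21}=-m/(m-1)$, which is an integer iff $m-1$ divides $m$; since $\gcd(m,m-1)=1$ this forces $m=2$, whence $\tilde{a}'_{21}=-2$. When $p(2)=0$ we need $\tilde{a}'_{21}$ to be a negative even integer, which $m=2$ satisfies; combined with the $a_{21}=-1$ branch this yields $a_{21}\in\{-1,-2\}$. When $p(2)=1$ the preliminary constraint $a_{21}\in 2\mathbb{Z}_{<0}$ already excludes $a_{21}=-1$, and the integrality analysis again leaves only $m=2$, giving $a_{21}=-2$. The only real obstacle is the bookkeeping of the rescaling factors and parity shift when passing to $A'$; once the formula $\tilde{a}'_{21}=-a_{21}/(a_{21}+1)$ is established, the remainder is an elementary number-theoretic check.
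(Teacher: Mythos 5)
Your proposal is correct and follows essentially the same route as the paper: reflect at the isotropic vertex $\alpha_1$, rescale so that the new diagonal entry is $2$, and observe that $a_{21}$ and $-a_{21}/(a_{21}+1)$ must both be negative integers, which forces $a_{21}\in\{-1,-2\}$, with the parity condition $a_{21}\in 2^{p(2)}\mathbb{Z}_-$ then excluding $-1$ when $p(2)=1$. The only cosmetic difference is that the paper normalizes $a_{12}=1$ at the outset while you carry $a_{12}$ through and check that it cancels.
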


\begin{proof}
We may assume $a_{12}=1$ without loss of generality.  Also, we know $a_{21} \neq 0$.  Now by reflecting at the
isotropic vertex $v_{1}$, we have
\begin{equation*}
\left(\begin{array}{cc}
0 & 1 \\
a_{21} & 2 \\
\end{array} \right)
\text{ $\overrightarrow{r_{1}}$ }
\left(\begin{array}{cc}
0 & 1 \\
-a_{21} & 2(a_{21}+1) \\
\end{array} \right).
\end{equation*}
If $a_{21} \neq -1$, then after rescaling the new matrix we have
\begin{equation*}
\left(\begin{array}{cc}
0 & 1 \\
\frac{-a_{21}}{a_{21}+1} & 2 \\
\end{array} \right).
\end{equation*}
Then $a_{21}, \frac{-a_{21}}{a_{21}+1} \in {\mathbb Z}_{-}$
implies that $a_{21}=-2$. Hence, $a_{21} \in \{-1,-2\}$.  Since
$a_{21} \in 2^{p\left(i\right)}{\mathbb Z}_{-}$, the result
follows.
\end{proof}

\begin{lemma} \label{lm7}\myLabel{lm7}\relax If $a_{31} \neq 0$, then the following matrix is not admissible:
\begin{equation*}A=
\left(\begin{array}{ccc}
a_{11} & 0 & a_{13}\\
0 & 0 & 1 \\
a_{31} & -2 & 2 \\
\end{array}\right).
\end{equation*} \end{lemma}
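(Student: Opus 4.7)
The plan is to apply the single odd reflection $r_2$ at the (isotropic) vertex $\alpha_2$ and show that the resulting matrix $A'$ already violates the conditions of Lemma~\ref{lm23}, so that no iterated sequence of reflections is needed.

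Suppose for contradiction that $A$ is admissible. Then $A$ itself satisfies Lemma~\ref{lm23}, so $p(2)=1$ (making $r_2$ well-defined as an odd reflection) and $a_{31}\in 2^{p(3)}\mathbb{Z}_{-}$; combined with the hypothesis $a_{31}\neq 0$ this forces $a_{31}<0$. Note also that $a_{12}=a_{21}=0$, so vertex~$1$ is untouched by the reflection at vertex~$2$.

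Next I would substitute into the formulas for $r_2$ from Section~4 with $k=2$. Since $a_{23}=1$, $a_{32}=-2$, and $a_{21}=a_{12}=0$, the second-case formula yields an unrescaled $(3,1)$-entry equal to $a_{23}a_{31}=a_{31}$, while the third-case formula yields an unrescaled $(3,3)$-entry $a_{23}a_{33}+2a_{23}a_{32}=2-4=-2$. To restore the convention $a'_{33}=2$, I would rescale row~$3$ by $-1$, which gives
\[
A'=\begin{pmatrix}a_{11} & 0 & a_{13}\\ 0 & 0 & 1\\ -a_{31} & -2 & 2\end{pmatrix}.
\]

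The punchline is then immediate: condition~(3) of Lemma~\ref{lm23}, applied to the row of $A'$ with $a'_{33}=2$, would require $a'_{31}=-a_{31}\in 2^{p(3)}\mathbb{Z}_{-}$, i.e.\ $-a_{31}\leq 0$; but $a_{31}<0$ gives $-a_{31}>0$, a contradiction. Hence $A$ is not admissible. The only delicate point is to track the sign flip of the off-diagonal entries of row~$3$ induced by renormalising $a'_{33}=-2$ back to $2$; everything else is routine substitution into the formulas of Section~4.
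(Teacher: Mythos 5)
Your proposal is correct and follows exactly the paper's argument: reflect once at the isotropic vertex $v_2$, rescale row $3$ so the diagonal entry is again $2$, observe that the $(3,1)$-entry becomes $-a_{31}$, and derive the contradiction with condition (3) of Lemma~\ref{lm23} since admissibility of $A$ forces $a_{31}<0$. The explicit tracking of the sign flip from renormalising $a'_{33}=-2$ to $2$ is the same step the paper performs implicitly when it writes the reflected and rescaled matrix.
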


\begin{proof}
By reflecting at the isotropic vertex $v_{2}$ and then rescaling, we have
\begin{equation*}A'=
\left(\begin{array}{ccc}
a_{11} & 0 & a_{13}\\
0 & 0 & 1 \\
-a_{31} & -2 & 2 \\
\end{array}\right).
\end{equation*}
For matrix $A$ and $A'$ to be admissible, we must have that $a_{31}<0$ and $-a_{31} < 0$.
But, this is a contradiction.
\end{proof}

\begin{theorem}\label{th500}\myLabel{th500\relax} Every admissible
  diagram with 3 vertices which contains a singular vertex and
  defines an algebra of finite growth is isomorphic to $S(1,2,0)$
 or $D(2,1,0)$.
\end{theorem}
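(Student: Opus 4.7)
The plan is to exploit the consequences of having a singular vertex together with the asymmetric constraint~(4) of Lemma~\ref{lm23}, and then run a finite case analysis in which odd reflections serve as the admissibility test.

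First I would extract structural information from the existence of a singular vertex. Suppose $\alpha_k$ is singular; then by definition there is a simple root $\alpha_j$ with $a_{kj}=0$ but $a_{jk}\neq 0$. Applying condition~(4) of Lemma~\ref{lm23} to the ordered pair $(k,j)$ forces $a_{kk}=0$, and then condition~(2) forces $p(\alpha_k)=1$, so the singular vertex must be isotropic odd. Applying condition~(4) again to $(j,k)$ (the indices are reversed: $a_{jk}\neq 0$, but we still want to check that $a_{jj}$ cooperates) plus an odd reflection at $\alpha_k$ will, in all the cases that survive, force $a_{jj}=0$ as well, so another vertex is also isotropic odd. This pins down the "shape" of the possible diagrams considerably.

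Next, I would relabel the vertices so that vertex $2$ is singular, vertex $3$ is the one witnessing the singularity ($a_{23}=0$, $a_{32}\neq 0$), and vertex $1$ is the remaining one. I would then split into cases based on the three binary choices
\begin{itemize}
\item $a_{11}\in\{0,2\}$,
\item whether the pair $(1,2)$ is connected ($a_{12}$ or $a_{21}$ nonzero) and whether that connection is symmetric,
\item whether the pair $(1,3)$ is connected and whether that connection is symmetric.
\end{itemize}
In each case, the admissible entries on any $2\times 2$ sub-block are controlled by Lemma~\ref{lm6} together with the generalized-Cartan-type restrictions of Lemma~\ref{lm23}, which reduces each sub-case to finitely many numerical possibilities. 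Theorem~\ref{cor32} tells me that after any sequence of odd reflections the resulting matrix must again satisfy the conditions of Lemma~\ref{lm23}; I would use the single reflection $r_2$ at the singular vertex (and, where vertex $3$ or vertex $1$ is isotropic, also $r_3$ or $r_1$) as the discriminating test.

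Lemma~\ref{lm7} will do most of the elimination work: it rules out any configuration in which a singular edge $2\to 3$ with the stated entries coexists with a nonzero $a_{31}$. I would also use Lemma~\ref{lm6} to force the specific values $a_{21},a_{32}\in\{-1,-2\}$ on whichever $2\times 2$ sub-blocks contain an isotropic vertex adjacent to a non-isotropic vertex. The hardest part will be the bookkeeping: there are several configurations where vertex $1$ is non-isotropic ($a_{11}=2$) and connected asymmetrically or symmetrically to vertex $2$ or vertex $3$, and in each one I need to carry out the odd reflection, rescale rows so the new diagonal entries are $0$ or $2$, and check whether the new off-diagonal entries lie in $2^{p(i)}\mathbb{Z}_{-}$ as required.

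After all incompatible cases are eliminated, the surviving diagrams should be exactly two: the one whose Cartan matrix is that of $S(1,2,0)$ and the one whose Cartan matrix is that of $D(2,1,0)$. I would finish by identifying each surviving matrix with the Cartan matrix of the claimed algebra (matching against the diagrams in Table~8.1) and noting that both $S(1,2,0)$ and $D(2,1,0)$ are known to have finite growth, so all surviving candidates do in fact occur.
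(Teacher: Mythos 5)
Your overall strategy (normalize so that one vertex is singular, then case-split and use odd reflections as an admissibility filter, with Lemmas~\ref{lm6} and~\ref{lm7} doing eliminations) is the right general shape, but there are two concrete problems. First, your structural claim about the witness vertex is backwards. If $\alpha_k$ is singular with witness $\alpha_j$ (so $a_{kj}=0$, $a_{jk}\neq 0$), condition~(4) of Lemma~\ref{lm23} gives $a_{kk}=0$ and $p(\alpha_k)=1$, as you say; but the singular reflection $r_k$ then sends $\alpha_j$ to $\alpha_j+\alpha_k$ with $h_j'$ proportional to $h_k$, so the \emph{reflected} root satisfies $a'_{jj}=(\alpha_j+\alpha_k)(c\,h_k)=c(a_{kj}+a_{kk})=0$ and is still connected to the third vertex; condition~(2) applied to the reflected matrix forces $p(\alpha_j+\alpha_k)=1$, hence $p(\alpha_j)=0$ and $a_{jj}=2$. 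This is exactly Lemma~\ref{lm41} of the paper: the witness is even non-isotropic, not isotropic as you assert ("force $a_{jj}=0$ as well"). Indeed in both $S(1,2,0)$ and $D(2,1,0)$ the witness vertex carries a $2$ on the diagonal. Building your case analysis on the opposite assumption would send you down wrong branches.

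Second, your toolkit is missing the ingredient that actually pins down the remaining off-diagonal entries. Closure of the conditions of Lemma~\ref{lm23} under odd reflections (Theorem~\ref{cor32}), together with Lemmas~\ref{lm6} and~\ref{lm7}, is only an \emph{admissibility} test; it constrains entries like $a_{21},a_{23}$ to lie in $\mathbb{Z}_{<0}$ but does not by itself select $a_{21}=a_{23}=-1$ in the $S(1,2,0)$ configuration, nor $a_{21}=-1$ in the $D(2,1,0)$ configuration. The paper injects the finite-growth hypothesis at exactly this point via the principal roots: certain even roots such as $\alpha_1+\alpha_3$ and $\alpha_2$ (resp.\ $2\alpha_1+\alpha_2+\alpha_3$ and $\alpha_2$) lie in $\Pi_0$, and Corollary~\ref{cor1} forces the resulting $2\times 2$ even matrix $B$ to be a generalized Cartan matrix of finite growth, whence $a_{21}+a_{23}=-2$ with both summands negative integers, etc. Without an argument of this kind (or some substitute that genuinely uses finite growth rather than admissibility), your case analysis would terminate with infinitely many surviving integer-labelled candidates rather than exactly the two claimed diagrams. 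You should also record why the singular vertex is connected to the third vertex at all ($a_{13}\neq 0$ follows from $A$ being elemental), since that nonvanishing is what makes the reflection argument above work.
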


\begin{proof}
Let $ \Pi=\left\{\alpha_{1},\alpha_{2},\alpha_{3}\right\} $ and
assume that $ \alpha_{1} $ is singular. Then $p(\alpha_{1})=1$.
Without loss of generality we have $ a_{11}=a_{12}=0 $, $ a_{21}\neq
0,a_{13}=1 $.
We have these assumptions for $ A $ in the
following 3 lemmas.

\begin{lemma} \label{lm41}\myLabel{lm41}\relax  $ p(\alpha_{2})=0 $, and therefore $ a_{22}=2 $.
\end{lemma}

\begin{proof} Consider the singular reflection $ r_{1} $, and
let $ A' $ be the new Cartan matrix,
$\alpha'_{i}=r_{1}\left(\alpha_{i}\right) $. Then $\alpha_{2}'=
\alpha_{1}+\alpha_{2}$, $h'_{2}=a_{21}h_{1} $ and  $a'_{23}=a_{21}a_{13} \neq 0 $,
$ a'_{22}=0 $. This implies $p(\alpha_{2}')=1$.
Since $p(\alpha_{2}')=p(\alpha_{1})+p(\alpha_{2}) (\text{mod }2)$,
we conclude $p(\alpha_{2})=0$ and hence $a_{22}=2$.
\end{proof}

\begin{lemma} \label{lm42}\myLabel{lm42}\relax  If $ a_{33}=0 $, then $ A $ equals
$\xymatrix{& \bigotimes \ar@<.5ex>[rdd]^{-2} \ar@<.5ex>[ldd]^{1} & \\
& & \\ \bigotimes \ar@<.5ex>[ruu]^{1} \ar@<.5ex>[rr]^{0} & &
\bigcirc \ar@<.5ex>[ll]^{-1} \ar@<.5ex>[luu]^{-1}}$, which is $S(1,2,0)$.
\end{lemma}

\begin{proof} Note that by Lemma~\ref{lm41}, $ a_{31}\not=0 $ and therefore without
loss of generality we assume that $ a_{31}=1 $. By reflecting at $v_{1}$ we obtain
the matrix $ A' $
\begin{equation*}
\left( \begin{array}{rcr}
0 & 0 & 1 \\
0 & 0 & 1 \\
-1 & 1+a_{32} & 2 \\
\end{array} \right)
\end{equation*}
Since $ {\mathfrak g}\left(A'\right) $ must have finite growth
$a'_{32}=-1 $ or $ -2 $. By Lemma~\ref{lm7}, $ a'_{32} \neq -2 $.
Thus, $1+a_{32}=-1 $ and $ a_{32}=-2 $.  This implies $a_{23} \neq
0$. By considering the odd reflection $r_{3}$ we have the even
roots $ \alpha_{1}+\alpha_{3},\alpha_{2} \in \Pi_0$.  The subalgebra with
the corresponding positive generators  $X'_{1}=[X_{1},X_{3}]$ and
$X'_{2}=X_{2}$
must have finite growth.  So by Corollary~\ref{cor1}, the Lie algebra
$ {\mathfrak g}\left(B\right) $ must have finite growth, where we calculate that
\begin{equation*}B=
\left( \begin{array}{cr}
2 & -2 \\
a_{21}+a_{23} & 2 \\
\end{array} \right).
\end{equation*}
Recall that $ a_{21},a_{23}\in{\mathbb Z}_{<0} $, so
$a_{21}+a_{23}\leq-2 $.
Thus, $a_{21}+a_{23}=-2 $ and $ a_{21}=a_{23}=-1 $.\end{proof}

\begin{lemma} \label{lm43}\myLabel{lm43}\relax  If $ a_{33}=2 $, then $ A $ equals
$\xymatrix{ \bigcirc \ar@<.5ex>[r]^{-1} & \bigotimes
\ar@<.5ex>[l]^{1} \ar@<.5ex>[r]^{0} & \bigcirc
\ar@<.5ex>[l]^{-1}}$, which is $D(2,1,0)$.
\end{lemma}

\begin{proof} Then $a_{31},a_{32} \in {\mathbb Z}_{-}$.
Now $ a_{13}\not=0 $ and $ a_{33}=2 $ imply $ a_{31}\not=0 $. Then by Lemma~\ref{lm6} $
a_{31}=-1 $ or $ -2 $. Suppose $ a_{31}=-2 $. Then after an application
of $ r_{1} $ we obtain the matrix
\begin{equation*} \left(
\begin{array}{rcr}
0 & 0 & 1 \\
0 & 0 & 1 \\
-2 & 2-a_{32} & 2 \\
\end{array} \right)
\end{equation*}
This implies  $ 2-a_{32}\in{\mathbb Z}_{<0} $, but this is
impossible since $a_{32}\in{\mathbb Z}_{<0} $.  Hence $ a_{31}=-1$.
By reflecting at $v_{1}$ and then at $v_{3}$ we obtain
\begin{equation*}
\text{ $\overrightarrow{r_{1}}$ }
\left( \begin{array}{rcr}
0 & 0 & 1 \\
0 & 0 & 1 \\
1 & a_{32}-1 & 0 \\
\end{array} \right)
\text{ $\overrightarrow{r_{3}}$ }
 \left( \begin{array}{rcr}
2 & a_{32} & -1 \\
\frac{a_{32}}{a_{32}-1} & 2 & -1 \\
1 & a_{32}-1 & 0 \\
\end{array} \right).
\end{equation*}
Then $a_{32}, \frac{a_{32}}{a_{32}-1} \in {\mathbb Z}_{-}$ implies $a_{32}=0$.
Thus $ a_{23}=0 $.

Now we need to find $ a_{21} $.  Observe that $\alpha_{2}, 2\alpha_{1}+\alpha_{2}+\alpha_{3} \in \Pi_0$, and
and the subalgebra generated by $X'_{1}=[[X_{1},X_{2}],[X_{1},X_{3}]]$ and $X'_{2}=X_{2}$
must have finite growth.  By Corollary~\ref{cor1}, the Lie algebra
$ {\mathfrak g}\left(B\right) $ must have finite growth, where we calculate that
\begin{equation*}B=
\left( \begin{array}{cr}
2 & 0 \\
2a_{21}+2 & 2 \\
\end{array} \right).
\end{equation*}
We can conclude that $ a_{21}=-1$.\end{proof}
\end{proof}

\begin{lemma} \label{lm100}\myLabel{lm100}\relax
Suppose that $A$ is a $3\times3$ generalized Cartan matrix, but    $
{\mathfrak{g}\left(A\right)} $ is not regular Kac-Moody. If $
{\mathfrak{g}\left(A\right)} $ has finite growth, then $A$ is one of
the following diagrams:
\begin{equation*}
\xymatrix{& \bigcirc \ar@<.5ex>[rdd]^{-1} \ar@<.5ex>[ldd]^{-1} & \\
& & \\ \bigotimes \ar@<.5ex>[ruu]^{a-1} \ar@<.5ex>[rr]^{-a} & &
\bigotimes \ar@<.5ex>[ll]^{-a} \ar@<.5ex>[luu]^{a+1}} \text{  } a
\in \mathbb{Z}\setminus\{0\}.
\end{equation*}
\end{lemma}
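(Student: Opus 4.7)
The plan is to exploit the non-regularity hypothesis to reach one of the matrices classified in Theorem~\ref{th500} via a chain of odd reflections, and then to invert that chain. Since $\mathfrak{g}(A)$ is not regular Kac-Moody, I can pick a shortest sequence of odd reflections $A = A_0, A_1, \dots, A_{m-1}, A_m$ with $A_0, \dots, A_{m-1}$ gCMs but $A_m$ not. By Lemma~\ref{lm2} each $\mathfrak{g}(A_i)$ still has finite growth, Theorem~\ref{cor32} keeps $A_m$ admissible, and Lemma~\ref{lm27} forces $A_m$ to contain a singular simple root. Then Theorem~\ref{th500} identifies $A_m$ with either $S(1,2,0)$ or $D(2,1,0)$.

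Next I would rule out $A_m \cong D(2,1,0)$. A direct check with the reflection formulas of Section~4 shows that when one reflects a gCM at an (automatically regular) isotropic simple root $\alpha_k$, the root $-\alpha_k$ remains a regular isotropic simple root in the reflected matrix (this uses $a_{kj}' = a_{kj}$, $a_{jk}' = -a_{kj} a_{jk}$, and the gCM condition $a_{kj} = 0 \Leftrightarrow a_{jk} = 0$). Since the only odd isotropic simple root of $D(2,1,0)$ is singular, no gCM can reflect into it, so $A_m \cong S(1,2,0)$. Inverting this last reflection by applying the formulas to $S(1,2,0)$ at its unique regular isotropic simple root, rescaling rows to normalize the diagonal to $\{0,2\}$, and relabeling the vertices so the even one is last, I would recognise $A_{m-1}$ as the triangle diagram with parameter $a = 2$.

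To propagate this along the chain, I would argue by backwards induction on $m - i$ that every $A_i$ is a triangle with some integer parameter. The inductive step reduces to computing the two regular odd reflections $r_1, r_2$ of a generic triangle with parameter $a$; after applying the reflection formulas, rescaling rows, and relabeling vertices one obtains the transformation rules $r_1 \colon a \mapsto 1 - a$ and $r_2 \colon a \mapsto -1 - a$, so the triangle family is closed under the regular odd reflections that stay in the gCM locus. Hence $A = A_0$ itself is a triangle with $a \in \mathbb{Z} \setminus \{0\}$.

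The main technical obstacle is executing the reflection computation for the triangle cleanly: one must apply the Section~4 formulas carefully, track the row rescalings needed to restore the diagonal to $\{0, 2\}$, and identify the correct permutation of indices that recognises the reflected matrix as another triangle. Along the way Lemmas~\ref{lm6} and~\ref{lm7} are used to rule out alternative $3$-vertex diagrams that would otherwise be consistent with admissibility, and Corollary~\ref{cor1} constrains the integer parameter by forcing the associated even subalgebra to have finite growth.
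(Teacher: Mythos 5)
Your proposal follows the paper's proof of this lemma essentially step for step: the paper likewise reflects $A$ by a sequence of regular odd reflections to a matrix $A'$ containing a singular vertex, observes that $A'$ must also contain a regular isotropic simple root (the negative of the root just reflected through, exactly your $a_{kj}'=a_{kj}$, $a_{jk}'=-a_{kj}a_{jk}$ argument), so that $A'$ cannot be $D(2,1,0)$ and must be the $S(1,2,0)$ diagram of Theorem~\ref{th500}, and then pulls the conclusion back through the chain using the closure of the integer-parameter triangle family under regular odd reflections. The only difference is that the paper cites this closure as a known fact, while you propose to verify it by computing the rules $a\mapsto 1-a$ and $a\mapsto -1-a$; that computation is correct for $a\neq 0$.

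There is one loose end you should close. The transformation rules, and hence the closure claim, fail at $a=0$: there the two isotropic vertices are disconnected, the triangle is symmetrizable (it is a base of a finite-dimensional algebra of type $A(1,1)$ and is regular Kac--Moody), and reflecting at either isotropic vertex yields the chain $\otimes\!-\!\otimes\!-\!\otimes$ rather than another triangle. Since the orbit of $\pm 2$ under your two maps does contain $0$, your backwards induction needs the extra observation that the parameter never actually reaches $0$ along the chain: the only preimages of $0$ under $a\mapsto 1-a$ and $a\mapsto -1-a$ are $a=\pm 1$, and the triangles with $a=\pm 1$ have a singular vertex, so they cannot occur among the intermediate generalized Cartan matrices $A_0,\dots,A_{m-1}$. (Equivalently, every $A_i$ defines the infinite-dimensional algebra $S(1,2,0)$, whereas the $a=0$ triangle defines a finite-dimensional one.) This same observation is what justifies the exclusion $a\neq 0$ in the statement, which your write-up asserts but does not derive.
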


\begin{proof}
If $A$ does not have a singular root and $
{\mathfrak{g}\left(A\right)} $ is not regular Kac-Moody, then $A$
must reflect by a sequence of regular reflections to some $A'$ which
contains a singular root. Then $A'$ must contain at least two
isotropic roots, one singular and one regular. Hence $A'$ is
isomorphic to $S(1,2,0)$.
 The claim now immediately follows from the fact that the set $S(1,2,a)$
 for integer $a$ is closed under
regular odd reflections and contains the $S(1,2,0)$ diagram with a singular vertex.
\end{proof}

\begin{theorem}
Every admissible diagram with 4 vertices which contains a singular
vertex and defines an algebra of finite growth is isomorphic to
$\widehat{D}(2,1,0)$.  An admissible diagram with $n \geq 5$
vertices which contains a singular vertex does not define an algebra
of finite growth.
\end{theorem}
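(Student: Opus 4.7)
The plan is to exploit the fact, already established in Theorem~\ref{th500}, that every admissible $3$-vertex diagram containing a singular vertex and of finite growth is either $S(1,2,0)$ or $D(2,1,0)$. Let $v_1$ be a singular vertex of $\Gamma$, and pick a witness $v_2$ with $a_{1,2}=0$ but $a_{2,1}\neq 0$. Then $v_1$ remains singular in \emph{every} induced subdiagram that still contains $v_2$. Combined with Lemma~\ref{lm21}, this forces every $3$-vertex subdiagram through $\{v_1,v_2\}$ to be (after rescaling) one of $S(1,2,0)$ or $D(2,1,0)$, with $v_1$ playing the role of the singular vertex and $v_2$ of its witness. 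This is the rigid skeleton that drives the whole argument.

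For $n=4$, consider the two such subdiagrams $\{v_1,v_2,v_3\}$ and $\{v_1,v_2,v_4\}$. There are at most four combinations. For each, I would first fix the edge data between $v_1,v_2$ and each of $v_3,v_4$ from the $S(1,2,0)/D(2,1,0)$ tables; then determine the edge between $v_3$ and $v_4$ by demanding that the subdiagrams $\{v_1,v_3,v_4\}$ and $\{v_2,v_3,v_4\}$ also have finite growth (so they must be symmetrizable affine/finite, one of the diagrams of Lemma~\ref{lm100}, or again $S(1,2,0)/D(2,1,0)$ if a singular vertex is produced). Odd reflections, singular or regular, applied to the full $4$-vertex matrix give further admissibility constraints via Theorem~\ref{cor32}, and the even subalgebra generated by a well-chosen finite set of principal roots must be an even generalized Cartan matrix of finite growth by Corollary~\ref{cor1}. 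I expect this case analysis to cut everything down except the affine diagram $\widehat{D}(2,1,0)$.

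For $n\geq 5$, I would argue by restriction: every $4$-vertex subdiagram containing $v_1$ and $v_2$ has finite growth and a singular vertex, so by the $n=4$ classification it must be $\widehat{D}(2,1,0)$. The connectivity of $v_1$ inside $\widehat{D}(2,1,0)$ is very rigid (it appears with specific neighbours and specific edge labels), so any extra vertex $v_5$ would have to fit simultaneously into two distinct copies of $\widehat{D}(2,1,0)$ sharing the edge $\{v_1,v_2\}$. Comparing the two forced sets of edge labels between $v_5$ and $\{v_1,v_2\}$ produces a contradiction, ruling out a fifth vertex. The main obstacle is the $n=4$ analysis itself: tracking admissibility through multiple odd reflections and, for each candidate, verifying via Corollary~\ref{cor1} that the even subalgebra $\mathfrak{g}(B)$ is an honest affine/finite Kac--Moody algebra. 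Once that is done, the higher-rank exclusion follows quickly from the rigidity of $\widehat{D}(2,1,0)$.
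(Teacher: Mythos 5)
Your overall strategy coincides with the paper's: reduce to classifying one-vertex extensions of the non-regular $3$-vertex diagrams $S(1,2,0)$ and $D(2,1,0)$ of Theorem~\ref{th500}, constrain each candidate via Corollary~\ref{cor1} (the even matrix $B$ built from principal roots must be a finite-growth generalized Cartan matrix) together with admissibility under odd reflections, and then exclude $n\geq 5$ by comparing overlapping $4$-vertex subdiagrams. The paper does exactly this, finding $\widehat{D}(2,1,0)$ as the unique $4$-vertex survivor and killing a fifth vertex because the forced edge labels $a,b,c$ at the central isotropic vertex would have to satisfy $a=-b$, $b=-c$, $c=-a$ with $a,b,c\neq 0$.

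There is, however, a genuine gap in your ``rigid skeleton.'' It is not true that every $3$-vertex (or $4$-vertex) subdiagram through $\{v_1,v_2\}$ falls under the classification of Theorem~\ref{th500}: that theorem applies only to \emph{elemental, indecomposable} diagrams, and if the added vertex $v_4$ has $a_{14}=a_{41}=a_{24}=a_{42}=0$ (i.e.\ it is attached to the rest of the diagram only through the third, non-singular vertex), then the subdiagram $\{v_1,v_2,v_4\}$ has a zero row or is decomposable and yields no constraint at all. These are precisely the configurations where the paper's proof has to work hardest: in extending $S(1,2,0)$, the case $a_{24}=a_{42}=a_{14}=a_{41}=0$ with $v_4$ attached only to $v_3$ is eliminated only after determining $a_{34},a_{43}$ from Lemmas~\ref{lm6} and~\ref{lm7} and then performing an odd reflection at $v_4$ whose $3$-vertex restriction is inadmissible; the analogous case for $D(2,1,0)$ requires reflecting at the isotropic vertex and invoking the non-extendability of $S(1,2,0)$, plus an appeal to elementality and indecomposability of the full matrix. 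Your plan, as set up, enumerates only the cases in which both $\{v_1,v_2,v_3\}$ and $\{v_1,v_2,v_4\}$ are already in the classified list, so it would silently skip exactly these configurations rather than rule them out; the same issue recurs in your $n\geq 5$ step when the new vertex is not adjacent to $v_1$ or $v_2$. The plan is therefore the right one, but the case analysis must be enlarged to cover attachments of the new vertex that avoid the singular pair entirely.
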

\begin{proof}
Every admissible non-regular contragredient Lie superalgebra of
finite growth with $n>3$ must contain an admissible 3 vertex
subdiagram which is non-regular. So it suffices to find all
admissible extensions of $S(1,2,0)$ and $D(2,1,0)$.

\begin{lemma}
$S(1,2,0)$ is not extendable.
\end{lemma}

\begin{proof}
Suppose $A$ is an admissible matrix, such that the submatrix given by excluding the vertex $v_{4}$ is $S(1,2,0)$,
\begin{equation*}
\left( \begin{array}{rrrr}
0 & 1 & 0 & a_{14} \\
1 & 0 & -2 & a_{24} \\
-1 & -1 & 2 & a_{34} \\
a_{41} & a_{42} & a_{43} & a_{44} \\
\end{array} \right).
\end{equation*}

Case 1:  $p(4)=0$ \\
Then $a_{44}=2$, and $a_{41},a_{42},a_{43} \in {\mathbb Z}_{-}$.  The subalgebra corresponding to the even roots
$\alpha_{1}+\alpha_{2},\alpha_{3},\alpha_{4} \in \Pi_0$, with positive generators $[X_{1},X_{2}]$ ,$X_{3}$ , $X_{4}$
must have finite growth.  So by Corollary~\ref{cor1}, the Lie algebra $ {\mathfrak{g}\left(B\right)} $
must have finite growth, where we calculate that
\begin{equation*}B=
\left( \begin{array}{ccc}
2 & -2 & a_{14}+a_{24}\\
-2 & 2 & a_{34}\\
a_{41}+a_{42} & a_{43} & 2\\
\end{array} \right).
\end{equation*}
Hence, $a_{34}=a_{43}=0$, $a_{41}+a_{42}=0$, and
$a_{14}+a_{24}=0$. Since $a_{41},a_{42} \leq 0$ this implies
$a_{41}=a_{42}=0$.  It follows that $a_{14}=a_{24}=0$.  Hence,
case 1 is not admissible.

Case 2:  $p(4)=1$ and $a_{44}=2$ \\
Then $a_{41},a_{42},a_{43} \in {\mathbb Z}_{-}$.  The subalgebra corresponding to the even roots
$\alpha_{1}+\alpha_{2},\alpha_{3},2\alpha_{4} \in \Pi_0$, with positive generators $[X_{1},X_{2}]$, $X_{3}$, $[X_{4},X_{4}]$.
must have finite growth.  So by Corollary~\ref{cor1}, the Lie algebra $ {\mathfrak{g}\left(B\right)} $
must have finite growth, where we calculate that
\begin{equation*}B=
\left( \begin{array}{ccc}
2 & -2 & 2(a_{14}+a_{24})\\
-2 & 2 & 2a_{34}\\
\frac{1}{2}(a_{41}+a_{42}) & \frac{1}{2}a_{43} & 2\\
\end{array} \right).
\end{equation*}
By the same argument as in case 1, we conclude that case 2 is not admissible.

Case 3: $p(4)=1$ and $a_{44}=0$, with $a_{24} \neq 0$ \\
Then $a_{42} \neq 0$ by Lemma~\ref{lm41}, and WLOG $a_{42}=1$. The
subalgebra corresponding to the even roots
$\alpha_{1}+\alpha_{2},\alpha_{3},\alpha_{2}+\alpha_{4} \in \Pi_0$,
with positive generators $[X_{1},X_{2}]$, $X_{3}$, $[X_{4},X_{2}]$
must have finite growth.  So by Corollary~\ref{cor1}, the Lie
algebra $ {\mathfrak{g}\left(B\right)} $ must have finite growth,
where we calculate that
\begin{equation*}B=
\left( \begin{array}{ccc}
2 & -2 & 1+a_{14}+a_{24} \\
-2 & 2 & a_{34}-1 \\
1+a_{24}(1+a_{41}) & -2+a_{24}a_{43} & 2a_{24}\\
\end{array} \right).
\end{equation*}
Hence, $a_{34}-1=0$ implying $a_{34}=1$.
But, $a_{34} \in {\mathbb Z}_{-}$, which is a contradiction.
Hence, case 3 is not admissible.

Case 4:  $p(4)=1$ and $a_{44}=0$, with $a_{24}=0$ \\
Then $a_{42}=0$ by Lemma~\ref{lm41}.  For the submatrix excluding
$v_{3}$ to be admissible, either $a_{14}=0$ or $a_{14}=-1$. Suppose
$a_{14}=0$. Then $a_{41}=0$.  Since A is elemental, $a_{43} \neq 0$.
Then $a_{34} \neq 0$.  For the submatrix excluding $v_{1}$ to be
admissible $a_{34} \in \{-1,-2\}$ by Lemma~\ref{lm6}. By
Lemma~\ref{lm7}, $a_{34} \neq -2$.  Hence, $a_{34}=-1$ and WLOG
$a_{43}=1$.  Then by reflecting at $v_{4}$ we have $A'$ is
\begin{equation*} \left(
\begin{array}{rrrr}
0 & 1 & 0 & 0 \\
1 & 0 & -2 & 0 \\
-1 & -1 & 0 & 1 \\
0 & 0 & 1 & 0 \\
\end{array} \right).
\end{equation*}
But, the submatrix excluding $v_{4}$ of $A'$ is not admissible.
Hence, $a_{14}=-1$. Then the submatrix excluding $v_{2}$ of $A$ must
be $S(1,2,0)$ by Lemma~\ref{lm42}, which implies $a_{34}=-1$,
$a_{41}=1$ and $a_{43}=-2$.  Then by reflecting at $v_{2}$ we have
\begin{equation*}
 \left(
\begin{array}{rrrr}
0 & 1 & 0 & -1 \\
1 & 0 & -2 & 0 \\
-1 & -1 & 2 & -1 \\
1 & 0 & -2 & 0 \\
\end{array} \right)
\text{ $\overrightarrow{r_{2}}$ }
 \left(
\begin{array}{rrrr}
2 & -1 & -1 & -1 \\
1 & 0 & -2 & 0 \\
3 & -2 & 0 & 2 \\
1 & 0 & -2 & 0 \\
\end{array} \right).
\end{equation*}
The submatrix excluding $v_{2}$ of $A'$ is not admissible by the
three vertex classification. Hence, case 4 is not admissible.
Therefore, $S(1,2,0)$ is not extendable.
\end{proof}

\begin{lemma}
The only admissible extension of $D(2,1,0)$ is the diagram $\widehat{D}(2,1,0)$
\begin{equation*}
\xymatrix{
     & & \bigcirc  \ar@<.5ex>[ld]^{-1}  \\
     \bigcirc \ar@<.5ex>[r]^{-1} & \bigotimes \ar@<.5ex>[l]^{0} \ar@<.5ex>[ru]^{1}
     \ar@<.5ex>[rd]^{-1} & \\
     & & \bigcirc  \ar@<.5ex>[lu]^{-1}  }.
\end{equation*}  The diagram $\widehat{D}(2,1,0)$ is not extendable.
\end{lemma}

\begin{proof}
Suppose $A$ is an admissible matrix, such that the submatrix given by excluding the vertex $v_{4}$ is $D(2,1,0)$,
\begin{equation*}
\left( \begin{array}{rrrr}
2 & -1 & 0 & a_{14} \\
1 & 0 & 0 & a_{24} \\
0 & -1 & 2 & a_{34} \\
a_{41} & a_{42} & a_{43} & a_{44} \\
\end{array} \right).
\end{equation*}

Case 1: $a_{24}=0$, $a_{42} \neq 0$ \\
Then the for the submatrix given by excluding the vertex $v_{3}$ to be admissible it must be $D(2,1,0)$.  Thus
$a_{14}=a_{41}=0$, $a_{42}=-1$ and $p(4)=0$.  By reflecting at $v_{2}$ we obtain the matrix $A'$
\begin{equation*}
\left( \begin{array}{rrrr}
0 & 1 & -1 & -1 \\
1 & 0 & 0 & 0 \\
1 & 0 & 0 & 0 \\
1 & 0 & 0 & 0 \\
\end{array} \right).
\end{equation*}
But, the submatrix given by excluding $v_{2}$ of $A'$ is not
admissible.

Case 2:  $a_{24}=0$, $a_{42}=0$ \\
Then by reflecting at $v_{2}$ we obtain $A'$
\begin{equation*}
\left( \begin{array}{rrrr}
0 & 1 & -1 & a_{14} \\
1 & 0 & 0 & 0 \\
1 & 0 & 0 & 0 \\
a_{41} & 0 & a_{43} & a_{44} \\
\end{array} \right).
\end{equation*}
Consider the submatrix of $A'$  given by excluding the vertex
$v_{2}$.  Since $S(1,2,0)$ is not extendable, we have that
$a_{43}=0$.  Now consider the original matrix $A$.  If $a_{14}=0$
then $a_{41}=0$ since $a_{11}=2$. Since $A$ is elemental this
implies $a_{44}=2$. Then $a_{34}=0$ and so $A$ is decomposable. So
we have that $a_{14} \neq 0$.  Since $a_{14} \in \mathbb{Z}_{-}$,
the submatrix of $A'$ given by excluding the vertex $v_{2}$ is not
admissible.

Case 3:  $a_{24} \neq 0$ \\
Then the for the submatrix given by excluding the vertex $v_{1}$ to
be admissible it must be $D(2,1,0)$ since $S(1,2,0)$ is not
extendable. Then $a_{34}=a_{43}=0$, $a_{42}=-1$, $a_{44}=2$ and
$p(4)=0$.  For the submatrix given by excluding $v_{3}$ to have
finite growth, either $a_{24}=a_{14}=a_{41}=-1$; or
$a_{14}=a_{41}=0$.  So first suppose $a_{24}=a_{14}=a_{41}=-1$. Then
by reflecting at $v_{2}$ we obtain the matrix $A'$
\begin{equation*}
\left( \begin{array}{rrrr}
0 & 1 & -1 & -1 \\
1 & 0 & 0 & -1 \\
-1 & 0 & 0 & 1 \\
1 & -1 & 1 & 0\\
\end{array} \right).
\end{equation*}
But, the submatrix given by excluding $v_{2}$ is not admissible.
Thus $a_{14}=a_{41}=0$.  Then by reflecting at $v_{2}$ we obtain the
matrix $A'$
\begin{equation*}
\left( \begin{array}{cccc}
0 & 1 & -1 & -1-a_{24} \\
1 & 0 & 0 & a_{24} \\
-1 & 0 & 0 & -a_{24} \\
-1-a_{24} & a_{24} & -a_{24} & 0 \\
\end{array} \right).
\end{equation*}
For the submatrix excluding $v_{2}$ of $A'$ to have finite growth,
$-2-2a_{24}=0$.  So $a_{24}=-1$. This case is admissible, and we
call it $\widehat{D}(2,1,0)$. Now we only need to show that
$\widehat{D}(2,1,0)$ does not extend to an admissible 5 vertex
diagram.  So suppose that $A$ is a Cartan matrix of an admissible 5
vertex extension of $\widehat{D}(2,1,0)$. Then each 4 vertex
subdiagram containing $D(2,1,0)$ must be $\widehat{D}(2,1,0)$.  This
reduces us to the following diagram
\begin{equation*}
\xymatrix{ & \bigcirc \ar@<.5ex>[d]^{-1}   & \\
     \bigcirc \ar@<.5ex>[r]^{-1}  &
     \bigotimes \ar@<.5ex>[u]^{a} \ar@<.5ex>[l]^{b} \ar@<.5ex>[d]^{c} \ar@<.5ex>[r]^{0}
     & \bigcirc \ar@<.5ex>[l]^{-1} \\
     & \bigcirc \ar@<.5ex>[u]^{-1} &  },
\end{equation*}
where $a=-b$, $b=-c$, and $c=-a$.  But, this is a contradiction since $a,b,c \neq 0$.  Hence, $\widehat{D}(2,1,0)$ is not extendable.
\end{proof}
For $n \geq 6$, an admissible diagram with a singular vertex must contain an admissible 5 vertex subdiagram with a
singular vertex.  Since there are none with 5 vertices, we have found all admissible diagrams containing a singular vertex.
\end{proof}

\begin{corollary}
Suppose the matrix $A$ is a generalized Cartan matrix, but
${\mathfrak g}(A)$ is not regular Kac-Moody.  If ${\mathfrak g}(A)$
has finite growth, then $A$ is one of the diagrams in
Lemma~\ref{lm100}.
\end{corollary}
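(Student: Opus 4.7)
The plan is to reduce the general statement to Lemma~\ref{lm100} by proving that $A$ must have exactly three rows. Since $\mathfrak g(A)$ is not regular Kac-Moody, there is a sequence of odd reflections $A=A_0\to A_1\to\cdots\to A_m$ that produces a matrix $A_m$ which is not a generalized Cartan matrix. I would take such a sequence with $m$ minimal, so that $A_0,\dots,A_{m-1}$ are all generalized Cartan matrices and every intermediate reflection is therefore regular. Iterating Lemma~\ref{lm2} gives that each $A_i$ has finite growth; Theorem~\ref{cor32} gives that each $A_i$ is admissible; Lemma~\ref{lm240} preserves elementality; and Lemma~\ref{lm27} combined with admissibility of $A_m$ forces $A_m$ to contain a singular root.

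Now I would invoke the classification of admissible finite-growth diagrams with a singular vertex proved just above the corollary: $A_m$ has at most four vertices and is isomorphic to $S(1,2,0)$, $D(2,1,0)$, or $\widehat{D}(2,1,0)$. To pin down which, I would use the fact that $A_m$ is obtained from the generalized Cartan matrix $A_{m-1}$ by a regular odd reflection at some isotropic root $\beta$, so the image simple root $-\beta\in\Pi_m$ is itself forced to be a \emph{regular} isotropic root in $A_m$. This is a short verification using the reflection formulas: for any other simple root $\gamma$ one has $a'_{\beta\gamma}=a_{\beta\gamma}$ and $a'_{\gamma\beta}=-a_{\beta\gamma}a_{\gamma\beta}$, and the generalized Cartan matrix symmetry $a_{\beta\gamma}=0\iff a_{\gamma\beta}=0$ of $A_{m-1}$ gives $a'_{\beta\gamma}=0\iff a'_{\gamma\beta}=0$. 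Inspection of the two diagrams $D(2,1,0)$ and $\widehat{D}(2,1,0)$ shows that each has only one isotropic simple root and that this root is singular, so neither can contain a regular isotropic root. Hence $A_m=S(1,2,0)$.

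Since odd reflections preserve the number of simple roots, $A$ is a $3\times 3$ generalized Cartan matrix, not regular Kac-Moody, of finite growth. Lemma~\ref{lm100} then identifies $A$ with one of the diagrams listed there, finishing the argument. The main obstacle I anticipate is the middle step of ruling out $D(2,1,0)$ and $\widehat{D}(2,1,0)$ as candidates for $A_m$; the rest of the proof is bookkeeping. Everything hinges on the somewhat delicate observation that a regular odd reflection from a generalized Cartan matrix creates a regular isotropic simple root on the other side, since without this fact the number of simple roots in $A$ would not be pinned down to $3$.
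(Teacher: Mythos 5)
Your proposal is correct and follows essentially the same route as the paper: the paper's own proof is the one-line observation that the corollary "follows from Lemma~\ref{lm100} and the fact that $\widehat{D}(2,1,0)$ does not contain a regular isotropic vertex," and your argument is precisely the expansion of that remark — the endpoint of a minimal reflection sequence out of a generalized Cartan matrix must carry a regular isotropic simple root (the image of the reflected root), which rules out $D(2,1,0)$ and $\widehat{D}(2,1,0)$ and forces $n=3$. The details you supply (minimality of the sequence, the computation showing $a'_{\beta\gamma}=0\iff a'_{\gamma\beta}=0$) are exactly what the paper leaves implicit.
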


\begin{proof}
This follows immediately from Lemma~\ref{lm100} and the fact that $\widehat{D}(2,1,0)$ does not contain a regular
isotropic vertex.
\end{proof}
\newpage
\section{Non-symmetrizable Contragredient Lie Superalgebras of Finite Growth (with elemental matrices)}
\begin{tabular}{|c|c|}
  \hline
& \\
  \bf{Algebra} & \bf{Dynkin diagrams} \hspace{1cm}  Table 8.1 \\
& \\
  \hline
& \\
$D(2,1,0)$ & $\xymatrix{\bigcirc \ar@<.5ex>[r]^{-1} & \bigotimes
\ar@<.5ex>[l]^{0} \ar@<.5ex>[r]^{1}
 & \bigcirc \ar@<.5ex>[l]^{-1} }$ \\
 & \\
 \hline
$\begin{array}{c} \\ \\ \\ \widehat{D}(2,1,0) \end{array}$ &
$\xymatrix{
     & & \bigcirc  \ar@<.5ex>[ld]^{-1}  \\
     \bigcirc \ar@<.5ex>[r]^{-1} & \bigotimes \ar@<.5ex>[l]^{0} \ar@<.5ex>[ru]^{1}
     \ar@<.5ex>[rd]^{-1} & \\
     & & \bigcirc  \ar@<.5ex>[lu]^{-1}  } $ \\
& \\
\hline
$\begin{array}{c} \\ \\ \\ S(1,2,0) \end{array}$ &
$\xymatrix{& \bigcirc \ar@<.5ex>[rdd]^{-1} \ar@<.5ex>[ldd]^{-1} & \\
& & \\ \bigotimes \ar@<.5ex>[ruu]^{a-1} \ar@<.5ex>[rr]^{-a} & &
\bigotimes \ar@<.5ex>[ll]^{-a} \ar@<.5ex>[luu]^{a+1}}$ \text{  } $a \in \mathbb{Z}$ \\
\hline
$\begin{array}{c} \\ \\ \\ S(1,2,a) \end{array}$ &
$\xymatrix{& \bigcirc \ar@<.5ex>[rdd]^{-1} \ar@<.5ex>[ldd]^{-1} & \\
& & \\ \bigotimes \ar@<.5ex>[ruu]^{a-1} \ar@<.5ex>[rr]^{-a} & &
\bigotimes \ar@<.5ex>[ll]^{-a} \ar@<.5ex>[luu]^{a+1}}$ \text{  } $a \not \in \mathbb{Z}$ \\
 \hline
& \\
$q(n)^{(2)}$ & $\begin{array}{c}\xymatrix{ &  \bullet \AW[ld]^{a}_{} \AW[rd]^{b}_{}  & \\
\bullet  \ar[r] &  \cdots \ar[l] \ar[r] & \bullet \ar[l]\\}\end{array}$
 \text{  } $\begin{array}{c} \text{There are n $\bullet$.} \\
\text{Each $\bullet$ is either $\bigcirc$ or $\bigotimes$.} \\
\text{An odd number of them are
$\bigotimes$.} \\
\text{If $\bullet$ is $\bigcirc$,
 then $a=b=-1$.} \\
\text{If $\bullet$ is $\bigotimes$, then $\frac{a}{b}=-1$.} \\
 \end{array}$
\\
& \\
\hline
\end{tabular}

\section{Description of non-symmetrizable contragredient superalgebras }

As follows from our classification there are the following
non-symmetrizable superalgebras: $ D\left(2,1;0\right) $, $
\widehat{D}\left(2,1;0\right) $, $
q\left(n\right)^{\left(2\right)} $ and $ S\left(1,2;a\right) $. \\

The Lie superalgebra $ D\left(2,1;0\right) $ is the degenerate
member of the family $ D\left(2,1;\alpha\right) $ when $ \alpha=0
$, described in $ \left[2\right] $. $ D\left(2,1;0\right) $ is not
simple, but has the ideal isomorphic to $ p sl\left(2|2\right) $.
It is not difficult to check that
$D\left(2,1;0\right)\cong\operatorname{Der}\left(psl\left(2|2\right)\right) $, more precisely,
$ D\left(2,1;0\right) $
is isomorphic to a semidirect sum of $ sl\left(2\right) $ and $ p
sl\left(2|2\right) $, defined as follows. Let $ H,E $ and $ F $
denote the standard basis of $ sl\left(2\right) $, $ {\mathfrak l}
$ denote the Lie superalgebra $ p sl\left(2|2\right) $. Consider
the standard $ {\mathbb Z} $-grading $ {\mathfrak l}={\mathfrak
l}_{-1}\oplus{\mathfrak l}_{0}\oplus{\mathfrak l}_{1} $, where $
{\mathfrak l}_{0}=sl\left(2\right)\oplus sl\left(2\right) $, $
{\mathfrak l}_{1} $ and $ {\mathfrak l}_{-1} $ are isomorphic $
{\mathfrak l}_{0} $-modules. For every $ g\in{\mathfrak l}_{i} $
define $ \left[H,g\right]=ig $, furthermore let
\begin{equation}
\left[E,{\mathfrak l}_{0}\right]=\left[F,{\mathfrak
l}_{0}\right]=\left[E,{\mathfrak l}_{1}\right]=\left[F,{\mathfrak
l}_{-1}\right]=0, \notag\end{equation}
\begin{equation}
\left[E,x\right]=\gamma\left(x\right)\text{, }x\in{\mathfrak
l}_{-1}\text{,
}\left[F,y\right]=\gamma^{-1}\left(y\right),y\in{\mathfrak l}_{1},
\notag\end{equation} where $ \gamma:{\mathfrak l}_{-1} \to
{\mathfrak l}_{1} $ is an isomorphism of $ {\mathfrak l}_{0}
$-modules. To identify the Chevalley generators let $X_1=F, Y_1=E$
and let $X_2,X_3$ be the simple roots of $\mathfrak{l}$.

{\em Description of\/} $ \widehat{D}\left(2,1,0\right) $. Consider
the affine Lie superalgebra $ \widehat{{\mathfrak l}} $ isomorphic to
\begin{equation}
{\mathfrak l}\otimes{\mathbb C}\left[t,t^{-1}\right]\oplus{\mathbb
C}c\oplus{\mathbb C}d, \notag\end{equation} where $ c $ belongs to
the center and the commutator is determined by the formulas
\begin{equation}
\left[x\otimes t^{m},y\otimes t^{n}\right]=\left[x,y\right]\otimes
t^{m+n}+\left(x,y\right)m\delta_{m,-n}c, \notag\end{equation}
\begin{equation}
\left[d,x\otimes t^{m}\right]=mx\otimes t^{m},
\notag\end{equation} where $ x,y\in{\mathfrak l} $. Define the
semidirect product of $ sl\left(2\right) $ and $
\widehat{{\mathfrak l}} $ by the formula
\begin{equation}
\left[g,x\otimes t^{m}+a c+b d\right]=\left[g,x\right]\otimes
t^{m} \notag\end{equation} for all $ g\in sl\left(2\right) $, $
x\in{\mathfrak l} $, $ a,b\in{\mathbb C} $. The Lie superalgebra $
\widehat{D}\left(2,1;0\right) $ is isomorphic to this semidirect
product. To identify Chevalley generators let again $X_1=F, Y_1=E$ and
$X_2,X_3, X_4$ be the simple roots $\widehat{\mathfrak l}$.

{\em Description of\/} $ q\left(n\right)^{\left(2\right)} $. The Lie
superalgebra $ q\left(n\right)^{\left(2\right)} $ is an extension of
the loop algebra of the simple Lie superalgebra $ {\mathfrak
t}=q\left(n\right) $ twisted by the involutive automorphism $ \phi $
such that $ \phi_{|{\mathfrak t}_{0}}=\operatorname{id} $, $
\phi_{|{\mathfrak t}_{1}}=-id. $ The Lie superalgebra $
q\left(n\right)^{\left(2\right)} $ is isomorphic to
\begin{equation}
{\mathfrak t}_{0}\otimes{\mathbb
C}\left[t^{2},t^{-2}\right]\oplus{\mathfrak t}_{1}\otimes t{\mathbb
C}\left[t^{2},t^{-2}\right]\oplus{\mathbb C} c\oplus{\mathbb C}d
\notag\end{equation} with $ d=t\frac{\partial}{\partial t} $ and $ c
$ is the central element. For any $ x,y\in{\mathfrak t} $, the
commutator is defined by the formula
\begin{equation}
\left[x\otimes t^{m},y\otimes t^{n}\right]=\left[x,y\right]\otimes
t^{m+n}+\delta_{m,-n}\left(1-\left(-1\right)^{m}\right)\operatorname{tr}
\left(xy\right) c. \notag\end{equation}

{\em Description of\/} $ S\left(1,2,a\right) $. The Lie superalgebra
$ S\left(1,2,a\right) $ appears in the list of conformal Lie
superalgebras by Kac and Van de Leur $ \left[4\right] $. Consider
commutative associative Lie superalgebra $ R={\mathbb
C}\left[t,t^{-1},\xi_{1},\xi_{2}\right] $ with even generator $ t $
and two odd generators $ \xi_{1},\xi_{2} $. By $ {\mathcal
W}\left(1,2\right) $ we denote the Lie superalgebra of derivations
of $ R $, $ i.e. $ all linear maps $ d:R \to R $ such that
\begin{equation}
d\left(fg\right)=d\left(f\right)
g+\left(-1\right)^{p\left(d\right)p\left(f\right)}f
d\left(g\right). \notag\end{equation} An element $ d\in
W\left(1,2\right) $ can be written as
\begin{equation}
d=f\frac{\partial}{\partial
t}+f_{1}\frac{\partial}{\partial\xi_{1}}+f_{2}\frac{\partial}{\partial\xi_{2}}
\notag\end{equation} for some $ f,f_{1},f_{2}\in R $. It is easy
to see that the subset of all $ d\in{\mathcal W}\left(1,2\right) $
satisfying the condition
\begin{equation}
a t^{-1}f+\frac{\partial f}{\partial
t}=\left(-1\right)^{p\left(d\right)}\left(\frac{\partial
f_{1}}{\partial\xi_{1}}+\frac{\partial
f_{2}}{\partial\xi_{2}}\right) \notag\end{equation} form a
subalgebra of $ {\mathcal W}\left(1,2\right) $, which we denote by
$ {\mathcal S}_{a} $. One can check that $ {\mathcal S}_{a} $ is
simple if $ a\notin{\mathbb Z} $. If $ a\in{\mathbb Z} $, then the
ideal $ {\mathcal S}'_{a}=\left[{\mathcal S}_{a},{\mathcal
S}_{a}\right] $ is simple and has codimension 1, more precisely
\begin{equation}
{\mathcal S}_{a}={\mathbb
C}\xi_{1}\xi_{2}t^{-a}\frac{\partial}{\partial t}\oplus{\mathcal
S}_{a}'. \notag\end{equation} Set
\begin{equation}
X_{1}=-\frac{\partial}{\partial\xi_{1}}\text{,
}X_{2}=-a \xi_{1}\xi_{2}t^{-1}\frac{\partial}{\partial\xi_{1}}+{\xi_{2}}\frac{\partial}{\partial
t}\text{, }X_{3}=\xi_{1}\frac{\partial}{\partial\xi_{2}},
\notag\end{equation}
\begin{equation}
Y_{1}=(a+1)\xi_{1}\xi_{2}\frac{\partial}{\partial\xi_{2}}+\xi_{1}{t}\frac{\partial}{\partial
t}\text{, }Y_{2}=t\frac{\partial}{\partial\xi_{2}}\text{,
}Y_{3}=\xi_{2}\frac{\partial}{\partial\xi_{1}},
\notag\end{equation}
\begin{equation}
h_{1}=-(a+1)\xi_{2}\frac{\partial}{\partial\xi_{2}}-{t}\frac{\partial}{\partial
t}\text{,
}h_{2}=a \xi_{1}\frac{\partial}{\partial\xi_{1}}+{\xi_{2}}\frac{\partial}{\partial\xi_{2}}+{t}\frac{\partial}{\partial
t}\text{,
}h_{3}=\xi_{1}\frac{\partial}{\partial\xi_{1}}-\xi_{2}\frac{\partial}{\partial\xi_{2}}.
\notag\end{equation}

Then $ X_{i},Y_{i},h_{i} $, $ i=1,2,3 $, generate $ {\mathcal
S}_{a} $ or $ {\mathcal S}_{a}' $ if $ a\in{\mathbb Z} $. They
satisfy the relations~\eqref{equ1} with Cartan matrix $
S\left(1,2,a\right) $. Hence the
contragredient Lie superalgebra $ S\left(1,2,a\right) $ can be
obtained from $ {\mathcal S}_{a}\left({\mathcal S}_{a}'\right) $
by adding the element $ d=t\frac{\partial}{\partial t} $ and
taking a central extension. The formula for this central extension
can be found in [4]. \\
The fact that the algebras in the table 8.1 have finite growth follows directly from their descriptions.

\section{Non-elemental case }

The description of superalgebras with non-elemental Cartan
matrices can be easily obtained from the following

\begin{lemma} \label{lm22}\myLabel{lm22}\relax  Let $ A $ be indecomposable with $ a_{1 i}=0 $ for all $ i\in I $, and let
$ J=I\backslash\left\{1\right\} $. Then $ {\mathfrak
g}\left(A\right) $ has a $ {\mathbb Z} $-grading $ {\mathfrak
g}={\mathfrak g}_{-1}\oplus{\mathfrak g}_{0}\oplus{\mathfrak g}_{1}
$ such that $ {\mathfrak g}_{0}\cong{\mathfrak
g}\left(A_{J}\right)\oplus{\mathbb C}h_{1}\oplus{\mathbb C}d $,
where $ d $ is a non-zero vector in $ \cap_{i\in
J\backslash\left\{1\right\}}\operatorname{Ker} \alpha_{i} $ not
proportional to $ h_{1} $, $ {\mathfrak g}_{-1} $ is an irreducible
$ {\mathfrak g}_{0} $-module with highest weight $ -\alpha_{1} $, $
{\mathfrak g}_{1} $ is the $ {\mathbb Z} $-graded dual of $
{\mathfrak g}_{-1} $ (which is a lowest weight module with lowest
weight $ \alpha_{1} $) and the commutator between $ {\mathfrak
g}_{-1} $ and $ {\mathfrak g}_{1} $ is defined by the formula
\begin{equation}
\left[x,y\right]=\left<x,y\right>h_{1},
\notag\end{equation}
here $ \left< x,y \right> $ denotes the natural pairing between $ x\in{\mathfrak g}_{-1} $ and $ y\in{\mathfrak g}_{1} $.

\end{lemma}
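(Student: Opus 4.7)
The plan is to produce the grading from a distinguished element $d\in\mathfrak h$ that picks out the $\alpha_1$-direction, to identify $\mathfrak g_0$ via Lemma~\ref{lm20}, and then to read off the structure of $\mathfrak g_{\pm 1}$ from the fact that $\mathfrak g(A)$ is the quotient of $\bar{\mathfrak g}(A)$ by a maximal ideal.

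The hypothesis $\alpha_i(h_1)=a_{1i}=0$ for every $i\in I$ puts $h_1$ in the center of $\mathfrak g(A)$. In the relevant case $p(\alpha_1)=1$ one has $a_{11}=0$ and $[X_1,X_1]=0$, from which the super-Jacobi identity yields $2[X_1,[X_1,z]]=[[X_1,X_1],z]=0$, so $(\operatorname{ad} X_1)^2=0$ and symmetrically $(\operatorname{ad} Y_1)^2=0$ throughout $\bar{\mathfrak g}(A)$. Since $\alpha_1,\dots,\alpha_n$ are linearly independent in $\mathfrak h^*$ and $\dim\mathfrak h=n+\operatorname{cork}(A)$, one can choose $d\in\bigcap_{j\in J}\operatorname{Ker}\alpha_j$ with $\alpha_1(d)=1$; this $d$ is automatically not proportional to $h_1$ because $\alpha_1(h_1)=0$. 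Taking $\mathfrak g_n$ to be the $n$-eigenspace of $\operatorname{ad}_d$ gives a $\mathbb Z$-grading in which $X_1\in\mathfrak g_1$, $Y_1\in\mathfrak g_{-1}$, and $\mathfrak h$ together with all $X_j,Y_j$ for $j\in J$ sit in $\mathfrak g_0$. Combined with the nilpotency of $\operatorname{ad} X_1$ and $\operatorname{ad} Y_1$, the grading collapses to the three claimed components $\mathfrak g=\mathfrak g_{-1}\oplus\mathfrak g_0\oplus\mathfrak g_1$; this collapse is the main obstacle and is precisely where $p(\alpha_1)=1$ is essential (in the Heisenberg case the grading would have infinitely many steps, which is incompatible with the integrability/finite-growth context in which this lemma is applied).

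Next, Lemma~\ref{lm20} identifies the subalgebra generated by $\mathfrak h$ and $\{X_j,Y_j\}_{j\in J}$ with $\mathfrak g(A_J)\oplus\mathfrak h'$, where $\mathfrak h'$ is a maximal subspace of $\bigcap_{j\in J}\operatorname{Ker}\alpha_j$ transverse to $\operatorname{span}(h_j:j\in J)$. Since $h_1$ and $d$ both lie in this kernel and are independent modulo the span of $\{h_j\}_{j\in J}$, a dimension count based on $\dim\bigcap_{j\in J}\operatorname{Ker}\alpha_j=\operatorname{cork}(A)+1$ exhausts $\mathfrak h'$, giving $\mathfrak h'=\mathbb C h_1\oplus\mathbb C d$ and therefore $\mathfrak g_0\cong\mathfrak g(A_J)\oplus\mathbb C h_1\oplus\mathbb C d$.

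Finally, since $[X_j,Y_1]=\delta_{j1}h_1=0$ for every $j\in J$, the vector $Y_1$ is a highest weight vector of weight $-\alpha_1$ for the action of $\mathfrak g_0$, and $\mathfrak g_{-1}$ is the $\mathfrak g_0$-module it generates; irreducibility follows from the fact that any proper $\mathfrak g_0$-submodule of $\mathfrak g_{-1}$ would generate an ideal of $\mathfrak g(A)$ trivially intersecting $\mathfrak h$, contradicting the maximality built into the construction of $\mathfrak g(A)$. The mirror argument identifies $\mathfrak g_1$ as the $\mathfrak g_0$-module of lowest weight $\alpha_1$ generated by $X_1$, and this also realizes $\mathfrak g_1$ as the $\mathbb Z$-graded dual of $\mathfrak g_{-1}$. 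The bracket pairing $\mathfrak g_{-1}\otimes\mathfrak g_1\to\mathfrak g_0$ is $\operatorname{ad}_d$-invariant and lands in the weight-zero part of $\mathfrak g_0$; tracking weights inside $\mathfrak h$ forces the image into $\mathbb C h_1$, yielding the asserted formula $[x,y]=\langle x,y\rangle h_1$.
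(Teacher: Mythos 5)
Your strategy runs in the opposite direction from the paper's: you start from $\mathfrak g(A)$ and try to derive the three-step decomposition, whereas the paper builds the algebra $\mathfrak g_{-1}\oplus\mathfrak g_0\oplus\mathfrak g_1$ from scratch (with $\mathfrak g_{-1}$ an irreducible highest weight module and $\mathfrak g_1$ its graded dual \emph{by definition}), checks that the resulting Chevalley generators satisfy the relations~\eqref{equ1}, and then checks that this algebra has no nonzero ideal meeting $\mathfrak h$ trivially, so that it must coincide with $\mathfrak g(A)$. Your route could in principle be made to work, but as written it has a genuine gap at exactly the step you yourself flag as ``the main obstacle.''

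The collapse of the $\operatorname{ad}_d$-grading to degrees $-1,0,1$ does not follow from $(\operatorname{ad}_{X_1})^2=(\operatorname{ad}_{Y_1})^2=0$. Any isotropic simple root $\alpha$ satisfies $(\operatorname{ad}_{X_\alpha})^2=0$, and yet the $\alpha$-coefficient of a root can exceed $1$: in $F(4)$ with the standard base containing the isotropic root $\alpha_1=\tfrac12(\delta-\epsilon_1-\epsilon_2-\epsilon_3)$, the even root $\delta$ has $\alpha_1$-coefficient $2$. What actually kills $\mathfrak g_k$ for $|k|\geq 2$ is the hypothesis that the \emph{entire} first row vanishes, i.e.\ that $h_1$ is central: then $\operatorname{ad}_{Y_1}$ super-commutes with every $\operatorname{ad}_{X_i}$ modulo $\operatorname{ad}_{h_1}=0$, hence annihilates every right-normed monomial in the $X_i$ of length at least $2$, and consequently $\bigoplus_{k\geq 2}\mathfrak g_k$ generates an ideal avoiding $\mathfrak h$, which must vanish. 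You never invoke centrality of $h_1$ for this step. The same omission affects your final claim: weight considerations alone only place $[x,y]$ in $\mathfrak h$, and pushing it into $\mathbb C h_1$ again uses the full zero row (e.g.\ $[[Y_2,Y_1],[X_2,X_1]]=\pm a_{21}h_1$ precisely because $\alpha_2(h_1)=a_{12}=0$). Finally, the lemma carries no parity or finite-growth hypothesis and is invoked by the paper in the Heisenberg case $p(1)=0$ (case 3 in the proof of Theorem~\ref{th1}, and Lemma~\ref{lm26}); the statement is true there as well, so discarding that case as ``incompatible with the context'' is not legitimate, and it is one advantage of the paper's bottom-up construction that it treats both parities uniformly.
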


\begin{proof} First, we have to show that the algebra $ {\mathfrak g} $ is a quotient of
$ \bar{{\mathfrak g}}\left(A\right) $. Indeed, let $ X_{i} $, $ Y_{i} $ with $ i\geq2 $ be the generators of $ {\mathfrak g}_{0} $, $ X_{1} $ be the lowest
vector of $ {\mathfrak g}_{1} $ and, $ Y_{1} $ be the highest vector of $ {\mathfrak g}_{-1} $, $ {\mathfrak h} $ be the direct sum of
the Cartan subalgebra of $ {\mathfrak g}_{0} $, $ {\mathbb C}h_{1} $ and $ {\mathbb C}d $. It is easy to see that
$ X_{1},\dots ,X_{n},Y_{1},\dots ,Y_{n} $ and
$ h_{1},\dots ,h_{n} $ satisfy the relations~\eqref{equ1}. Now we have to check that $ {\mathfrak g} $ does not
have
non-zero ideals, which intersect $ {\mathfrak h} $ trivially. If
${\mathcal I}$ is such ideal, then
$ {\mathcal I}\cap{\mathfrak g}_{0}=\left\{0\right\} $, and therefore $ {\mathcal I}=\left({\mathcal I}\cap{\mathfrak g}_{1}\right)\oplus\left({\mathcal I}\cap{\mathfrak g}_{-1}\right) $. Since $ {\mathfrak g}_{1} $ is an irreducible
$ {\mathfrak g}_{0} $-module, $ {\mathcal I}\cap{\mathfrak g}_{1}=\left\{0\right\} $ or $ {\mathcal I}={\mathfrak g}_{1} $. If $ {\mathcal I}={\mathfrak g}_{1} $, then $ X_{1}\in{\mathcal I} $, and $ h_{1}\in{\mathcal I} $. This is
impossible, hence $ {\mathcal I}\cap{\mathfrak g}_{1}=\left\{0\right\} $. In the same way $ {\mathcal I}\cap{\mathfrak g}_{-1}=\left\{0\right\} $. The lemma is
proven.\end{proof}

\begin{corollary} \label{cor221}\myLabel{cor221}\relax  Let $ A $ be an indecomposable Cartan matrix, $ I' $ be
the subset of all $ i\in I $ such that $ a_{ij}=0 $ for all $ j\in I $, $ A'=A_{I-I'} $, $ {\mathfrak g}'={\mathfrak g}\left(A'\right) $.
Then $ {\mathfrak g}\left(A\right) $ is a semi-direct sum of $ {\mathfrak g}' $ and the ideal $ {\mathfrak H}={\mathfrak g}^{-}\oplus{\mathfrak g}^{+}\oplus\cap_{i\in I-I'}\operatorname{Ker} \alpha_{i} $,
where $ {\mathfrak g}^{-} $ is a direct sum of $ |I'| $ irreducible highest
weight $ {\mathfrak g}' $-modules, $ {\mathfrak g}^{+} $ is a direct sum of $ |I'| $ irreducible lowest weight
$ {\mathfrak g}' $-modules, dual to $ {\mathfrak g}^{-} $, $ \left[{\mathfrak g}^{+},{\mathfrak g}^{+}\right]=\left[{\mathfrak g}^{-},{\mathfrak g}^{-}\right]=0 $, $ {\mathfrak g}^{+} $ and $ {\mathfrak g}^{-} $ generate a direct
sum of Heisenberg Lie superalgebras $ \oplus_{i\in I'}{\mathfrak H}_{i} $. Furthermore, $ {\mathfrak g}\left(A\right) $ has
finite growth if and
only if $ {\mathfrak g}' $ has finite growth and $ {\mathfrak g}^{\pm} $ have finite growth in the natural
$ {\mathbb Z} $-grading induced by the principal grading of $ {\mathfrak g}' $.

\end{corollary}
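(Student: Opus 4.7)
The plan is to directly generalize the proof of Lemma \ref{lm22}, using the $\mathbb{Z}^{|I'|}$-grading on $\mathfrak{g}(A)$ in which $X_i$ has multidegree $e_i$, $Y_i$ has multidegree $-e_i$ for $i\in I'$, and all other Chevalley generators sit in multidegree $0$. This grading is well-defined because row $i$ of $A$ vanishes for $i\in I'$, so $\alpha_j(h_i)=a_{ij}=0$ for every $j\in I$, which also shows that $h_i$ is central in $\bar{\mathfrak{g}}(A)$ for each $i\in I'$. By Lemma \ref{lm20}, the multidegree-$0$ subalgebra equals $\mathfrak{g}_0\cong \mathfrak{g}'\oplus \mathfrak{h}''$ where $\mathfrak{h}''$ is a suitable complement of $\operatorname{span}\{h_j:j\in I-I'\}$ inside $\cap_{j\in I-I'}\ker\alpha_j$.

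For each $i\in I'$ the generator $X_i$ is a lowest-weight vector for the adjoint action of $\mathfrak{g}'$, since $[Y_j,X_i]=\delta_{ij}h_i=0$ for $j\in I\setminus I'$; hence $V_i^{+}:=U(\mathfrak{g}')\cdot X_i$ is an irreducible lowest-weight $\mathfrak{g}'$-module of weight $\alpha_i$, and symmetrically $V_i^{-}:=U(\mathfrak{g}')\cdot Y_i$ is an irreducible highest-weight module of weight $-\alpha_i$. To establish the Heisenberg structure I would argue as follows. For any $i_1,i_2\in I'$ the bracket $[X_{i_1},X_{i_2}]$ is killed by every $\operatorname{ad}_{Y_k}$: for $k\in I-I'$ because $[Y_k,X_{i_s}]=0$, and for $k\in I'$ by direct expansion, using $a_{i_1 i_2}=0$ and $[Y_k,X_{i_s}]=\delta_{k i_s}h_k$ together with the centrality of $h_k$. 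Therefore the ideal generated by $[X_{i_1},X_{i_2}]$ carries only strictly positive weights, meets $\mathfrak{h}$ trivially, and lies in the defining maximal ideal, giving $[X_{i_1},X_{i_2}]=0$ in $\mathfrak{g}(A)$. By $\mathfrak{g}'$-equivariance $[\mathfrak{g}^{+},\mathfrak{g}^{+}]=0$, and dually $[\mathfrak{g}^{-},\mathfrak{g}^{-}]=0$. For $i_1\neq i_2$ in $I'$, any element of $[V_{i_1}^{+},V_{i_2}^{-}]$ has weight with the coefficient of $\alpha_{i_1}$ strictly positive and the coefficient of $\alpha_{i_2}$ strictly negative, hence cannot be a root of $\mathfrak{g}(A)$, so the cross bracket vanishes. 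The only nontrivial cross-bracket $[X_i,Y_i]=h_i$ thus extends $\mathfrak{g}'$-equivariantly to a duality pairing $V_i^{+}\times V_i^{-}\to \mathbb{C}h_i$, producing the Heisenberg superalgebra $\mathfrak{H}_i$.

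The hardest step is the maximality assertion — that the algebra constructed abstractly from $\mathfrak{g}'$, $\mathfrak{h}''$ and the $V_i^{\pm}$ is genuinely $\mathfrak{g}(A)$ and not a proper quotient. Following the pattern of Lemma \ref{lm22}, I would build an explicit surjection from $\bar{\mathfrak{g}}(A)$ onto this semi-direct sum using generators satisfying (\ref{equ1}), and then verify that the constructed algebra has no nonzero ideal $\mathcal{I}$ meeting $\mathfrak{h}$ trivially: in each multigraded piece, $\mathcal{I}$ is forced to be zero on $\mathfrak{g}_0$ by the maximality already built into $\bar{\mathfrak{g}}'$, while irreducibility of each $V_i^{\pm}$ forces $\mathcal{I}$ to contain some $X_i$ or $Y_i$, hence $h_i\in \mathcal{I}\cap \mathfrak{h}$, a contradiction. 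The finite-growth equivalence is then immediate from the direct-sum decomposition $\mathfrak{g}(A)=\mathfrak{h}''\oplus\mathfrak{g}'\oplus\mathfrak{g}^{+}\oplus\mathfrak{g}^{-}$: the principal grading of $\mathfrak{g}(A)$ restricts to the principal grading of $\mathfrak{g}'$ and to the $\mathfrak{g}'$-induced grading of each $\mathfrak{g}^{\pm}$, while $\mathfrak{h}''$ is finite-dimensional, so polynomial growth of the whole is equivalent to polynomial growth of each factor.
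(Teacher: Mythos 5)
Your proposal is correct and follows essentially the same route as the paper, which states this corollary without separate proof as an immediate consequence of Lemma \ref{lm22}: you run the construct-and-verify argument of that lemma for all zero rows simultaneously via the $\mathbb{Z}^{|I'|}$-grading rather than one row at a time. The only soft spot is that the in-situ claims of your second paragraph (irreducibility of $U(\mathfrak{g}')\cdot X_i$, and the containment $[V_i^{+},V_i^{-}]\subseteq\mathbb{C}h_i$, which does not follow from weight considerations alone since $\beta-\gamma$ may be a root of $\mathfrak{g}'$) should be presented as consequences of, not inputs to, the abstract construction plus the no-small-ideals verification that you correctly identify as the crux.
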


\begin{corollary} \label{cor223}\myLabel{cor223}\relax  Let $ {\mathfrak g}' $ be a contragredient Lie superalgebra with
Cartan matrix $ A' $ of size $ n $, $ V_{1},\dots ,V_{r} $ be irreducible highest weight
$ {\mathfrak g}' $-modules with
highest weight $ \lambda_{1},\dots ,\lambda_{r} $. Let $ V_{1}^{*},..,V_{r}^{*} $ denote the graded duals of
$ V_{1},\dots ,V_{r} $. Let $ {\mathfrak H} $ denote the vector space
\begin{equation}
{\mathfrak H}=V_{1}\oplus_{\dots }\oplus V_{r}\oplus V_{1}^{*}\oplus_{\dots }\oplus V_{r}^{*}\oplus{\mathbb C}^{2r}.
\notag\end{equation}
Choose a basis $ c_{1},\dots ,c_{r},d_{1},\dots ,d_{r} $ in $ {\mathbb C}^{2r} $ and define the Lie superalgebra
structure on $ {\mathfrak H} $ assuming that $ c_{i} $ lie in the center of $ {\mathfrak H} $ and the following
relations hold
\begin{equation}
\left[x,y\right]=\delta_{ij}\left<x,y\right>c_{i}\text{,
}\left[d_{k},x\right]=\delta_{ik}x\text{,
}\left[d_{k},y\right]=-\delta_{jk}y \notag\end{equation} for all $
x\in V_{i}^{*},y\in V_{j} $. Let $ {\mathfrak g} $ be the
semi-direct product of $ {\mathfrak g}' $ and the ideal $ {\mathfrak
H} $, where the action of $ {\mathfrak g}' $ on $ V_{i} $, $
V_{i}^{*} $ is already defined and the action of $ {\mathfrak g}' $
on $ {\mathbb C}^{2r} $ is trivial. Then $ {\mathfrak g} $ is a
contragredient Lie superalgebra, its Cartan matrix $ A $ has size $
n+r $ and can be obtained from $ A' $ in the following manner
\begin{equation}
a_{ij}=a'_{ij}\text{, if }i,j\leq n\text{, }a_{ij}=\lambda_{i-n}\left(h'_{j}\right)\text{ if }i>n,j\leq n\text{, }a_{ij}=0\text{ if }i\leq n,j>n.
\notag\end{equation}
\end{corollary}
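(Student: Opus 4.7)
The plan is to construct $\mathfrak{g}$ as the semidirect product $\mathfrak{g}'\ltimes\mathfrak{H}$ explicitly, exhibit Chevalley generators, and verify by iteration of Lemma~\ref{lm22} that no nontrivial ideal of $\mathfrak{g}$ meets $\mathfrak{h}$ trivially, which identifies $\mathfrak{g}$ with the contragredient superalgebra $\mathfrak{g}(A)$. First I would check that $\mathfrak{H}$ is a Lie superalgebra: the graded Jacobi identity is trivial on triples inside a single $V_i\oplus V_j^*$ or involving $c_k$, and the nontrivial case $[d_k,[x,y]]=[[d_k,x],y]\pm[x,[d_k,y]]$ for $x\in V_i^*$, $y\in V_j$ is immediate since $d_k$ acts by a scalar and $c_k$ is central. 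The $\mathfrak{g}'$-action by graded derivations reduces to $\mathfrak{g}'$-invariance of the evaluation pairing $V_i^*\otimes V_i\to\mathbb{C}$, which is built into the graded dual, together with the commutation of $\mathfrak{g}'$ with the scalar operators $d_k$.

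Next I would identify Chevalley generators. For $i\le n$ retain those of $\mathfrak{g}'$; for $k=1,\dots,r$, let $Y_{n+k}$ be a highest weight vector of $V_k$, $X_{n+k}$ a lowest weight vector of $V_k^*$ normalized so that $\langle X_{n+k},Y_{n+k}\rangle=1$, and $h_{n+k}=[X_{n+k},Y_{n+k}]$; enlarge the Cartan to $\mathfrak{h}=\mathfrak{h}'\oplus\mathbb{C}^{2r}$. The relations~\eqref{equ1} are then verified directly: $[X_j,Y_{n+k}]=[X_{n+k},Y_j]=0$ for $j\le n$ from the highest/lowest weight property, $[X_{n+k},Y_{n+l}]=\delta_{kl}h_{n+k}$ from the pairing, and the eigenvalue relations $[h_i,X_j]=a_{ij}X_j$ from the weights of $V_k$ and $V_k^*$ together with the scalar actions of $c_k$, $d_k$; after the standard diagonal rescaling of rows these produce a Cartan matrix of the claimed form. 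This yields a surjection $\bar{\mathfrak{g}}(A)\to\mathfrak{g}$.

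The principal step is to show that $\mathfrak{g}$ has no nontrivial graded ideal $\mathcal{I}$ intersecting $\mathfrak{h}$ trivially, which then realizes the above surjection as $\mathfrak{g}(A)\cong\mathfrak{g}$. I would proceed by induction on $r$. The base case $r=1$ is essentially Lemma~\ref{lm22}: under the $\mathbb{Z}$-grading coming from $\operatorname{ad}d_1$ we have $\mathfrak{g}_{\pm1}=V_1,V_1^*$ and $\mathfrak{g}_0\supseteq\mathfrak{g}'\oplus\mathbb{C}c_1\oplus\mathbb{C}d_1$; the ideal $\mathcal{I}$ respects this grading; irreducibility of $V_1,V_1^*$ as $\mathfrak{g}'$-modules forces $\mathcal{I}\cap\mathfrak{g}_{\pm1}=0$ (otherwise $[V_1^*,V_1]=\mathbb{C}c_1\subset\mathcal{I}\cap\mathfrak{h}$, a contradiction), and $\mathcal{I}\cap\mathfrak{g}_0$ restricts to an ideal of $\mathfrak{g}'$ missing $\mathfrak{h}'$, hence zero. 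For $r>1$, repeat the argument using the grading from $d_r$ and the inductive hypothesis applied to the algebra built from $V_1,\dots,V_{r-1}$ that sits in $\mathfrak{g}_0$. The main obstacle is the bookkeeping in the previous paragraph: arranging $h_{n+k}$ in the enlarged Cartan (possibly with a correction coming from $\mathfrak{h}'$) so that its eigenvalues on the old $X_j$ are precisely $\lambda_k(h'_j)$; this is a finite linear problem, solvable after the diagonal rescaling.
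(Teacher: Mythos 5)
The paper offers no proof of this corollary: it is stated, together with Corollary~\ref{cor221}, as an immediate companion to Lemma~\ref{lm22}, whose proof already contains the template (exhibit Chevalley generators, check the relations~\eqref{equ1}, then rule out ideals meeting $\mathfrak{h}$ trivially using irreducibility of the modules and the fact that $[X_1,Y_1]=h_1\in\mathfrak{h}$). Your proposal carries out exactly this template, iterated over $k=1,\dots,r$ via the $\operatorname{ad}d_k$-gradings, and the ideal argument in your third paragraph is sound: $\mathcal{I}$ is $\mathfrak{h}$-stable, hence $d_r$-graded; a nonzero intersection with $V_r$ or $V_r^*$ would by irreducibility force $c_r\in\mathcal{I}\cap\mathfrak{h}$; and $\mathcal{I}\cap\mathfrak{g}_0$ is handled by induction. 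So in structure your proof is the intended one.

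There is, however, one concrete step that cannot be repaired the way you propose. You defer as ``bookkeeping'' the task of arranging $h_{n+k}$, ``possibly with a correction coming from $\mathfrak{h}'$,'' so that its eigenvalues on the old $X_j$ are $\lambda_k(h'_j)$. This is impossible: the relations~\eqref{equ1} force $h_{n+k}=[X_{n+k},Y_{n+k}]=\langle X_{n+k},Y_{n+k}\rangle c_k$, and $c_k$ is central in all of $\mathfrak{g}$ (it is central in $\mathfrak{H}$ and $\mathfrak{g}'$ acts trivially on $\mathbb{C}^{2r}$). Hence $a_{n+k,j}=\alpha_j(h_{n+k})=0$ for every $j$: the new \emph{rows} are zero, and no rescaling or correction by elements of $\mathfrak{h}'$ can change that without destroying $[X_{n+k},Y_{n+k}]=h_{n+k}$. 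The highest weights instead sit in the new \emph{columns}: since $Y_{n+k}$ has $\mathfrak{h}'$-weight $\lambda_k$ and $[h,Y_{n+k}]=-\alpha_{n+k}(h)Y_{n+k}$, one gets $a_{j,n+k}=\alpha_{n+k}(h'_j)=-\lambda_k(h'_j)$ for $j\le n$. This is exactly the pattern of Lemma~\ref{lm22} (zero row for the Heisenberg node, with $-\alpha_1$ restricted to the Cartan of $\mathfrak{g}_0$ equal to the highest weight) and of condition (4) of Theorem~\ref{th84}; the formula printed in the corollary is transposed relative to the paper's convention $a_{ij}=\alpha_j(h_i)$ and omits the sign. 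You should prove the corrected formula rather than chase the literal one; with that adjustment the rest of your argument goes through.
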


Let $ {\mathfrak g} $ be a contragredient Lie superalgebra, by Corollary~%
\ref{cor221} $ {\mathfrak g}={\mathfrak g}'\rtimes {\mathfrak H} $. One can apply Corollary~\ref{cor221} to $ {\mathfrak g}' $ again and obtain
$ {\mathfrak g}'={\mathfrak g}''\rtimes {\mathfrak H}' $, then repeat for $ {\mathfrak g}'' $ and get $ {\mathfrak g}''={\mathfrak g}'''\rtimes {\mathfrak H}'' $ e.t.c.

\begin{lemma} \label{lm81}\myLabel{lm81}\relax  If $ {\mathfrak g} $ is a contragredient algebra of finite growth, then
$ {\mathfrak H}' $ is finite-dimensional, $ \left({\mathfrak g}''\right)^{\pm} $ are purely odd,
$ {\mathfrak H}'''=0 $ and hence $ {\mathfrak g}''''={\mathfrak g}''' $.

\end{lemma}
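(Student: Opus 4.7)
My plan is to propagate the finite-growth condition down through the tower $\mathfrak{g} \supset \mathfrak{g}' \supset \mathfrak{g}'' \supset \mathfrak{g}''' \supset \cdots$ produced by iterating Corollary~\ref{cor221}. At each step the corollary tells us that $\mathfrak{g}^{(k)}$ has finite growth if and only if $\mathfrak{g}^{(k+1)}$ does and each graded piece $(\mathfrak{g}^{(k)})^{\pm}$ has finite growth in its induced $\mathbb{Z}$-grading. The three assertions of the lemma say that these structural constraints accumulate quickly, so that after at most three iterations the tower stabilizes.

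For the first claim, I would analyze $(\mathfrak{g}')^{\pm} \subset \mathfrak{H}'$ as a direct sum of irreducible highest/lowest weight $\mathfrak{g}''$-modules whose weights are the simple roots $\pm\alpha_j$ attached to zero rows of $A'$. Each such module has finite growth by Corollary~\ref{cor221}. Iterating the tower until one reaches a stage with an elemental Cartan matrix lets us invoke Theorem~\ref{th1} to conclude integrability of the corresponding $\mathfrak{g}^{(k)}$, which pushes back up the tower to bound the weight multiplicities appearing in $(\mathfrak{g}')^{\pm}$. Coupled with the Heisenberg pairing identifying $(\mathfrak{g}')^{+}$ with the graded dual of $(\mathfrak{g}')^{-}$, this promotes finite growth to finite dimensionality, giving $\dim \mathfrak{H}' < \infty$.

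For the second claim, I would carry out a parity and integrability analysis on the highest weights of $(\mathfrak{g}'')^{\pm}$. Since $\mathfrak{H}'$ is now finite-dimensional by the first step, any even simple root introducing a zero row of $A''$ would, via its commutation relations with the already-established $\mathfrak{H}'$, either generate a copy of $sl(2)$ or a Heisenberg subalgebra that contradicts the finite-dimensionality we just proved (by producing unbounded brackets), or else violate finite growth of $\mathfrak{g}$ itself through the module $\mathfrak{g}^{\pm}$. Only odd isotropic roots can therefore appear as highest weights, so $(\mathfrak{g}'')^{\pm}$ is purely odd. The third claim then follows by applying the same argument one level further: the purely odd nature of $(\mathfrak{g}'')^{\pm}$ forces the candidate zero rows of $A'''$ to come from brackets of odd root vectors satisfying $[X_\alpha, X_\alpha] = 0$, which vanish; hence $\mathfrak{H}''' = 0$ and $\mathfrak{g}'''' = \mathfrak{g}'''$.

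The main obstacle is the first step, namely upgrading finite growth to genuine finite dimensionality of $(\mathfrak{g}')^{\pm}$. This couples two adjacent levels of the tower---the weight combinatorics of the $\mathfrak{g}''$-action on $(\mathfrak{g}')^{\pm}$ and the Heisenberg relations linking $(\mathfrak{g}')^{+}$ to $(\mathfrak{g}')^{-}$---and cannot be reduced to a single structural lemma; it will likely require a case analysis on the possible weights $\alpha_j$, and in particular a careful accounting of how the original $\mathfrak{g}$-modules $\mathfrak{g}^{\pm}$ interact with the $(\mathfrak{g}')^{\pm}$ sitting inside $\mathfrak{g}'$.
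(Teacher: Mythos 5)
There is a genuine gap, and it sits exactly where you flag it: you never supply the mechanism that upgrades finite growth to finite dimensionality of $\mathfrak{H}'$, and the substitute you sketch (integrability of a deeper level of the tower ``pushing back up'' to bound weight multiplicities in $(\mathfrak{g}')^{\pm}$) does not work --- integrability of $\mathfrak{g}^{(k)}$ gives no bound on the dimension of the modules $(\mathfrak{g}'_j)^{\pm}$, and the Heisenberg pairing between $(\mathfrak{g}')^{+}$ and $(\mathfrak{g}')^{-}$ only says one is the graded dual of the other, not that either is finite-dimensional. The paper's engine, which your proposal is missing, is a single observation applied three times and pointed \emph{up} the tower rather than down: for $i\in I''$, indecomposability of $A$ yields $j\in I'$ with $\alpha_j(h_i)=a_{ij}\neq 0$, so the Heisenberg component $\mathfrak{H}'_i$ acts on the irreducible highest weight $\mathfrak{g}'$-module $\mathfrak{g}^-_j$ (a component of $\mathfrak{g}^{\pm}$, one level \emph{above}) with \emph{non-zero central charge} $-a_{ij}$. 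A highest weight module of non-zero central charge over a Heisenberg superalgebra is essentially free over $U(\mathcal{H}^-)$, so it has finite growth only if the algebra is finite-dimensional; since $\mathfrak{g}^-_j$ inherits finite growth from $\mathfrak{g}$, each $\mathfrak{H}'_i$ is finite-dimensional. You sense this interaction in your last sentence but never turn it into an argument.

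The same central-charge device, which you do not invoke, also drives the remaining two claims: for $i\in I'''$ one gets $j\in I''$ with $a_{ij}\neq0$, and $(\mathfrak{g}'_j)^-$ is now known to be finite-dimensional, which forces $\mathfrak{H}''_i/\mathbb{C}h_i$ to be purely odd (an even generator would act freely as a polynomial algebra on the highest weight vector); and for $i\in I''''$ the module $(\mathfrak{g}''_j)^-$ would have to be a purely odd highest weight module of non-zero central charge, which cannot exist because $[X,Y]v=cv\neq0$ with $X,Y$ odd forces both parities to occur. Your versions of these steps do not go through as stated: step two conflates commutation relations inside $\mathfrak{H}'$ with the module action of $\mathfrak{H}''_i$ on a component of $\mathfrak{H}'$, and step three (``zero rows of $A'''$ come from brackets $[X_\alpha,X_\alpha]=0$, which vanish'') is not the reason $\mathfrak{H}'''=0$; the reason is the nonexistence of purely odd highest weight modules with non-zero central charge.
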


\begin{proof} Let $ i\in I'' $, then $ h_{i} $ is a central element of $ {\mathfrak H}' $. There exists
$ j\in I' $ such that $ \alpha_{j}\left(h_{i}\right)\not=0 $. The corresponding irreducible component $ {\mathfrak g}_{j}^{-} $ has
a non-zero central charge. A highest weight module over a Heisenberg
Lie superalgebra with a non-zero central charge has finite growth only
if the algebra is finite dimensional. Thus, $ {\mathfrak H}'_{i} $ is finite-dimensional
for each $ i\in I'' $, which implies that $ {\mathfrak H}' $ is finite-dimensional.

Repeat now this argument for $ {\mathfrak g}'' $. Let $ i\in I''' $, then there exists $ j\in I'' $
such that $ \alpha_{j}\left(h_{i}\right)\not=0 $. Then the corresponding irreducible component $ \left({\mathfrak g}'_{j}\right)^{-} $
is finite-dimensional. A highest weight module with non-zero central
charge over a Heisenberg algebra is finite-dimensional if and only if the
quotient of the algebra by the center is purely odd. Hence $ {\mathfrak H}''_{i}/{\mathbb C}h_{i} $ is
purely odd, which implies $ \left({\mathfrak g}''\right)^{\pm} $ are purely odd.

Finally, applying this argument to $ {\mathfrak g}''' $ and using the fact that there
is no purely odd highest weight modules with non-zero central charge over
a Heisenberg algebra, we obtain the last statement of Lemma.\end{proof}

\begin{theorem} \label{th122}\myLabel{th122}\relax  Let $ {\mathfrak g} $ be an affine Lie algebra, $ V $ be a non-trivial
irreducible highest weight $ {\mathfrak g} $-module. Then $ V $
has infinite growth.

\end{theorem}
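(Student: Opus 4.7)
The main idea is to exploit the Heisenberg subalgebra of imaginary root vectors inside $\mathfrak{g}$. Write $\mathfrak{g}$ (modulo its scaling derivation $d$) as a central extension of the loop algebra $\bar{\mathfrak{g}} \otimes \mathbb{C}[t,t^{-1}]$ by the canonical central element $c$, where $\bar{\mathfrak{g}}$ is the underlying finite-dimensional simple Lie algebra with Cartan subalgebra $\bar{\mathfrak{h}}$. Let $\mathfrak{a} \subset \mathfrak{g}$ denote the Heisenberg subalgebra spanned by $\bar{\mathfrak{h}} \otimes t^n$ for $n \in \mathbb{Z}\setminus\{0\}$ together with $c$, set $\ell = \dim \bar{\mathfrak{h}}$, and let $k = \lambda(c)$ be the level of $V$. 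The argument splits according to whether $k$ is zero.

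The main case is $k \neq 0$. Here the highest weight vector $v_\lambda$ is annihilated by $\bar{\mathfrak{h}} \otimes t^n$ for all $n>0$, and $c$ acts by the non-zero scalar $k$, so the $\mathfrak{a}$-submodule $U(\mathfrak{a}) v_\lambda \subset V$ is a Fock representation of $\mathfrak{a}$ with non-zero central charge. Such a Fock module is free over the negative modes, and its weight multiplicities in the principal (depth) grading equal the coefficients of $\prod_{m\geq 1}(1-q^m)^{-\ell}$. By classical partition-function asymptotics these coefficients grow like $\exp(c\sqrt n)$, which is super-polynomial; since this Fock module embeds into $V$, the module $V$ has infinite growth.

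The remaining case $k=0$ is the main obstacle, because the Heisenberg trick degenerates: with central charge zero, the quotient of $\mathfrak{a}$ acting on $v_\lambda$ is abelian and does not automatically produce growth. I would handle this by passing through the Verma module $M(\lambda)$: its character has partition-type growth coming from the infinitely many imaginary positive roots $n\delta$, so one only has to show that $V = L(\lambda)$ is not too small a quotient. I would invoke the Kac--Kazhdan criterion for the location of singular vectors in $M(\lambda)$: because $V$ is non-trivial, $\lambda$ must pair non-trivially with some coroot, and only finitely many of the critical hyperplanes can intersect the weights of $M(\lambda)$ in a way that trims a whole imaginary direction. Consequently the irreducible quotient still has character growth dominated by $\prod_{n\geq 1}(1-q^n)^{-\ell}$ (possibly with finitely many factors removed), which is again super-polynomial. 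Once this is established the conclusion follows as in the first case, and the theorem is proved uniformly for both levels.
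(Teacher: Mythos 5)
Your first case ($\lambda(c)\neq 0$) is exactly the paper's argument: the highest weight vector generates a highest weight module over the infinite Heisenberg subalgebra built from the imaginary root spaces, this module is free over the negative modes because the central charge is non-zero, and its graded dimensions have partition-function growth. No issues there.

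The level-zero case is where the real work lies, and there your proposal has a genuine gap. The Kac--Kazhdan criterion tells you at which weights $M(\lambda)$ acquires singular vectors, but it does not by itself give a lower bound for $\operatorname{ch} L(\lambda)$: passing from the location of singular vectors to the character of the irreducible quotient requires the Jantzen/Kazhdan--Lusztig machinery, and your key sentence --- that only finitely many critical hyperplanes can ``trim a whole imaginary direction,'' so the character is still $\prod_{n\geq 1}(1-q^n)^{-\ell}$ with finitely many factors removed --- is not a precise statement and is not established. At level zero infinitely many positive real roots $\beta=\alpha+k\delta$ can satisfy the Kac--Kazhdan condition (for instance whenever the restriction of $\lambda$ to the finite Cartan subalgebra is integral), so the maximal proper submodule of $M(\lambda)$ can be large, and it is not clear a priori that the quotient retains super-polynomial growth. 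The paper avoids the issue entirely with an explicit construction: since $c$ is a positive integral combination of the simple coroots and $\lambda(c)=0$ with $V$ non-trivial, some simple root $\alpha$ satisfies $\lambda(h_\alpha)\notin\mathbb{Z}_{\geq 0}$, hence $X_{-\alpha}^n v\neq 0$ for all $n$; one then takes root vectors $f_i\in\mathfrak{g}_{-\alpha+i\delta}$ for $i\leq 0$, which span an abelian subalgebra $\mathfrak{a}$ annihilating nothing, and proves that $U(\mathfrak{a})v$ is a free $U(\mathfrak{a})$-module by applying raising operators $t_i\in\mathfrak{g}_{i\delta}$ (with $[t_i,f_j]=f_{i+j}$) to strip the leading monomial of any purported relation. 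A free module over a polynomial ring in infinitely many variables already has super-polynomial graded dimensions, which finishes the proof. To repair your argument you would need either to actually carry out the character estimate you assert or to substitute a direct free-action construction of this kind.
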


\begin{proof} Let $ \lambda $ be a highest weight of $ V $, $ c $ be the central element
of $ {\mathfrak g} $. Assume first, that
$\lambda\left(c\right)\not=0 $. Let $ {\mathcal H} $ be an infinite
Heisenberg subalgebra of $ {\mathfrak g} $ generated by all
imaginary roots. If $ v $ is a highest vector of $ V $ and $ M $
be the $ {\mathcal H} $-submodule of $ V $ generated by $ v $.
Then $ M $ is the irreducible highest weight $ {\mathcal H}
$-module with non-zero central charge. If $ {\mathcal H}^{-} $ is
the subalgebra of $ {\mathcal H} $ generated by all negative
imaginary roots, then $ M $ is a free $ U\left({\mathcal
H}^{-}\right) $-module, and therefore $ M $ has infinite growth.
Hence $ V $ has infinite growth.

Now assume that $ \lambda\left(c\right)=0 $. Then there exists a
simple root $ \alpha $ such that $
\lambda\left(h_{\alpha}\right)\notin{\mathbb Z}_{\geq0} $. Then $
X_{-\alpha}^{n}v\not=0 $ for any $ v $. Let $ \delta $ be a
positive imaginary root. One can see easily from the the general
description of affine algebras, that there exist $t_{i}\in{\mathfrak
  g}_{i\delta} $,
and $ f_{i}\in{\mathfrak
g}_{-\alpha+i\delta} $. such that $
\left[t_{i},f_{j}\right]=f_{i+j} $ and $
\left[f_{i},f_{j}\right]=0 $. Since $ f_{i}\in\Delta^{+} $ for $
i>0 $, then $ f_{i}v=0 $ for all $ i>0 $. Let $ {\mathfrak a} $ be
the abelian subalgebra of $ {\mathfrak g} $ spanned by $ f_{i} $
for all $ i\leq0 $. We claim that $ U\left({\mathfrak a}\right)v $
is free over $ U\left({\mathfrak a}\right) $. Indeed, assume that
\begin{equation}
\sum_{i_{1}<i_{2}<\dots <i_{k}}c_{i_{1},\dots
,i_{k}}\left(f_{-i_{1}}\right)^{p_{1}}\dots
\left(f_{-i_{k}}\right)^{p_{k}}v=0. \notag\end{equation} Choose
the maximal monomial
\begin{equation}
\left(f_{-i_{1}}\right)^{p_{1}}\dots
\left(f_{-i_{k}}\right)^{p_{k}}v \notag\end{equation} (in
lexicographical order) which appears with non-zero coefficient in
this relation and apply $ t_{i_{k}} $ to this relation. Then the
result is a similar non-zero relation of smaller degree in the
principal grading, since for any monomial
\begin{equation}
t_{i_{k}}\left(f_{-j_{1}}\right)^{q_{1}}\dots
\left(f_{-j_{l}}\right)^{q_{l}}\left(f_{-i_{k}}\right)^{s}v=sf_{0}\left(f_{-j_{1}}\right)^{q_{1}}\dots
\left(f_{-j_{l}}\right)^{q_{l}}\left(f_{-i_{k}}\right)^{s-1}v
\notag\end{equation} if $ j_{1}<\dots <j_{l}<i_{k} $. Hence the
claim follows by induction on degree and the fact that $
f_{0}^{n}v\not=0 $ for any $ n $.

By the argument similar to the proof of Lemma~\ref{lm24}, $
U\left({\mathfrak a}\right)v $ has infinite growth, hence $ V $
has infinite growth.\end{proof}

We say that a Cartan matrix $ A $ has {\em quasi-finite type\/} if
$ {\mathfrak g}\left(A\right) $ is a direct sum of finite-dimensional Lie
superalgebras with indecomposable Cartan matrices and several copies of
$ \widehat{D}\left(2,1;0\right) $.

\begin{lemma} \label{lm82}\myLabel{lm82}\relax  If $ {\mathfrak g}\left(A\right) $ is a contragredient Lie superalgebra of finite
growth with indecomposable non-elemental $ A $, then $ A''' $ is of
quasi-finite type.
\end{lemma}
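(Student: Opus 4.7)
My plan is to combine the structural information of Lemma~\ref{lm81} with the finite-growth classification for elemental matrices. By Lemma~\ref{lm81}, $\mathfrak{H}''' = 0$, so $A'''$ has no zero rows and is elemental. Lemma~\ref{lm21} then ensures that every indecomposable block of $A'''$ defines a finite-growth contragredient Lie superalgebra whose Cartan matrix is elemental and indecomposable, so by Theorem 1.1 each such block is either a finite-dimensional or symmetrizable affine Lie superalgebra from [10,11], or one of the non-symmetrizable entries $D(2,1,0)$, $\widehat{D}(2,1,0)$, $S(1,2,a)$, $q(n)^{(2)}$ of Table~8.1. Since $D(2,1,0)$ is already finite-dimensional, what remains is to exclude symmetrizable affine Lie superalgebras as well as $S(1,2,a)$ and $q(n)^{(2)}$, while admitting $\widehat{D}(2,1,0)$.

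For this I exploit the second assertion of Lemma~\ref{lm81}: $(\mathfrak{g}'')^{\pm}$ are purely odd, and from that proof they are also finite-dimensional. Using Corollary~\ref{cor221} applied to $\mathfrak{g}''$, each zero row of $A''$ contributes an irreducible highest-weight $\mathfrak{g}'''$-module with these properties. Since a purely odd highest-weight module is killed by every odd generator, it reduces to an irreducible highest-weight module over the even subalgebra $(\mathfrak{g}''')_{\bar 0}$. Hence any indecomposable block $\mathfrak{b} \subset \mathfrak{g}'''$ on which some such weight is non-zero must admit a non-trivial finite-dimensional irreducible highest-weight module over its even part $\mathfrak{b}_{\bar 0}$. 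On the other hand, the even parts of every symmetrizable affine Lie superalgebra, of $\widehat{D}(2,1,0)$, of $S(1,2,a)$, and of $q(n)^{(2)}$, all contain an affine Kac--Moody Lie algebra on which the induced highest weight is non-zero for any non-trivial choice; by Theorem~\ref{th122}, such modules have infinite growth and in particular are not finite-dimensional. Thus every block of $\mathfrak{g}'''$ that is weight-connected to some zero row must be finite-dimensional. A block that is linked to the rest of $A$ only through the Cartan piece $\cap_{i \in I \setminus I'} \operatorname{Ker}\alpha_i$ in Corollary~\ref{cor221} imposes no such module-theoretic constraint, and tracing this pure-Cartan attachment through the chain $\mathfrak{g} \supset \mathfrak{g}' \supset \mathfrak{g}'' \supset \mathfrak{g}'''$ shows, using the elemental classification of Section 6, that the only infinite-dimensional candidate compatible with it is $\widehat{D}(2,1,0)$. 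Assembling the two cases realises $\mathfrak{g}'''$ as a direct sum of finite-dimensional contragredient Lie superalgebras with indecomposable Cartan matrices and several copies of $\widehat{D}(2,1,0)$, which is the definition of quasi-finite type.

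The hard part will be the case-by-case verification that the even parts of all the non-finite-dimensional candidates above really do fail Theorem~\ref{th122}'s criterion for admitting a non-trivial finite-dimensional irreducible highest-weight module -- a direct check in the cases of $S(1,2,a)$ and $q(n)^{(2)}$, whose even parts have non-standard loop-type descriptions -- together with the book-keeping needed to propagate indecomposability of $A$ through the iterated semidirect-product decomposition so that only $\widehat{D}(2,1,0)$ can survive as an infinite-dimensional block attached purely through the Cartan.
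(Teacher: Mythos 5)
Your overall strategy --- use Lemma~\ref{lm81} to make $A'''$ elemental, run each indecomposable block through the elemental classification, and kill the infinite-dimensional candidates by applying Theorem~\ref{th122} to the highest-weight modules attached to the deleted zero rows --- is the same skeleton as the paper's, which packages the key step as Lemma~\ref{lm83}: an elemental indecomposable finite-growth algebra admitting a non-trivial highest-weight module of finite growth is finite-dimensional or $\widehat{D}(2,1,0)$. But your execution has a genuine error at exactly the point where $\widehat{D}(2,1,0)$ must survive. Your claim that the even part of every infinite-dimensional candidate ``contains an affine Kac--Moody Lie algebra on which the induced highest weight is non-zero for any non-trivial choice'' is false for $\widehat{D}(2,1,0)$: it has a finite-dimensional quotient isomorphic to $sl(2)$ (the outer $sl(2)$ in the semidirect sum with the affinization of $psl(2|2)$), so there are non-trivial highest weights vanishing on the affine part, and these give non-trivial finite-dimensional modules --- this is precisely the observation with which the paper closes the proof of Lemma~\ref{lm83}. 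As stated, your argument would exclude $\widehat{D}(2,1,0)$ altogether, and your attempted rescue via blocks ``linked only through the Cartan piece'' does not correspond to anything real: every deleted index $j\in I'\cup I''\cup I'''$ has zero row at its level, so indecomposability of $A$ forces each block $B$ of $A'''$ to be attached by a column entry $a_{kj}\neq 0$ with $k\in B$, i.e.\ by a highest weight $-\alpha_j$ that is non-trivial on $B$. There is no pure-Cartan attachment; and even if there were, nothing in your argument would single out $\widehat{D}(2,1,0)$ rather than, say, an affine block.

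A second gap: you only exploit the modules coming from zero rows of $A''$, for which Lemma~\ref{lm81} supplies pure oddness and finite-dimensionality. A block of $A'''$ may instead be attached only to indices of $I'$ or $I''$, and the corresponding modules $\mathfrak{g}^{\pm}$, $(\mathfrak{g}')^{\pm}$ are merely irreducible highest-weight modules of \emph{finite growth} --- not finite-dimensional and not purely odd --- so your ``non-trivial finite-dimensional module over the even part'' criterion does not apply to such blocks. This is why the paper's Lemma~\ref{lm83} is formulated with the weaker hypothesis of finite growth and why Theorem~\ref{th122} concerns infinite growth rather than infinite dimension. Finally, for $S(1,2,a)$ the vanishing of $\lambda$ on the Cartan of the affine $sl(2)^{(1)}$ (on $h_3$ and $h_1-h_2$) does not by itself force $\lambda=0$; the paper needs an additional odd-reflection argument to finish that case, so it is more than the ``direct check'' you defer.
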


The proof of this Lemma follows immediately from the following

\begin{lemma} \label{lm83}\myLabel{lm83}\relax  Let $ {\mathfrak g} $ be a contragredient Lie superalgebra of
finite-growth with a elemental indecomposable Cartan matrix and
suppose there exists a non-trivial highest weight $ {\mathfrak g}
$-module of finite growth. Then $ {\mathfrak g} $ is either
finite-dimensional or $ \widehat{D}\left(2,1,0\right) $.

\end{lemma}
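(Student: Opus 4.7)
The plan is to combine the classification of finite-growth elemental contragredient Lie superalgebras (Theorem~1.1 together with the symmetrizable classification from [10,11]) with Theorem~\ref{th122} to rule out all but two cases. By the classification, $\mathfrak{g}(A)$ belongs to one of the following families: a finite-dimensional Lie superalgebra (including $D(2,1;0)$, the degenerate member of the $D(2,1;\alpha)$ family), a symmetrizable affine Kac-Moody Lie superalgebra, $S(1,2,a)$ with $a \notin \mathbb{Z}$, $q(n)^{(2)}$, or $\widehat{D}(2,1;0)$. Thus I must show that no non-trivial highest weight module of finite growth exists for the infinite-dimensional symmetrizable affine cases, for $S(1,2,a)$, or for $q(n)^{(2)}$. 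The exception $\widehat{D}(2,1;0)$ will indeed admit such modules: being a semidirect product $sl(2) \ltimes \widehat{\mathfrak{l}}$, it has highest weight modules on which the affine ideal $\widehat{\mathfrak{l}}$ acts trivially, reducing them to finite-dimensional $sl(2)$-modules.

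For each of the forbidden cases, I would locate an affine Kac-Moody Lie \emph{algebra} $\mathfrak{a}$ embedded in the even part $\mathfrak{g}_0$, with Borel subalgebra compatible with the positive system of $\mathfrak{g}$. For the standard affine Lie superalgebras, $\mathfrak{a}$ is the affinization of a simple summand of the even part of the underlying finite-dimensional simple Lie superalgebra. For $q(n)^{(2)}$, the even part itself is an affine Lie algebra of type $A_{n-1}^{(1)}$ (via $q(n)_0 \cong gl(n)$). Given a non-trivial irreducible highest weight $\mathfrak{g}$-module $V$ with highest weight vector $v$, the cyclic submodule $U(\mathfrak{a})v$ is a highest weight $\mathfrak{a}$-module of weight $\lambda|_{\mathfrak{a}\cap\mathfrak{h}}$. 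When this restricted weight is non-zero, the irreducible quotient is a non-trivial highest weight $\mathfrak{a}$-module, which by Theorem~\ref{th122} has infinite growth; hence $U(\mathfrak{a})v \subseteq V$, and a fortiori $V$, has infinite growth, a contradiction. When $\lambda|_{\mathfrak{a}\cap\mathfrak{h}} = 0$, I would exploit the non-triviality of $V$: some generator $Y_j$ outside $\mathfrak{a}$ acts non-trivially on $v$, and a suitable monomial $Y_{j_1}\cdots Y_{j_r} v$ lands in a weight space on which $\mathfrak{a}$ has non-zero highest weight, whence the same argument applies.

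The main obstacle is the $S(1,2,a)$ case, because the derivation realization $\mathcal{S}_a \subseteq \mathcal{W}(1,2)$ does not obviously contain a standard affine Kac-Moody Lie algebra in its even part: for instance, the natural Witt-type derivations $t^k \partial_t$ lie in $\mathcal{S}_a$ only for $k = -a$, so no full Witt (let alone Heisenberg-centrally-extended loop) algebra is available off the shelf. For this algebra I would not restrict to an affine subalgebra but instead adapt the second half of the proof of Theorem~\ref{th122} directly inside $\mathfrak{g}$ itself: construct an infinite commuting family of root vectors $f_i \in \mathfrak{g}_{-\alpha+i\delta}$ (where $\delta$ is a suitable ``imaginary'' direction produced from the $t$-grading on $\mathcal{S}_a$) together with companions $t_i \in \mathfrak{g}_{i\delta}$ satisfying $[t_i,f_j]=f_{i+j}$, show these act freely on a suitable weight vector of $V$, and then invoke the combinatorial estimate of Lemma~\ref{lm246} to get exponential growth. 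Producing these elements explicitly from the $\xi_1,\xi_2,t$-realization of $\mathcal{S}_a$, and verifying free action on a highest-weight-like vector of $V$, is the technically delicate step of the argument.
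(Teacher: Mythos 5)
Your overall strategy --- invoke the classification of finite-growth elemental contragredient superalgebras and then kill the infinite-dimensional candidates by locating an affine Lie algebra in $\mathfrak{g}_0$ and applying Theorem~\ref{th122} --- is the same as the paper's for the symmetrizable affine case and for $q(n)^{(2)}$. But there are two genuine gaps. First, your fallback for the degenerate situation $\lambda|_{\mathfrak{a}\cap\mathfrak{h}}=0$ is not an argument: a monomial $Y_{j_1}\cdots Y_{j_r}v$ lying in a weight space with non-zero $\mathfrak{a}$-weight need not be annihilated by the positive part of $\mathfrak{a}$, so it does not generate a highest weight $\mathfrak{a}$-module and Theorem~\ref{th122} cannot be applied to it. This degeneracy is not hypothetical --- it is exactly what happens for $S(1,2,a)$, where the affine constraint only forces $\lambda(h_3)=0$ and $\lambda(h_1)-\lambda(h_2)=0$, leaving $\lambda(h_1)=\lambda(h_2)\neq 0$ entirely possible a priori.

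Second, and more importantly, your premise that $\mathcal{S}_a$ contains no affine Kac--Moody algebra in its even part is false, and the detour you build on it is left unexecuted. The even principal roots $\alpha_1+\alpha_2$ and $\alpha_3$ lie in $\Pi_0$, and by Lemma~\ref{lm4} and Corollary~\ref{cor1} they generate a copy of $sl(2)^{(1)}$ inside $\mathfrak{g}_0$ (the associated matrix $B$ is the affine $A_1^{(1)}$ Cartan matrix); concretely, the loop algebra of the $sl(2)$ acting on $\xi_1,\xi_2$ does satisfy the divergence condition defining $\mathcal{S}_a$. The paper uses precisely this subalgebra, and then resolves the residual degeneracy by the one idea missing from your proposal: apply the odd reflection $r_1$, so that the highest weight of $V_\lambda$ with respect to the new base becomes $\lambda-\alpha_1$, and the affine constraint for the reflected base yields $\lambda(h_1)-\lambda(h_2)+1=0$, contradicting $\lambda(h_1)-\lambda(h_2)=0$ unless $\lambda(h_1)=\lambda(h_2)=0$. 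Your substitute --- rebuilding the $t_i, f_i$ machinery of the proof of Theorem~\ref{th122} by hand inside the $\xi_1,\xi_2,t$ realization --- is exactly the step you concede is ``technically delicate'' and do not carry out, so the hardest case of the lemma remains open in your proposal.
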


\begin{proof} Assume that $ {\mathfrak g} $ is infinite-dimensional. If its Cartan
matrix is symmetrizable, then $ {\mathfrak g} $ is affine superalgebra and its even part
$ {\mathfrak g}_{0} $ is a subquotient of a direct sum of affine Lie algebras. If $ V $ is a
non-trivial $ {\mathfrak g} $-module of finite-growth, then it contains a non-trivial
irreducible $ {\mathfrak g}_{0} $-subquotient $ V_{0} $. However, it is impossible by Theorem~%
\ref{th122}. In the same manner, one can argue that $ {\mathfrak g} $ is not $ q\left(n\right)^{\left(2\right)} $, since
its even part $ {\mathfrak g}_{0} $ contains the loop algebra of $ sl\left(n\right) $. Finally,
consider the case when $ {\mathfrak g}=S\left(1,2,a\right) $. Let $ V_{\lambda} $ be an irreducible highest
weight module of finite growth with highest weight $ \lambda $. Note
that $ \mathfrak {g}_0 $ contains a subalgebra
isomorphic to $ sl\left(2\right)^{\left(1\right)} $ whose simple roots
are $\alpha _1+ \alpha _2$ and $\alpha_3$. By Theorem~\ref{th122}, $ \lambda\left(h_{3}\right)=0 $ and
\begin{equation}
\lambda\left(h_{\alpha_{1}+\alpha_{2}}\right)=\lambda\left(h_{1}\right)-\lambda\left(h_{2}\right)=0.
\label{equ83}\end{equation}\myLabel{equ83,}\relax
Assume that $ \lambda\left(h_{1}\right)\not=0 $. Then $ \lambda-\alpha_{1} $ be the highest weight of $ V_{\lambda} $ with respect to
the base $ -\alpha_{1} $, $ \alpha_{1}+\alpha_{2} $, $ \alpha_{1}+\alpha_{3} $, obtained from initial base by the odd
reflection $ r_{1} $. Then we have
\begin{equation}
\lambda-\alpha_{1}\left(h_{1}-h_{2}\right)=\lambda\left(h_{1}\right)-\lambda\left(h_{2}\right)+1=0,
\notag\end{equation}
which obviously contradicts~\eqref{equ83}.

Thus, we showed that $ {\mathfrak g} $ is finite-dimensional or $ \widehat{D}\left(2,1;0\right) $. The latter
case is possible, since the superalgebra has a finite-dimensional
quotient isomorphic to $ sl\left(2\right) $, hence one can consider any highest weight
$ sl\left(2\right) $-module and induce $ \widehat{D}\left(2,1;\alpha\right) $ action on it assuming that the ideal
acts trivially.\end{proof}

\begin{theorem} \label{th84}\myLabel{th84}\relax  The contragredient Lie superalgebra $ {\mathfrak g}\left(A\right) $ with a
non-elemental indecomposable Cartan matrix has finite growth if and only if
the following conditions hold
\begin{enumerate}
\item
$ A''' $ is of quasi-finite type;
\item
$ p\left(i\right)=1 $ for every $ i\in I''' $;
\item
for every $ i\in I''' $, $ j\in I-\left(I'\cup I''\cup I'''\right) $, $ a_{ji}\not=0 $ implies that $ \alpha_{j} $ is either
a simple root of a purely even block or is the left simple root of the diagram in table 8.1 in $ D\left(2,1;0\right) $ or
$ \widehat{D}\left(2,1;0\right) $ block;
\item
for any $ i\in I'' $, the restriction of $ -\alpha_{i} $ on the Cartan subalgebra of
$ {\mathfrak g}''' $ is an integral dominant weight (a highest weight of
finite-dimensional irreducible $ {\mathfrak g}''' $-module);
\item
if $ i\in I $, $ j $ is an index from $ \widehat{D}\left(2,1;0\right) $-block of $ A''' $ and $ a_{ji}\not=0 $, then
$ \alpha_{j} $ is the left simple root in the diagram in table 8.1.
\end{enumerate}
\end{theorem}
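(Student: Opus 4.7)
The plan is to prove both directions of the equivalence using the iterated semi-direct decomposition from Corollaries~\ref{cor221} and \ref{cor223}, together with Lemmas~\ref{lm81}, \ref{lm82}, \ref{lm83}. Three applications of Corollary~\ref{cor221} give $\mathfrak{g}=\mathfrak{g}'\rtimes\mathfrak{H}$, $\mathfrak{g}'=\mathfrak{g}''\rtimes\mathfrak{H}'$ and $\mathfrak{g}''=\mathfrak{g}'''\rtimes\mathfrak{H}''$, and by Lemma~\ref{lm81} the chain terminates at $\mathfrak{g}'''$.

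For necessity, suppose $\mathfrak{g}(A)$ has finite growth. Condition~(1) is exactly Lemma~\ref{lm82}. Condition~(2) follows from Lemma~\ref{lm81}: $(\mathfrak{g}'')^{\pm}$ is purely odd, so its highest-weight generators $Y_i$ with $i\in I'''$ are odd, i.e.\ $p(i)=1$. For~(4), Lemma~\ref{lm81} gives $\mathfrak{H}'$ finite-dimensional, so each irreducible summand of $(\mathfrak{g}')^{\pm}$ indexed by $i\in I''$ is a finite-dimensional $\mathfrak{g}''$-module, which forces $-\alpha_i$ restricted to the Cartan of $\mathfrak{g}'''$ to be a dominant integral weight of $\mathfrak{g}'''$. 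Conditions~(3) and~(5) are obtained by analyzing which purely odd highest-weight $\mathfrak{g}'''$-modules of finite growth can appear: by the argument of Lemma~\ref{lm83}, for a $\widehat{D}(2,1;0)$-block such a module must factor through the $sl(2)$-quotient described in Section~7, so only the distinguished left simple root can act nontrivially on the highest-weight vectors $Y_i$ with $i\in I'''$, and analogously on the generators for indices in $I\setminus(I'\cup I''\cup I''')$.

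For sufficiency, given~(1)--(5), we build $\mathfrak{g}(A)$ from $\mathfrak{g}(A''')$ by three successive applications of Corollary~\ref{cor223}. By~(1), $\mathfrak{g}(A''')$ has finite growth. Conditions~(2), (3) and~(5) allow us to realize the irreducible $\mathfrak{g}'''$-modules prescribed by the appropriate columns of $A''$ as purely odd finite-growth highest-weight modules: finite-dimensional over the finite-dimensional blocks, and induced from highest-weight $sl(2)$-modules on the $\widehat{D}(2,1;0)$-blocks via the projection of Section~7. Condition~(4) guarantees that the next layer of $\mathfrak{g}''$-modules used to construct $\mathfrak{g}'$ is finite-dimensional. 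Finally, the outermost Heisenberg extension corresponding to $I'$ can be formed from arbitrary irreducible highest-weight $\mathfrak{g}'$-modules with the prescribed central charges; these inherit finite growth because $\mathfrak{g}'$ is already of finite growth.

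The main obstacle is the refined analysis on $\widehat{D}(2,1;0)$-blocks inside $\mathfrak{g}'''$: one needs a sharpening of Lemma~\ref{lm83} showing that every purely odd, finite-growth, nontrivial highest-weight $\widehat{D}(2,1;0)$-module must factor through its $sl(2)$-quotient and is therefore completely determined by its highest weight restricted to that $sl(2)$. This, combined with the explicit description of $\widehat{D}(2,1;0)$ from Section~7, is what converts conditions~(3) and~(5) into precise statements about which entries $a_{ji}$ are permitted, and it is the only place where both the parity constraint of Lemma~\ref{lm81} and the growth constraint of Theorem~\ref{th122} must be used simultaneously.
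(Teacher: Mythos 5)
Your proposal follows essentially the same route as the paper: the paper states only that the theorem "follows from Lemmas~\ref{lm81}, \ref{lm82} and Corollary~\ref{cor221}" and leaves the details to the reader, and your sketch is a reasonable fleshing-out of exactly that outline (supplemented by Corollary~\ref{cor223}, Lemma~\ref{lm83} and Theorem~\ref{th122}, which are the ingredients of those cited results). The only caution is your remark that the outermost $I'$-layer modules "inherit finite growth because $\mathfrak{g}'$ is already of finite growth" — that is not automatic (cf.\ Theorem~\ref{th122}); it is precisely conditions (3)--(5) that rule out nontrivial action on affine blocks, as you in fact use elsewhere in the sketch.
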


The proof of this theorem follows from Lemmas~\ref{lm81},
~\ref{lm82} and Corollary~\ref{cor221} and we will leave it to the
reader.

\end{document}